\documentclass[12pt, letterpaper]{amsart}
\usepackage[left=1in,right=1in,bottom=0.5in,top=0.8in]{geometry}
\usepackage{amsfonts}
\usepackage{amsmath, amssymb}
\usepackage{graphicx}
\usepackage[font=small,labelfont=bf]{caption}
\usepackage{epstopdf}
\usepackage{xcolor}
\usepackage{amsthm}
\usepackage{float}
\usepackage{pgfplots}
\usepackage{listings}
\usepackage{longtable}
\usepackage{mathrsfs}
\usepackage{bbold}
\usepackage{comment}
\usepackage{esint}

\usepackage{mathtools}
\usepackage{scalerel}[2014/03/10]
\usepackage[usestackEOL]{stackengine}
\usepackage[utf8]{inputenc}
\usepackage[T1]{fontenc}
\usepackage{enumitem}
\DeclarePairedDelimiterX\set[1]\lbrace\rbrace{#1}


\usepackage{scalerel}[2014/03/10]
\usepackage[usestackEOL]{stackengine}

\usepackage[backref=page]{hyperref}
\hypersetup{
plainpages=false,
colorlinks,
linkcolor={cyan!90!black},
citecolor={magenta},
urlcolor={red!90!black},
bookmarksdepth=2,}

\hypersetup{
pdftitle={Degenerate parabolic equations in divergence form: fundamental solution and Gaussian bounds},
pdfsubject={Mathematics, PDE, Analysis},
pdfauthor={Khalid Baadi},
pdfkeywords={Parabolic systems, Cauchy problems, Green operators, variational methods, Gaussian estimates, Heat Kernel, Energy equality.}
}

\newtheorem{thm}{Theorem}[section]
\newtheorem{cor}[thm]{Corollary}
\newtheorem{prop}[thm]{Proposition}
\newtheorem{lem}[thm]{Lemma}

\theoremstyle{definition}
\newtheorem{defn}[thm]{Definition}

\theoremstyle{remark}
\newtheorem{rem}[thm]{Remark}

\definecolor{energy}{RGB}{114,0,172}
\definecolor{freq}{RGB}{45,177,93}
\definecolor{spin}{RGB}{251,0,29}
\definecolor{signal}{RGB}{203,23,206}
\definecolor{circle}{RGB}{217,86,16}
\definecolor{average}{RGB}{203,23,206}

\definecolor{kb}{rgb}   {.6, 0, 0}

\newcommand{\llangle}{\langle\!\langle}
\newcommand{\rrangle}{\rangle\!\rangle}

\colorlet{shadecolor}{gray!20}
\pgfplotsset{compat=1.9}

\usepgflibrary{fpu}
\makeatletter
\let\c@equation\c@thm
\raggedbottom
\makeatother
\numberwithin{equation}{section}

\author{Khalid Baadi}
\address{Université Paris-Saclay, CNRS, Laboratoire de Mathématiques d’Orsay, 91405 Orsay, France}
\email{khalid.baadi@universite-paris-saclay.fr}
\keywords{Degenerate parabolic equations, Muckenhoupt weights, Cauchy problems, fundamental solution, Gaussian bounds, heat kernel.}
\date{March 21, 2026}
\subjclass[2010]{Primary: 35A08, 35K65, 35K15 Secondary: 35K08, 35K10.}
\title[fundamental solution and Gaussian bounds]{Degenerate parabolic equations in divergence form: fundamental solution and Gaussian bounds}


\DeclareMathOperator*{\supess}{ess\,sup}

\begin{document}

\begin{abstract}
In this paper, we consider second order degenerate parabolic equations with complex, measurable, and time-dependent coefficients. The degenerate ellipticity is dictated by a spatial $A_2$-weight. We prove that having a generalized fundamental solution with upper Gaussian bounds is equivalent to Moser’s $L^2$-$L^\infty$ estimates for local weak solutions. In the special case of real coefficients, Moser's $L^2$-$L^\infty$ estimates are known, which provide an easier proof of Gaussian upper bounds, and a known Harnack inequality is then used to derive Gaussian lower bounds.
\end{abstract}
\maketitle

\tableofcontents

\section{Introduction}\label{section 1}
In this paper, we study parabolic operators of the form 
\begin{equation}\label{H}
    \mathcal{H}u:=  \partial_t u -\omega^{-1} \mathrm{div}_x(A(t,\cdot)\nabla_x u), \ (t,x)\in \mathbb{R}\times \mathbb{R}^n
\end{equation}
where $A=A(t,x)$ is a  matrix-valued function with complex measurable coefficients and the weight $\omega=\omega(x)$ is time-independent and belongs to the spatial Muckenhoupt class $A_2(\mathbb{R}^n,\mathrm{d}x)$. Degeneracy is dictated by the weight $\omega$ in the sense that $\omega^{-1}A$ satisfies the classical uniform ellipticity condition. This operator is defined in Section \ref{section 2}, as well as the meaning of the notation.

Weighted parabolic operators, such as those in \eqref{H}, appear naturally in the analysis of fractional powers of parabolic equations and anomalous diffusion, as shown in \cite{litsgaard2023local} and its references, as well as in the study of heat kernels for Schrödinger equations with singular potentials, as explored in \cite{ishige2017heat}.

The principal purpose of this paper is to establish the following theorem. Rigorous definitions are in Sections \ref{section 2}, \ref{section 3} and \ref{section 4}.

\begin{thm}\label{théorème principal}
The operator $\mathcal{H}=\partial_t  - \omega^{-1}\mathrm{div}_x (A(t,\cdot) \nabla_x )$ has a unique fundamental solution $\Gamma=(\Gamma(t,s))_{t,s \in \mathbb{R}}$. Moreover, the following properties hold.
\begin{enumerate}
    \item The operators $\Gamma(t,s)$, for all $t>s$, have kernels $\Gamma(t,x;s,y)$ with almost everywhere pointwise Gaussian upper bound 
    \begin{equation}\label{BorneGaussienne}
        \left | \Gamma(t,x;s,y) \right | \leq   \frac{K_0}{\sqrt{\omega( B(x,\sqrt{t-s})  )} \, \sqrt{\omega ( B(y,\sqrt{t-s}) )}}  e^{-k_0 \frac{\left | x-y \right |^2}{t-s}},
    \end{equation}
    if and only if all local weak solutions of $\mathcal{H}u=0$ and $\mathcal{H}^\star v=0$ satisfy Moser’s $L^2$-$L^\infty$ estimates. The function $\Gamma(t,x;s,y)$ is called a generalized fundamental solution.
    \item If $A$ has real-valued coefficients, then the last condition of the equivalence in (1) is always satisfied, hence \eqref{BorneGaussienne} holds for some constants $K_0 > 0$ and $k_0 > 0$, depending only on the structural constants. Furthermore, there exist constants $\Tilde{K}_0>0$ and $\Tilde{k}_0>0$, depending only on the structural constants, such that the following lower bound holds:
    $$ \frac{\Tilde{K}_0}{\sqrt{\omega( B(x,\sqrt{t-s})  )} \, \sqrt{\omega ( B(y,\sqrt{t-s}) )}}  e^{-\Tilde{k}_0 \frac{\left | x-y \right |^2}{t-s}} \leq  \Gamma(t,x;s,y), $$
    for all $t>s$ and $(x,y)\in \mathbb{R}^n$.
\end{enumerate}
\end{thm}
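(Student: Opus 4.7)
The plan is to build the propagator by a form method, and then to pass from Moser's $L^2$--$L^\infty$ estimates to Gaussian kernel bounds by Davies's exponential perturbation trick.

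First, I would introduce the sesquilinear form
\[
\mathfrak{a}_t(u,v) := \int_{\mathbb{R}^n} A(t,x)\nabla u \cdot \overline{\nabla v}\, \mathrm{d}x,
\]
on the weighted Sobolev space naturally associated with $\omega$. Because $\omega^{-1}A$ is uniformly elliptic and $\omega \in A_2(\mathbb{R}^n, \mathrm{d}x)$, this form is bounded and accretive on the Gelfand triple $(H^1(\omega), L^2(\omega), (H^1(\omega))')$, so Lions' theorem on non-autonomous Cauchy problems produces a family $(\Gamma(t,s))_{t \geq s}$ of bounded operators on $L^2(\omega)$ satisfying the propagator identities and solving $\mathcal{H}u=0$ in the weak sense. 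Uniqueness follows from an energy estimate and the accretivity of the form.

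For the equivalence in (1), the direction from Gaussian bounds to Moser estimates is straightforward: the kernel representation combined with Cauchy--Schwarz in the weighted norm, doubling of $\omega$, and standard parabolic covering arguments produces an $L^2$--$L^\infty$ bound for kernel-represented solutions, and a localization/density argument extends this to all local weak solutions of $\mathcal{H}u = 0$ and $\mathcal{H}^\star v = 0$. For the converse, I would first apply Moser's estimate to both $\mathcal{H}$ and $\mathcal{H}^\star$ to extract an on-diagonal kernel bound
\[
|\Gamma(t,x;s,y)| \lesssim \frac{1}{\sqrt{\omega(B(x,\sqrt{t-s}))}\,\sqrt{\omega(B(y,\sqrt{t-s}))}}.
\]
Then, for a bounded Lipschitz $\varphi$ with $|\nabla \varphi| \leq 1$ and $\rho \in \mathbb{R}$, I would introduce the conjugated form $\mathfrak{a}_t^\rho(u,v) = \mathfrak{a}_t(e^{-\rho\varphi}u, e^{\rho\varphi}v)$. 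This produces a perturbed propagator $\Gamma^\rho(t,s) = e^{\rho\varphi}\Gamma(t,s)e^{-\rho\varphi}$ whose $L^2(\omega)$--$L^2(\omega)$ operator norm is controlled by $e^{C\rho^2 (t-s)}$. Running Moser's estimate for the perturbed equation, which shares the same structural assumptions up to a lower-order term of size $\rho^2$, yields an analogous bound for $\Gamma^\rho(t,s)$; choosing $\varphi$ close to a normalized distance to $y$ and optimizing $\rho$ converts $e^{-\rho|x-y| + C\rho^2(t-s)}$ into the Gaussian factor $e^{-k_0|x-y|^2/(t-s)}$.

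The main obstacle I anticipate is ensuring that Moser's $L^2$--$L^\infty$ estimate transfers to the Davies-conjugated form in the weighted setting: the perturbation introduces zeroth-order and drift terms that must be absorbed using weighted Poincaré and Sobolev inequalities à la Fabes--Kenig--Serapioni, and one must keep track of the precise polynomial dependence of the constants on $\rho$, which ultimately governs the Gaussian exponent.

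For part (2), Moser's $L^2$--$L^\infty$ estimate for real coefficients with $A_2$-degeneracy is classical (Chiarenza--Serapioni, Gutiérrez--Wheeden), so the upper Gaussian bound follows from part (1). For the lower bound, I would combine an on-diagonal lower estimate $\Gamma(t,x;s,x) \gtrsim \omega(B(x,\sqrt{t-s}))^{-1}$, obtained from conservation of mass $\int \Gamma(t,x;s,y)\,\omega(y)\,\mathrm{d}y = 1$ together with the upper Gaussian tail, with the parabolic Harnack inequality for real $A_2$-degenerate operators. Iterating Harnack along a parabolic chain of balls connecting the space-time points and tracking the number of steps converts the iteration count into the claimed Gaussian factor $e^{-\tilde{k}_0|x-y|^2/(t-s)}$.
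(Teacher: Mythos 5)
Your overall architecture is reasonable, but both directions of the equivalence in (1) contain genuine gaps. First, the direction ``Gaussian bounds $\Rightarrow$ Moser'' is not straightforward, and ``a localization/density argument'' is precisely where the work lies: a local weak solution on $Q_{2R}(t_0,x_0)$ admits no kernel representation a priori. One must set $v=\zeta u$ for a cutoff $\zeta$, observe that $v$ solves a Cauchy problem with the three sources $(\partial_t\zeta)u$, $-\omega^{-1}A\nabla_x u\cdot\nabla_x\zeta$ and $-\omega^{-1}\mathrm{div}_x(Au\nabla_x\zeta)$ supported in an annulus, and estimate each Duhamel term. The first two need a Caccioppoli inequality for $u$; the divergence-form source cannot be hit directly with the kernel and must be dualized, which requires a Caccioppoli-type estimate for $\nabla_x\tilde{\Gamma}(\cdot,t)\phi$ (itself resting on the $L^2$ off-diagonal decay of the propagator) before the Gaussian decay of the kernel over the annulus can be integrated. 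None of this is a density argument, and without it the ``only if'' direction is unproved.

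Second, your Davies conjugation for ``Moser $\Rightarrow$ Gaussian'' runs Moser's estimate \emph{for the perturbed equation}, but the hypothesis of the theorem grants Moser only for $\mathcal{H}$ and $\mathcal{H}^\star$, not for the conjugated operators with drift and zeroth-order terms of size $\rho$ and $\rho^2$; the obstacle you flag at the end is not a technicality to be absorbed but a missing hypothesis, and assuming it would weaken the equivalence. The correct route (Hofmann--Kim, which the paper follows) applies Moser only to the unperturbed solution $U=\Gamma(t,s)e^{-\psi}f$ of $\mathcal{H}U=0$, carries the weight $e^{\psi}$ entirely through the twisted $L^2$ energy/Gr\"onwall estimate $\|e^{\psi}\Gamma(t,s)e^{-\psi}\|_{\mathcal{L}(L^2_\omega)}\le e^{C\gamma^2(t-s)}$, upgrades to weighted $L^1\to L^\infty$ by duality and the Chapman--Kolmogorov identity, extracts the kernel by Dunford--Pettis, and only then optimizes in $\gamma$. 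For part (2), your lower-bound strategy (on-diagonal bound from conservation of mass plus the upper tail, then Harnack chaining) is a legitimate variant of the paper's argument, which instead gets the integral lower bound by applying the Harnack chain to the adjoint solution $\tilde{\Gamma}(\cdot,t)\mathbb{1}_{B}$ extended by $1$ for times beyond $t$; either works once the Harnack inequality for $A_2$-degenerate parabolic equations is in hand. Note also that positivity of $\Gamma(t,s)$ for real coefficients, which you use implicitly, itself requires an approximation argument (time-mollification of $A$ and Kato's theorem) since the propagator is only defined variationally.
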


The existence and uniqueness of the fundamental solution family, proved in \cite{auscher2024fundamental}, are stated in Theorem \ref{thm:FundSol}, along with additional properties. The necessary and sufficient conditions of (1) are proven in Propositions \ref{ThmConverseHK} and \ref{ThmHkIM}, respectively. Finally, the special case of real-valued coefficients (2) is addressed in Section \ref{section 5}.

Before delving into the details, it is useful to provide a brief survey of the results that have been obtained so far on this topic of fundamental solution of second order parabolic operators with Gaussian bounds, starting with the unweighted case, \textit{i.e.} $\omega = 1$. When the coefficients are regular, several methods exist for constructing the fundamental solution. The most effective technique combines a parametrix with the freezing point method \cite{friedman2008partial}. This method simplifies the problem by making the coefficients effectively independent of space, leading to explicit solutions represented by smooth kernels $\Gamma(t, x, s, y)$ with Gaussian decay. The treatment of the case of real, merely measurable coefficients was systematically addressed by Aronson \cite{aronson1967bounds, aronson1968non}. He constructed generalized fundamental solutions and proved upper and lower bounds relying on the regularity properties of local weak solutions established by Nash \cite{nash1958continuity} and its extensions, alongside taking limits from operators with regular coefficients. In the case of complex coefficients, Auscher \cite{auscher1996regularity} established Gaussian upper bounds estimates for the fundamental solutions of parabolic equations with complex coefficients, provided that these coefficients are time-independent and are small perturbations of real coefficients. Later, Hofmann and Kim \cite{hofmann2004gaussian} proved the equivalence between Moser’s $L^2$-$L^\infty$ estimates for solutions to parabolic systems and Gaussian upper bounds for the fundamental solution, extending Aronson result to include time-dependent coefficients and even systems. 

In the weighted case ($\omega \neq 1$), a work by Cruz-Uribe and Rios \cite{cruz2014corrigendum} establishes the existence of the fundamental solution when $A$ is real-valued, symmetric and independent of $t$. It is given using the semi-group $(e^{-t\mathcal{L}})_{t>0}$, with $\mathcal{L}=-\omega^{-1} \mathrm{div}_x(A(x)\nabla_x )$ and then exploit a Harnack inequality to derive Gaussian bounds. In a recent work, Ataei and Nystr\"om \cite{ataei2024fundamental} extended this result to real-valued and time-dependent coefficients using an approximation argument based on Kato's work \cite{kato1961abstract}, along with the H\"older continuity of weak solutions (\textit{i.e.}, Nash's result for degenerate equations) to construct the fundamental solution and prove pointwise Gaussian upper bounds.

In this paper, having at hand the existence of a fundamental solution family no matter what $A$ is, we can generalize \cite{hofmann2004gaussian} to weighted parabolic equations without assuming smoothness. That is, we prove that having upper bounds of the form \eqref{BorneGaussienne} is equivalent to Moser’s $L^2$-$L^\infty$ for local weak solutions of $\mathcal{H}$ and $\mathcal{H}^\star$. When the coefficients are real, this provides a direct approach, even extending the result of \cite{ataei2024fundamental}, for Gaussian upper bounds, and the argument does not require the use of Nash's regularity result. The latter is indeed valid, and we use it to derive Gaussian lower bounds. For further details, we refer the reader to the main text.

\subsubsection*{\textbf{Notation}}
Throughout this paper, we adopt the following notation.
\begin{enumerate}[label=$\blacklozenge$]
\item We fix an integer $n \ge 1$.
\item For any $(t,x) \in \mathbb{R} \times \mathbb{R}^n$ and $r>0$, we set $Q_r(t,x):=(t-r^2,t]\times B(x,r)$ and $Q_r^\star(t,x):=[t,t+r^2)\times B(x,r)$ where $B(x,r)$ is the Euclidean ball of radius $r$ and center $x$. Thus, $Q_r(t,x)$ and $Q_r^\star(t,x)$ denote the usual forward and backward in time parabolic cylinders.
\item For any $x \in \mathbb{R}^n$ and $r>0$, we set $\omega_r(x):=\omega(B(x,\sqrt{r})):=\int_{B(x,\sqrt{r})}\omega(y) \mathrm{d}y$.
\item We use the notation $\mathcal{D}(\Omega)$ for the space of smooth ($C^\infty$) and compactly supported test functions on an open set $\Omega$. Variables will be indicated at the time of use.
\item We use the sans-serif font "loc" when the prescribed property holds on all compact subsets of the prescribed set.
\item By convention, the notation $C = C(a,b,\dots)$ for a constant means that $ C \in (0, \infty)$ and depends only on $(a,b,\dots)$.
\end{enumerate}

\subsubsection*{\textbf{Acknowledgements}}
I am very grateful to my PhD thesis advisor, Professor Pascal Auscher, for introducing me to the problem, for fruitful discussions, and making useful suggestions to improve a first version of this manuscript.

\section{Preliminaries and basic assumptions}\label{section 2}

\subsection{The weight and function spaces}
The Muckenhoupt class $A_2(\mathbb{R}^n,\mathrm d x)$ is defined as the set of all measurable and positive functions $\omega : \mathbb{R}^n \rightarrow \mathbb{R}$ verifying 
\begin{equation}\label{MuckWeight}
      [ \omega  ]_{A_2}:= \sup_{Q \subset \mathbb{R}^n  } \left ( \frac{1}{|Q|} \int_Q \omega(x) \ \mathrm{d}x   \right ) \left ( \frac{1}{|Q|} \int_Q \omega^{-1}(x) \ \mathrm{d}x   \right ) < \infty,
\end{equation}
where the supremum is taken with respect to all cubes $Q \subset \mathbb{R}^n$ with sides parallel to the axes and $|Q|$ is the Lebesgue measure of $Q$. We refer to \cite[Ch. V]{Stein1993_HA} for general background and for the proofs of all the results concerning weights that we will cite below.

During all this paper, $\omega=\omega(x)$ denotes a fixed weight belonging to the Muckenhoupt class $A_2(\mathbb{R}^n,\mathrm d x)$. By definition \eqref{MuckWeight}, the weight $\omega^{-1}$ is also in the Muckenhoupt class $A_2(\mathbb{R}^n,\mathrm d x)$ with $  [ \omega^{-1}  ]_{A_2}=  [ \omega  ]_{A_2}$. We introduce the measure $\mathrm d \omega := \omega(x) \mathrm d x$ and if $E \subset \mathbb{R}^n$ a Lebesgue measurable set, we write $\omega(E)$ instead of $\int_E \mathrm d \omega$. We recall that $\omega_r(x)=\omega(B(x,\sqrt{r}))$ for any $x \in \mathbb{R}^n$ and $r>0$.

It follows from \eqref{MuckWeight} that there exits constants $\eta \in (0,1)$ and $\beta >0$, depending only on $n$ and $ [ \omega  ]_{A_2}$, such that 
\begin{equation}\label{MuckProportion}
\beta^{-1}\left ( \frac{\left | E \right |}{\left | Q \right |} \right )^{\frac{1}{2\eta }}\leq \frac{\omega(E)}{\omega(Q)}\leq \beta\left ( \frac{\left | E \right |}{\left | Q \right |} \right )^{2\eta},
\end{equation}
whenever $Q \subset \mathbb{R}^n$ is a cube and for all measurable sets $E \subset Q$. In particular, there exists a constant $D$, depending only on $n$ and $ [ \omega  ]_{A_2}$, called the doubling constant for $\omega$ such that
\begin{equation}\label{DoublingMuck}
    \omega(2Q)\leq D \omega(Q),
\end{equation}
for all cubes $Q \subset \mathbb{R}^n$. We may replace cubes by Euclidean balls. For simplicity, we keep using the same notation and constants.

For every $p\ge 1$ and $K \subset \mathbb{R}^n$ a measurable set, we let $L^p_\omega(K)$ be the space of all measurable functions $f:K \rightarrow \mathbb{C}$ such that $$\|f\|_{L^p_\omega(K)}:=\left ( \int_K |f|^p \mathrm d \omega  \right )^{1/p}<\infty.$$
In particular, $L^2_\omega(\mathbb{R}^n)$ is the Hilbert space of square-integrable functions on $\mathbb{R}^n$ with respect to $\mathrm{d}\omega$. We denote its norm by ${\lVert \cdot \rVert}_{2,\omega}$ and its inner product by $\langle \cdot , \cdot \rangle_{2,\omega}$. The class $\mathcal{D}(\mathbb{R}^n)$ is dense in $L^2_{\omega}(\mathbb{R}^n)$ as it is dense in $C_c(\mathbb{R}^n)$, the space of continuous functions on $\mathbb{R}^n$ with compact support, and this latter space is dense in $L^2_{\omega}(\mathbb{R}^n)$ as $\mathrm d \omega$ is a Radon measure on $\mathbb{R}^n$. Moreover, using the $A_2$-condition \eqref{MuckWeight}, we have 
\begin{equation*}
L^2_{\omega}(\mathbb{R}^n) \subset L^1_{\text{loc}}(\mathbb{R}^n, \mathrm{d}x).
\end{equation*}

We define $H^1_{\omega}(\mathbb{R}^n)$ as the space of functions $f \in L^2_\omega(\mathbb{R}^n)$ for which the distributional gradient $\nabla_x f$ belongs to $L^2_\omega(\mathbb{R}^n)^n$, and equip this space with the norm
$\left\| f \right\|_{H^1_\omega} := ( \left\| f \right\|_{2,\omega}^2 + \left\| \nabla_x f \right\|_{2,\omega}^2 )^{1/2}$ making it a Hilbert space. The class $\mathcal{D}(\mathbb{R}^n)$ is dense in $H^1_{\omega}(\mathbb{R}^n)$ and this follows from standard truncation and convolution techniques combined with the boundedness of the maximal operator on $L^2_\omega(\mathbb{R}^n)$. For a proof, see \cite[Thm. 2.5]{kilpelainen1994weighted}.

Finally, we define the measure $\mu$ on $\mathbb{R}^{n+1}$ by $\mathrm{d} \mu(t,x):= \omega(x)\mathrm{d}x\mathrm{d}t.$
\subsection{The distributional duality bracket}
We define $-\Delta_\omega$ as the unbounded self-adjoint operator on $L^2_\omega(\mathbb{R}^n)$ associated to the positive symmetric sesquilinear form on $H^1_\omega(\mathbb{R}^n) \times H^1_\omega(\mathbb{R}^n)$ defined by
\begin{equation*}
    (u,v) \mapsto \int_{\mathbb{R}^n} \nabla_x u \cdot \overline{\nabla_x v}  \ \mathrm d \omega.
\end{equation*}
For all $\beta \geq 0$, we let $(-\Delta_\omega)^\beta$ be the self-adjoint operator $\mathbf{t^{\beta}}(-\Delta_\omega)$ defined by the Borel functional calculus. We refer to \cite{reed1980methods} for more details.

In this paper, \textbf{we adopt the following definition for the distributional duality bracket, which differs from the standard definition. This distinction will not be repeated in subsequent statements, and all equalities in $\mathcal{D'}$ are understood in the sense of this distributional duality bracket.}  
\begin{defn}
    Let $I \subset \mathbb{R}$ be an open set. For $f \in L^{1}_{\mathrm{loc}}(I;L^2_\omega(\mathbb{R}^n)^n)$, $g \in L^{1}_{\mathrm{loc}}(I;L^2_\omega(\mathbb{R}^n))$ and $\beta \in [0,1]$, we define $- \omega^{-1} \mathrm{div}_x(\omega f), (-\Delta_\omega)^{\beta/2}g \in \mathcal{D'}(I\times \mathbb{R}^n)$ by setting for all $\varphi \in \mathcal{D}(I \times \mathbb{R}^n)$,
\begin{align*}
    \llangle - \omega^{-1} \mathrm{div}_x(\omega f) , \varphi \rrangle_{\mathcal{D}',\mathcal{D}}&:=\iint_{I \times \mathbb{R}^n} f(t,x) \cdot \nabla_x \varphi(t,x) \ \mathrm{d}\mu(t,x), \\
    \llangle (-\Delta_\omega)^{\beta/2}g , \varphi \rrangle_{\mathcal{D}',\mathcal{D}}&:=\iint_{I \times \mathbb{R}^n} g(t,x)  ((-\Delta_\omega)^{\beta/2}\varphi(t,\cdot))(x) \ \mathrm{d}\mu(t,x).
\end{align*}
Likewise, if $u \in L^{1}_{\mathrm{loc}}(I;L^2_\omega(\mathbb{R}^n))$, we define its distributional time derivative $\partial_t u \in \mathcal{D'}(I\times \mathbb{R}^n)$ by setting for all $\varphi \in \mathcal{D}(I \times \mathbb{R}^n)$,
\begin{equation*}
    \llangle \partial_t u , \varphi \rrangle_{\mathcal{D}',\mathcal{D}}:=\iint_{I \times \mathbb{R}^n} -u(t,x) \ \partial_t \varphi(t,x) \ \mathrm{d}\mu(t,x).
\end{equation*}
\end{defn}

\subsection{The degenerate parabolic operator}
Throughout this paper, we fix a matrix-valued function $A: \mathbb{R}\times \mathbb{R}^n \rightarrow M_n(\mathbb{C})$ with complex measurable coefficients and such that 
\begin{equation}\label{ellipticité A}
\left | A(t,x)\xi \cdot \zeta  \right |\leq M \omega(x)\left | \xi \right |\left | \zeta  \right |, \ \ \ \  
\nu \left | \xi \right |^2\omega(x)\leq \mathrm{Re}(A(t,x)\xi\cdot \overline{\xi})
\end{equation}
for some $M,\nu>0$ and for all $\zeta, \xi \in \mathbb{C}^n$ and $(t,x) \in \mathbb{R}\times \mathbb{R}^n$.

\begin{defn}[The degenerate parabolic operator]\label{def:duality}
    Let $I\subset \mathbb{R}$ be an open set. For any function $u \in L^1_{\mathrm{loc}}(I;H^1_{\omega,\mathrm{loc}}(\mathbb{R}^n))$ with $\nabla_x u \in L^2(I; L^2_\omega(\mathbb{R}^n))$, we define $\mathcal{H}u=\partial_t u -\omega^{-1} \mathrm{div}_x(A(t,\cdot)\nabla_x u)\in \mathcal{D}'(I \times \mathbb{R}^n)$ by setting for all $\varphi \in  \mathcal{D}(I \times \mathbb{R}^n)$:  
    \begin{align*}
       \llangle \mathcal{H}u, \varphi \rrangle_{\mathcal{D}',\mathcal{D}} &:= \llangle \partial_t u -\omega^{-1} \mathrm{div}_x(A(t,\cdot)\nabla_x u), \varphi \rrangle_{\mathcal{D}',\mathcal{D}}
        \\& = \iint_{I\times \mathbb{R}^n} \left ( -u(t,x)  \partial_t \varphi(t,x) +\omega^{-1}(x) A(t,x)\nabla_x u(t,x) \cdot \nabla_x \varphi(t,x)  \right ) \ \mathrm{d}\mu(t,x).
    \end{align*}
    Likewise, we define $\mathcal{H}^\star$, the formal adjoint of $\mathcal{H}$, by setting $\mathcal{H}^\star u:= - \partial_s u - \omega^{-1} \mathrm{div}_x(A^\star (s,\cdot)\nabla_x u)$ where $A^\star$ is the hermitian adjoint of the matrix $A$.
\end{defn}

\section{Existence, uniqueness, representation and regularity results}\label{section 3}

We apply the method developed in \cite{auscher2024fundamental} in this context by taking $T=\nabla_x:H^1_\omega(\mathbb{R}^n) \to L^2_\omega(\mathbb{R}^n)^n$ which is a closed and densely defined operator on $L^2_\omega(\mathbb{R}^n)$. Moreover, it is injective because the measure $\mathrm{d}\omega$ has infinite mass by \eqref{MuckProportion}, and $\mathcal{D}(\mathbb{R}^n)$ is dense in $D(T) = H^1_\omega(\mathbb{R}^n)$ with respect to the graph norm. Furthermore, we have $T^\star T = -\Delta_\omega$, hence $S=(T^\star T)^{1/2} = (-\Delta_\omega)^{1/2}$.

To connect the degenerate parabolic operator defined above with the one defined in \cite[Section 6]{auscher2024fundamental}, we define, for all $t \in \mathbb{R}$, the sesquilinear form $B_t : H^1_\omega(\mathbb{R}^n) \times H^1_\omega(\mathbb{R}^n) \rightarrow \mathbb{C}$ by setting
\begin{equation*}
   \forall u,v \in H^1_\omega(\mathbb{R}^n) , \  B_t(u,v):=  \int_{\mathbb{R}^{n}} \omega ^{-1}A(t,\cdot)\nabla_xu \cdot \overline{\nabla_x v} \ \mathrm d \omega. 
\end{equation*}
We have for all $u, v \in  H^1_\omega(\mathbb{R}^n)$,
\begin{equation}\label{+}
     \left | B_t(u,v) \right |\leq M \left \| \nabla_x u \right \|_{2,\omega}\left \| \nabla_x v \right \|_{2,\omega}\ ,\ \  \nu \left \| \nabla_x u \right \|_{2,\omega}^2 \leq \mathrm{Re} (B_t(u,u)).
\end{equation}
Moreover, for all $u,v \in H^1_{\omega}(\mathbb{R}^n)$, the function $t \mapsto B_t(u,v)$ is Borel by Fubini's theorem. Therefore, the family $(B_t)_{t\in \mathbb{R}}$ is a weakly measurable family of bounded and coercive sesquilinear forms on $H^1_{\omega}(\mathbb{R}^n) \times H^1_{\omega}(\mathbb{R}^n)$ with respect to the homogeneous norm $\left \| \nabla_x \cdot \right \|_{2,\omega}$ and with uniform bounds $M>0$ and $\nu >0$. 

In this section, we present the key results obtained using the method developed in \cite{auscher2024fundamental} within the context of this paper.
\begin{rem}
   In \cite{auscher2024fundamental}, we have used sesquilinear brackets and we easily bring ourselves to the distributional duality bracket defined above as we have
$$\overline{(-\Delta_\omega)^{\beta/2} \overline{\phi}}=(-\Delta_\omega)^{\beta/2} \phi, \  \text{for all} \ \beta \in [0,1] \ \text{and} \ \phi \in \mathcal{D}(\mathbb{R}^n).$$
\end{rem}
\begin{rem}
    As we use mainly \eqref{+}, our strategy extends to systems as well. 
\end{rem}

\subsection{A Lions' type embedding with integral identities}
We begin by presenting the following proposition, which improves the classical Lions embedding theorem \cite{lions1957problemes}. This result corresponds to \cite[Theorem 2.1]{auscher2024fundamental} in this concrete case.
\begin{prop}\label{prop:energyboundedinterval} 
    Let $I=(0,\mathfrak{T})$ be a  bounded, open interval of $\mathbb{R}$. Let $u \in L^1(I;H^1_\omega(\mathbb{R}^n))$ such that $\nabla_x u\in L^2(I;L^2_\omega(\mathbb{R}^n))$. Assume that $\partial_t u =  - \omega^{-1} \mathrm{div}_x(\omega f)+(-\Delta_\omega)^{\beta/2} g$ in $\mathcal{D'}(I \times \mathbb{R}^n)$ with $f \in L^2(I;L^2_\omega(\mathbb{R}^n)^n)$ and 
     $g \in L^{\rho'}(I;L^2_\omega(\mathbb{R}^n))$, where ${\beta}={2}/{\rho} \in [0,1)$ and $\rho'$ is the conjugate H\"older exponent to $\rho$. Then $ u \in C(\Bar{I},L^2_\omega(\mathbb{R}^n))$
and $ t \mapsto \left \| u(t) \right \|^2_{2,\omega}$ is absolutely continuous on $\Bar{I}$ with, for all $ \sigma, \tau \in \Bar{I}$ such that $  \sigma < \tau$, the integral identity
\begin{align*}
        \left \| u(\tau) \right \|^2_{2,\omega}-\left \| u(\sigma) \right \|^2_{2,\omega} = 2\mathrm{Re}\int_{\sigma}^{\tau} \langle f(t),\nabla_x u(t)\rangle_{{2,\omega}} + \langle g(t),(-\Delta_\omega)^{\beta/2} u(t)\rangle_{{2,\omega}} \  \mathrm d t.
\end{align*}
\end{prop}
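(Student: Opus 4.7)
The plan is to invoke the abstract Lions-type embedding \cite[Theorem 2.1]{auscher2024fundamental} with the concrete choice $T = \nabla_x : H^1_\omega(\mathbb{R}^n) \to L^2_\omega(\mathbb{R}^n)^n$ and $S = (T^\star T)^{1/2} = (-\Delta_\omega)^{1/2}$ introduced at the start of Section \ref{section 3}. All structural hypotheses of that theorem are verified there: $T$ is closed, densely defined, and injective (since $\omega(\mathbb{R}^n) = +\infty$ by \eqref{MuckProportion}), $\mathcal{D}(\mathbb{R}^n)$ is a core for $T$, and the given PDE rewrites as $\partial_t u = -T^\star f + S^\beta g$ in the sense of the distributional duality bracket of Section \ref{section 2}. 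Direct application of the abstract statement then yields the continuous $L^2_\omega$-valued representative on $\bar I$ and the integral identity in one stroke.

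For a self-contained derivation, I would proceed by mollification in time. After extending $u$, $f$, and $g$ from $I$ to all of $\mathbb{R}$ by reflection at the endpoints (preserving function-space memberships and the distributional equation up to source terms localized near $\partial I$), I would set $u_\epsilon := \rho_\epsilon *_t u$, and likewise $f_\epsilon, g_\epsilon$, for a standard mollifier $\rho_\epsilon$. Since $\omega^{-1}\mathrm{div}_x(\omega\,\cdot\,)$ and $(-\Delta_\omega)^{\beta/2}$ act only in $x$, they commute with $t$-convolution, so
\begin{equation*}
\partial_t u_\epsilon \,=\, -\omega^{-1}\mathrm{div}_x(\omega f_\epsilon) + (-\Delta_\omega)^{\beta/2} g_\epsilon
\end{equation*}
holds in the classical sense with $u_\epsilon$ smooth in $t$ and valued in $H^1_\omega(\mathbb{R}^n)$.

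For such $u_\epsilon$ the map $t \mapsto \|u_\epsilon(t)\|^2_{2,\omega}$ is of class $C^1$ with derivative $2\,\mathrm{Re}\,\langle \partial_t u_\epsilon(t), u_\epsilon(t)\rangle_{2,\omega}$. Substituting the PDE, then integrating by parts in $x$ for the $f_\epsilon$-term and invoking self-adjointness of $(-\Delta_\omega)^{\beta/2}$ for the $g_\epsilon$-term, and finally integrating on $[\sigma,\tau]$, would produce the integral identity at level $\epsilon$. The key quantitative ingredient is the spectral interpolation
\begin{equation*}
\|(-\Delta_\omega)^{\beta/2} v\|_{2,\omega} \;\leq\; \|v\|_{2,\omega}^{1-\beta}\,\|\nabla_x v\|_{2,\omega}^{\beta}, \quad v \in H^1_\omega(\mathbb{R}^n),
\end{equation*}
coming from Borel functional calculus together with $\|\nabla_x v\|_{2,\omega} = \|(-\Delta_\omega)^{1/2}v\|_{2,\omega}$. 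Raising to the $\rho$-th power in time and using $\beta\rho = 2$, this would control $(-\Delta_\omega)^{\beta/2}u_\epsilon$ in $L^\rho(I;L^2_\omega)$ by $\|u_\epsilon\|_{L^\infty(I;L^2_\omega)}^{1-\beta}\|\nabla_x u_\epsilon\|_{L^2(I;L^2_\omega)}^\beta$; combined with H\"older and Young applied to the identity, it would yield a uniform $L^\infty(\bar I; L^2_\omega)$ bound for $\{u_\epsilon\}$.

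Finally, applying the identity to differences $u_{\epsilon_1} - u_{\epsilon_2}$ together with the strong convergence of the mollified data would show $\{u_\epsilon\}$ to be Cauchy in $C(\bar I; L^2_\omega(\mathbb{R}^n))$, whose limit is the desired continuous representative of $u$, and passing to the limit in the integral identity closes the argument. The main obstacle is the fractional operator $(-\Delta_\omega)^{\beta/2}$ when $\beta > 0$: the interpolation inequality above, its commutation with time mollification, and the convergence $(-\Delta_\omega)^{\beta/2} u_\epsilon \to (-\Delta_\omega)^{\beta/2} u$ in $L^\rho(I; L^2_\omega)$ all hinge on careful manipulations with the spectral measure of the self-adjoint operator $-\Delta_\omega$ on $L^2_\omega(\mathbb{R}^n)$.
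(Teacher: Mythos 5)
Your primary argument --- invoking the abstract Lions-type embedding of \cite[Theorem 2.1]{auscher2024fundamental} with $T=\nabla_x$, $S=(-\Delta_\omega)^{1/2}$, and the structural verifications from the beginning of Section \ref{section 3} --- is exactly how the paper establishes Proposition \ref{prop:energyboundedinterval}, which is presented there as the concrete instance of that cited theorem. The supplementary mollification sketch is not needed for agreement with the paper and essentially reproduces the strategy of the cited reference, so the proposal is correct and follows the same route.
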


\subsection{Fundamental solution}
We define the fundamental solution family as representing the inverse of the degenerate parabolic operator $\mathcal{H}=\partial_t - \omega^{-1}\mathrm{div}_x(A(t,\cdot) \nabla_x)$ on $\mathbb{R}\times \mathbb{R}^n$.
\begin{defn}[Fundamental solution for $\mathcal{H}=\partial_t - \omega^{-1}\mathrm{div}_x(A(t,\cdot) \nabla_x)$]
A fundamental solution for $\mathcal{H}$ is a family $\Gamma=(\Gamma(t,s))_{t,s \in \mathbb{R}}$ of bounded operators on $L^2_\omega(\mathbb{R}^n)$ such that :
\begin{enumerate}
        \item $\sup_{t,s \in \mathbb{R}} \left\|\Gamma(t,s) \right\|_{\mathcal{L}(L^2_\omega(\mathbb{R}^n))} < +\infty.$
        \item $\Gamma(t,s)=0$ if $s>t$.
        \item For all $\psi,\Tilde{\psi}\in \mathcal{D}(\mathbb{R}^n)$, the function $(t,s) \mapsto \langle \Gamma(t,s)\psi, \Tilde{\psi} \rangle_{2,\omega}$ is Borel measurable on $\mathbb{R}^2$.
        \item For all $\phi \in \mathcal{D}(\mathbb{R})$ and $\psi \in \mathcal{D}(\mathbb{R}^n)$, any $u \in L^1_{\mathrm{loc}}(\mathbb{R};H^1_\omega(\mathbb{R}^n))$ with $\int_\mathbb{R}\|\nabla_x u(t)\|^2_{2,\omega}\, \mathrm{d}t<\infty$ solution of the equation $\mathcal{H}u = \phi \otimes \psi $ in $\mathcal{D}'(\mathbb{R}\times\mathbb{R}^{n})$ satisfies $\langle u(t), \Tilde{\psi} \rangle_{2,\omega} = \int_{-\infty}^{t} \phi(s) \langle \Gamma (t,s)\psi, \Tilde{\psi} \rangle_{2,\omega} \ \mathrm{d}s,$ for all $\Tilde{\psi} \in \mathcal{D}(\mathbb{R}^n)$ and for almost every $t\in \mathbb{R}$. We have set $(\phi \otimes \psi)(t,x)=\phi(t) \psi(x)$.
\end{enumerate}
In the same manner, one defines a fundamental solution $(\Tilde{\Gamma}(s,t))_{s,t \in \mathbb{R}}$ to the backward operator \( \mathcal{H}^\star=-\partial_s - \omega^{-1}\mathrm{div}_x ( A^\star(s,\cdot) \nabla_x) \), and (2) is replaced by $\Tilde{\Gamma}(s,t) = 0$ if $s > t$.
\end{defn}
We now present the following theorem, which guarantees the existence of a unique fundamental solution family with several properties.

\begin{thm}\label{thm:FundSol}
The operator $\mathcal{H}$ on $\mathbb{R} \times \mathbb{R}^n$ has a unique fundamental solution, as does $\mathcal{H}^\star$. Moreover, we have the following properties. Functions vanishing at infinity are denoted by $C_0$.
\begin{enumerate}[label=\Alph*)]
    \item (Estimates for the fundamental solution of $\mathcal{H}$) For all $s \in \mathbb{R}$ and $\psi \in L^2_\omega(\mathbb{R}^n)$, $t \mapsto \Gamma(t,s)\psi \in C_0([s,\infty);L^2_\omega(\mathbb{R}^n))$ with $ \Gamma(s,s)\psi =\psi $, $\Gamma(\cdot,s)\psi \in L^1_{\mathrm{loc}}((s,\infty);H^1_\omega(\mathbb{R}^n))$, and there exists a constant $C=C(M,\nu)>0$ such that
    \begin{equation*}
        \sup_{t \geq s} \left \| \Gamma(t,s)  \right \|_{\mathcal{L}(L^2_\omega(\mathbb{R}^n))} \leq C \ \ \mathrm{and} \ \  \int_{s}^{+\infty} \left \| \nabla_x \Gamma(t,s)\psi \right \|^2_{2,\omega}\mathrm d t \leq C^2 \left \| \psi \right \|_{2,\omega}^2.
    \end{equation*}
    Moreover, for any $r \in (2,\infty)$, we have $(-\Delta_\omega)^{\alpha/2}\Gamma(\cdot,s)\psi \in L^r((s,\infty);L^2_\omega(\mathbb{R}^n))$ where $\alpha={2}/{r}$ with
    \begin{equation*}
        \int_{s}^{+\infty}  \|(-\Delta_\omega)^{\alpha/2} \Gamma(t,s)\psi \|^r_{2,\omega}\mathrm d t \leq C^r \left \| \psi \right \|_{2,\omega}^r.
    \end{equation*}
    \item (Estimates for the fundamental solution of $\mathcal{H}^\star$) For all $t \in \mathbb{R}$ and $\psi \in L^2_\omega(\mathbb{R}^n)$, $s \mapsto \Tilde{\Gamma}(s,t)\psi \in C_0((-\infty,t];L^2_\omega(\mathbb{R}^n))$ with $\Tilde{\Gamma}(t,t)\psi =\psi$, $\Tilde{\Gamma}(\cdot,t)\psi \in L^1_{\mathrm{loc}}((-\infty,t);H^1_\omega(\mathbb{R}^n))$, and there exists a constant $C=C(M,\nu)>0$ such that
    \begin{equation*}
       \sup_{s \leq t}  \| \Tilde{\Gamma}(s,t)   \|_{\mathcal{L}(L^2_\omega(\mathbb{R}^n))} \leq C \ \ \mathrm{and} \ \ \int_{-\infty}^{t}  \| \nabla_x \Tilde{\Gamma}(s,t) \psi \|^2_{2,\omega}\mathrm d s \leq C^2 \| \psi  \|_{2,\omega}^2.
    \end{equation*}
    Moreover, for any $r \in (2,\infty)$, we have $(-\Delta_\omega)^{\alpha/2}\Tilde{\Gamma}(\cdot,t)\psi \in L^r((-\infty,t);L^2_\omega(\mathbb{R}^n))$ where $\alpha={2}/{r}$ with
    \begin{equation*}
        \int_{-\infty}^{t}  \|(-\Delta_\omega)^{\alpha/2} \Tilde{\Gamma}(s,t)\psi \|^r_{2,\omega}\mathrm d s \leq C^r \left \| \psi \right \|_{2,\omega}^r.
    \end{equation*}
    \item (Adjointess property) For all $s <t $, $\Gamma(t,s)$ and $\Tilde{\Gamma}(s,t)$ are  adjoint operators.
    \item (Chapman-Kolmogorov identities) For any $s < r <t $, we have $\Gamma(t,s)=\Gamma(t,r)\Gamma(r,s)$.
\end{enumerate}
\end{thm}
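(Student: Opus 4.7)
The overall plan is to deduce Theorem \ref{thm:FundSol} by specialising the abstract construction of \cite{auscher2024fundamental} to $T = \nabla_x$. The first task is to check that the general framework applies. Viewed from $L^2_\omega(\mathbb{R}^n)$ to $L^2_\omega(\mathbb{R}^n)^n$ with domain $H^1_\omega(\mathbb{R}^n)$, the operator $T$ is closed, densely defined and injective---the latter because $\mathrm{d}\omega$ has infinite mass by \eqref{MuckProportion}---and $S = (T^\star T)^{1/2} = (-\Delta_\omega)^{1/2}$. The family $(B_t)_{t \in \mathbb{R}}$ is weakly measurable, uniformly bounded and uniformly coercive in the homogeneous norm $\|\nabla_x \cdot\|_{2,\omega}$ by \eqref{+}, which are exactly the structural hypotheses of the abstract machinery.

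I would then construct $\Gamma(t,s)$ as the solution operator of the forward Cauchy problem. Fix $s \in \mathbb{R}$ and $\psi \in L^2_\omega(\mathbb{R}^n)$. Lions' variational method, applied on each bounded interval $(s, s+R)$ and patched, produces a weak solution $u$ of $\mathcal{H}u = 0$ on $(s,\infty) \times \mathbb{R}^n$ with $u(s) = \psi$, $\nabla_x u \in L^2((s,\infty); L^2_\omega(\mathbb{R}^n))$, and a uniform $L^2_\omega$ bound depending only on $M$ and $\nu$. Proposition \ref{prop:energyboundedinterval}, applied with $f = -\omega^{-1} A \nabla_x u$ and $g = 0$, promotes this to $u \in C([s,\infty); L^2_\omega(\mathbb{R}^n))$ and yields the integral identity $\|u(\tau)\|_{2,\omega}^2 - \|\psi\|_{2,\omega}^2 = -2 \mathrm{Re}\int_s^\tau B_t(u(t), u(t))\, \mathrm{d}t$; coercivity of $B_t$ then delivers both estimates of (A). Setting $\Gamma(t,s)\psi := u(t)$ defines the candidate, and a Duhamel computation against sources of the form $\phi \otimes \psi$ confirms the characterisation in item (4) of the definition. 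The $L^r_t L^2_\omega$ bound on $(-\Delta_\omega)^{\alpha/2} \Gamma(\cdot,s)\psi$ with $\alpha = 2/r$ follows by real interpolation between the endpoint bounds on $L^\infty_t L^2_\omega$ and $L^2_t \dot H^1_\omega$, using $\|(-\Delta_\omega)^{1/2} v\|_{2,\omega} = \|\nabla_x v\|_{2,\omega}$ and the spectral functional calculus of $-\Delta_\omega$. Property (B) is proved symmetrically by solving $\mathcal{H}^\star v = 0$ backward in time.

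For (C), given $\psi, \tilde\psi \in \mathcal{D}(\mathbb{R}^n)$ and $s < t$, I would set $u(\tau) = \Gamma(\tau, s)\psi$ and $v(\tau) = \tilde\Gamma(\tau, t)\tilde\psi$ on $[s,t]$; a polarised version of Proposition \ref{prop:energyboundedinterval} shows that $\tau \mapsto \langle u(\tau), v(\tau)\rangle_{2,\omega}$ is absolutely continuous with derivative $-B_\tau(u(\tau), v(\tau)) + B_\tau(u(\tau), v(\tau)) = 0$, so that its values at the endpoints coincide and yield $\langle \Gamma(t,s)\psi, \tilde\psi\rangle_{2,\omega} = \langle \psi, \tilde\Gamma(s,t)\tilde\psi\rangle_{2,\omega}$. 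The Chapman--Kolmogorov identity (D) then follows from uniqueness of the Cauchy problem: both $\tau \mapsto \Gamma(\tau, r)[\Gamma(r,s)\psi]$ and $\tau \mapsto \Gamma(\tau, s)\psi$ solve $\mathcal{H}u = 0$ on $(r, \infty)$ with the same initial datum at $\tau = r$, hence coincide. Uniqueness of $\Gamma$ as a fundamental solution follows from item (4) of the definition: any two candidates reproduce the same weak solution of $\mathcal{H}u = \phi \otimes \psi$ and are therefore equal as operators.

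The main obstacle is the integral identity of Proposition \ref{prop:energyboundedinterval} itself. At the level of weak solutions, the distributional time derivative $\partial_t u$ lives in a space involving negative fractional powers of $-\Delta_\omega$, so rigorously converting the formal pairing $\langle \partial_t u, u\rangle$ into $\frac{1}{2} \frac{\mathrm{d}}{\mathrm{d}t} \|u\|_{2,\omega}^2$ requires a regularisation in time together with the spectral calculus of $-\Delta_\omega$, carefully adapted to the degenerate setting. Once this identity is available, every subsequent step reduces to a classical Lions-type argument.
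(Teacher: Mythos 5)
Your proposal follows the same route as the paper: the paper's entire proof consists of verifying that $T=\nabla_x$ and the forms $B_t$ satisfy the hypotheses of the abstract framework of \cite{auscher2024fundamental} (exactly the checks you perform) and then citing Lemma 6.24, Theorem 6.25, Corollary 6.21 and Proposition 6.22 of that reference. Your reconstruction of the internals (energy identity plus coercivity for the bounds in (A)--(B), spectral interpolation for the $L^r$ estimate, polarized energy identity for (C), uniqueness of the Cauchy problem for (D)) is consistent with how those results are obtained; the one point your sketch glosses over is the decay $\|\Gamma(t,s)\psi\|_{2,\omega}\to 0$ as $t\to\infty$ required by the $C_0$ assertion, which does not follow from monotonicity of the energy alone and in the cited construction comes from the global-in-time variational setup rather than from patching Lions' method on bounded intervals.
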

\begin{proof}
    The existence and uniqueness follow from Lemma 6.24 and Theorem 6.25 in \cite{auscher2024fundamental}. As for the properties, we refer to Corollary 6.21 and Proposition 6.22 in the same paper.
\end{proof}

\subsection{The Cauchy problem and the fundamental solution}
In this section, we focus on the Cauchy problem on segments and half-lines. We fix $0 < \mathfrak{T} \le \infty$ and consider the model case $(0, \mathfrak{T})$. Let $\rho \in (2, \infty]$, and set $\beta = \frac{2}{\rho} \in [0, 1)$, with $\rho'$ being the H\"older conjugate of $\rho$. The main result of this section is the following proposition.
\begin{prop}[Cauchy problem on $(0,\mathfrak{T})$]\label{prop: Pb de Cauchy borné} Let $ f \in L^2((0,\mathfrak{T});L^2_\omega(\mathbb{R}^n)^n)$, $g\in L^{\rho'}((0,\mathfrak{T});L^2_\omega(\mathbb{R}^n))$ and $\psi \in L^2_\omega(\mathbb{R}^n)$. Then there exists a unique $u \in L^1((0,\mathfrak{T});H^1_\omega(\mathbb{R}^n))$ with $\int_0^\mathfrak{T}\|\nabla_x u(t)\|^2_{2,\omega}\, \mathrm{d}t<\infty$ if $\mathfrak{T}<\infty$ and $u \in L^1_{\mathrm{loc}}((0,\infty);H^1_\omega(\mathbb{R}^n))$ with $\int_0^\infty\|\nabla_x u(t)\|^2_{2,\omega}\, \mathrm{d}t<\infty$ if $\mathfrak{T}=\infty$ solution to the Cauchy problem 
\begin{align*}
\left\{
    \begin{array}{ll}
        \mathcal{H}u  =  - \omega^{-1} \mathrm{div}_x(\omega f) +(-\Delta_\omega)^{\beta/2}g \ \mathrm{in} \  \mathcal{D}'((0,\mathfrak{T})\times \mathbb{R}^n), \\
        u(t) \rightarrow \psi \  \mathrm{ in } \ \mathcal{D'}(\mathbb{R}^n) \ \mathrm{as} \ t \rightarrow 0^+.
    \end{array}
\right.
\end{align*} 
Moreover, $u \in C([0,\mathfrak{T}];L^2_\omega(\mathbb{R}^n))$ with $u(0)=\psi$, $\lim_{t \to \infty} u(t)=0$ if $\mathfrak{T}=\infty$ (by convention, set $u(\infty)=0$), $t \mapsto \| u(t)  \|^2_{2,\omega}$ is absolutely continuous on $[0,\mathfrak{T}]$ and we can write the energy equalities. Furthermore, we have $(-\Delta_\omega)^{\alpha/2} u \in  L^r((0,\mathfrak{T});L^2_\omega(\mathbb{R}^n))$ for any $\alpha \in (0,1]$ with $r=\frac{2}{\alpha} \in [2,\infty)$ with
\begin{align*}
            \sup_{t\in [0,\mathfrak{T}]} \| u(t) \|_{2,\omega}+ \| (-\Delta_\omega)^{\alpha /2} u\|_{L^r((0,\mathfrak{T});L^2_\omega(\mathbb{R}^n))} 
            \leq C  ( \| f  \|_{L^2((0,\mathfrak{T});L^2_\omega(\mathbb{R}^n)^n)}+ \left \| g \right \|_{L^{\rho'}((0,\mathfrak{T});L^2_\omega(\mathbb{R}^n))}+  \| \psi  \|_{2,\omega}  ),
\end{align*} 
where $C=C(M,\nu,\rho)>0$ is a constant. Lastly,  for all $t \in [0,\mathfrak{T}]$, we have the following representation of $u$ (by convention, set $\Gamma(\infty,s)=0$ if $\mathfrak{T}=\infty$): 
\begin{align}\label{rep}
    u(t) = \Gamma(t,0)a  + \int_{0}^{t} \Gamma(t,\tau) (- \omega^{-1} \mathrm{div}_x(\omega f))(\tau)\ \mathrm d \tau  + \int_{0}^{t} \Gamma(t,\tau) (-\Delta_\omega)^{\beta/2}g(\tau)\ \mathrm d \tau,
\end{align}
where the two integrals are weakly defined in $L^2_\omega(\mathbb{R}^n)$. More precisely,  for all $\Tilde{\psi}\in L^2_\omega(\mathbb{R}^n)$, we have the equality with absolutely converging integrals
    \begin{align*}
        \langle u(t), \Tilde{\psi} \rangle_{2,\omega} = \langle \Gamma(t,0)\psi, \Tilde{\psi} \rangle_{2,\omega}+ \int_{0}^{t} \langle f(\tau), \nabla_x \Tilde{\Gamma}(\tau,t)\Tilde{\psi} \rangle_{2,\omega}\  \mathrm d \tau + \int_{0}^{t} \langle  g(\tau),(-\Delta_\omega)^{\beta/2}\Tilde{\Gamma}(\tau,t) \Tilde{\psi} \rangle_{2,\omega} \ \mathrm d \tau.
    \end{align*}
When $\rho=\infty$, or equivalently $\beta = 0$, then the last integral in \eqref{rep} converges also strongly in $L^2_\omega(\mathbb{R}^n)$, \textit{i.e.} in the Bochner sense.
\end{prop}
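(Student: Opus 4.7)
The plan is to establish the proposition first on a bounded interval $\mathfrak{T}<\infty$ and then pass to $\mathfrak{T}=\infty$ by exhaustion. For the bounded case, existence and uniqueness follow from the abstract Lions-type variational framework of \cite{auscher2024fundamental} applied to the family $(B_t)_{t\in\mathbb{R}}$, which by \eqref{+} is weakly measurable, uniformly bounded, and coercive on $H^1_\omega(\mathbb{R}^n)$ with respect to the homogeneous norm $\|\nabla_x \cdot\|_{2,\omega}$. The source term $-\omega^{-1}\mathrm{div}_x(\omega f)+(-\Delta_\omega)^{\beta/2}g$ defines, via the distributional duality bracket, a bounded antilinear functional of the expected type on suitable test spaces, so the abstract existence theorem of \cite{auscher2024fundamental} applies. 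Uniqueness: the difference $w=u_1-u_2$ satisfies $\mathcal{H}w=0$ with $w(0)=0$; Proposition \ref{prop:energyboundedinterval} applied to $\partial_t w = -\omega^{-1}\mathrm{div}_x(\omega\cdot(-\omega^{-1}A\nabla_x w))$ together with coercivity \eqref{+} forces $w\equiv 0$.

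Continuity $u\in C([0,\mathfrak{T}];L^2_\omega(\mathbb{R}^n))$, absolute continuity of $t\mapsto\|u(t)\|_{2,\omega}^2$, and the energy identity all come directly from Proposition \ref{prop:energyboundedinterval} applied to the rewritten equation $\partial_t u = -\omega^{-1}\mathrm{div}_x\bigl(\omega(f-\omega^{-1}A\nabla_x u)\bigr)+(-\Delta_\omega)^{\beta/2}g$; note that $f-\omega^{-1}A\nabla_x u\in L^2((0,\mathfrak{T});L^2_\omega(\mathbb{R}^n)^n)$ by \eqref{ellipticité A}. The $\sup_t\|u(t)\|_{2,\omega}$ bound and the $L^2 L^2$ gradient bound are extracted from the identity by absorbing the quadratic gradient term using coercivity \eqref{+} and estimating the data pairings by Cauchy--Schwarz and Young's inequality (for the $g$-term one additionally uses $\|(-\Delta_\omega)^{\beta/2}u\|_{L^\rho L^2_\omega}$, controlled by interpolation below).

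For the mixed-norm estimate $\|(-\Delta_\omega)^{\alpha/2}u\|_{L^r((0,\mathfrak{T});L^2_\omega)}$ with $r=2/\alpha$, I would use the complex interpolation method along the analytic family $\{(-\Delta_\omega)^{z/2}\}_{\mathrm{Re}(z)\in[0,1]}$ (defined via Borel functional calculus on the self-adjoint operator $-\Delta_\omega$), interpolating between the endpoints $\alpha=0,r=\infty$ (the uniform bound above) and $\alpha=1,r=2$ (using the identity $\|(-\Delta_\omega)^{1/2}u\|_{2,\omega}=\|\nabla_x u\|_{2,\omega}$ on $H^1_\omega(\mathbb{R}^n)$ and the gradient bound). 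The representation formula \eqref{rep} I would obtain by density: first verify it for tensorized data of the form $f\equiv 0$, $g=\phi\otimes\psi$, initial datum $\tilde\psi$, where property (4) in the definition of the fundamental solution yields the identity directly; then extend by linearity and a density argument using the adjoint identification $\Gamma(t,\tau)^\star=\tilde\Gamma(\tau,t)$ from Theorem \ref{thm:FundSol}(C) and the adjoint estimates in Theorem \ref{thm:FundSol}(B), which supply $\nabla_x\tilde\Gamma(\cdot,t)\tilde\psi\in L^2L^2_\omega$ and $(-\Delta_\omega)^{\beta/2}\tilde\Gamma(\cdot,t)\tilde\psi\in L^\rho L^2_\omega$, matched by H\"older against $f\in L^2L^2_\omega$ and $g\in L^{\rho'}L^2_\omega$. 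When $\beta=0$ (i.e.\ $\rho=\infty$), the integrand $\tau\mapsto\Gamma(t,\tau)g(\tau)$ is strongly measurable and norm-dominated by $C\|g(\tau)\|_{2,\omega}$, which is in $L^1((0,\mathfrak{T}))$, giving Bochner integrability. For $\mathfrak{T}=\infty$, I would exhaust by $\mathfrak{T}_k\uparrow\infty$, use uniqueness to paste, and derive $\lim_{t\to\infty}\|u(t)\|_{2,\omega}=0$ from the energy identity applied on $[t,\infty)$ combined with the global integrability of $\|\nabla_x u\|_{2,\omega}^2$.

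The main obstacle I expect is the Strichartz-type mixed-norm bound: the complex interpolation argument requires a Stein-type analyticity and uniform boundedness check for the family $(-\Delta_\omega)^{z/2}$ on a closed strip, which in the weighted setting is cleanest at the abstract operator-theoretic level of \cite{auscher2024fundamental} since no heat-kernel machinery is yet available at this point of the paper. A secondary delicate point is giving precise meaning to the weak integrals in \eqref{rep} when $\beta>0$, for which one must rely on the adjoint regularity in Theorem \ref{thm:FundSol}(B) rather than a naive Bochner argument.
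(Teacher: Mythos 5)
Your proposal and the paper take genuinely different routes. The paper's proof is essentially a citation: the case $\mathfrak{T}=\infty$ is obtained directly from Theorems 8.1 and 6.35 of \cite{auscher2024fundamental} (which already package the variational existence theory, the mixed-norm estimates, and the representation formula at the abstract level of the operator $T=\nabla_x$ set up in Section \ref{section 3}), and the case $\mathfrak{T}<\infty$ then follows by extending the data by zero, restricting the half-line solution, and proving uniqueness via Proposition \ref{prop:energyboundedinterval} and \eqref{ellipticité A}. You go in the opposite direction: you solve on bounded intervals first and exhaust to reach $\mathfrak{T}=\infty$, and you reconstruct the ingredients (Stein interpolation along $(-\Delta_\omega)^{z/2}$ for the $L^r$--$L^2_\omega$ estimate, density from tensorized data for the representation) rather than citing them. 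Your uniqueness argument coincides with the paper's. What your route buys is self-containedness; what it costs is that you must re-prove facts the companion paper already supplies, and two of these re-proofs as sketched have gaps.

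First, the decay $\lim_{t\to\infty}\|u(t)\|_{2,\omega}=0$ does \emph{not} follow from the energy identity on $[t,\infty)$ together with $\int_0^\infty\|\nabla_x u\|^2_{2,\omega}\,\mathrm{d}t<\infty$: the energy identity shows $\|u(t)\|^2_{2,\omega}$ is Cauchy as $t\to\infty$, hence convergent, but there is no Poincaré inequality on $\mathbb{R}^n$ forcing the limit to vanish. You need the $C_0$ property of $t\mapsto\Gamma(t,s)\psi$ from Theorem \ref{thm:FundSol}~A) combined with decay of the Duhamel terms, i.e.\ the representation formula itself. Second, your density argument for \eqref{rep} leans on property (4) of the definition of the fundamental solution, but that property only covers right-hand sides of the form $\phi\otimes\psi$ with $\psi\in\mathcal{D}(\mathbb{R}^n)$, i.e.\ $L^2_\omega$-valued sources; it does not cover the divergence-form source $-\omega^{-1}\mathrm{div}_x(\omega f)$, so the $f$-term of the representation is not reached by this approximation. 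The standard fix, consistent with the weak formulation stated in the proposition, is to test $u(t)$ against the backward solution $\tilde\Gamma(\cdot,t)\tilde\psi$ and integrate by parts in time via Proposition \ref{prop:energyboundedinterval}, using the adjoint estimates of Theorem \ref{thm:FundSol}~B) to make sense of the pairings. With these two repairs your outline is sound, though substantially longer than the paper's citation-based proof.
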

\begin{proof}
The case $\mathfrak{T} = \infty$ follows straightforwardly from combining Theorems 8.1 and 6.35 in \cite{auscher2024fundamental}. For the case $\mathfrak{T} < \infty$, existence follows from restriction from the case $\mathfrak{T} = \infty$, and uniqueness is easily obtained using Proposition \ref{prop:energyboundedinterval} along with \eqref{ellipticité A}.
\end{proof}

\section{Further properties arising from the divergence form}
Having taken the degenerate parabolic operator in divergence form allows us to automatically use cut-off techniques to prove additional properties, as we will see next.

\subsection{\texorpdfstring{$L^2$}{L2}-decay for the fundamental solution}
Given two subsets $E,F \subset \mathbb{R}^n$, we denote by $\mathrm{dist}(E,F)$ the Euclidean distance between these sets. An off-diagonal estimate holds for the fundamental solution as we will see in the next proposition.
\begin{prop}\label{prop:off-diagonal} Let $E,F \subset \mathbb{R}^n$ two measurable sets and let $d:=\mathrm{dist}(E,F)$. Then, there exist two constants $c=c(M,\nu,n)>0$ and $C=C(M,\nu,n)>0$ such that 
\begin{equation*}
    \left\| \Gamma(t,s)f  \right\|_{L^2_\omega(F)} \leq C e^{-c\frac{d^2}{t-s}} \left\|f \right\|_{L^2_\omega(E)},
\end{equation*}
for all $t>s$ and $f \in L^2_\omega(\mathbb{R}^n)$ with  support in $E$. The same statement is true for $\Tilde{\Gamma}(s,t)$.
\end{prop}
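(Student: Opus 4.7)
The approach is the Davies exponential perturbation technique. Write $d := \mathrm{dist}(E, F)$; if $d = 0$ the estimate is immediate from the uniform bound in Theorem~\ref{thm:FundSol}, so assume $d > 0$. For $f \in L^2_\omega(\mathbb{R}^n)$ supported in $E$, set $u(\tau) := \Gamma(\tau, s) f$ for $\tau \geq s$. Choose the $1$-Lipschitz, bounded function $\phi(x) := \min(\mathrm{dist}(x, E), d)$, so that $\phi = 0$ on $E$ and $\phi \equiv d$ on $F$, and for a parameter $\alpha > 0$ put $\eta := e^{\alpha\phi}$, which satisfies $1 \leq \eta \leq e^{\alpha d}$ and $|\nabla\eta| \leq \alpha\eta$ almost everywhere.

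The main step is to set $v := \eta u$ and apply Proposition~\ref{prop:energyboundedinterval} to $v$ on any bounded sub-interval $(s, T)$. To do so, I would first verify by unwinding the distributional brackets and using $\mathcal{H}u = 0$ tested against $\eta\varphi$ (with $\eta$ first mollified so that $\eta\varphi \in \mathcal{D}$ and then passing to the limit by dominated convergence) that $v$ satisfies
\begin{equation*}
\partial_t v = -\omega^{-1}\mathrm{div}_x(\omega \tilde f) + \tilde g \quad \text{in } \mathcal{D}'((s,T) \times \mathbb{R}^n),
\end{equation*}
with $\tilde f := \omega^{-1}(-A\nabla v + u\, A\nabla\eta)$ and $\tilde g := -\omega^{-1}(\nabla\eta) \cdot (A\nabla u)$. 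The bounds on $\eta, \nabla\eta$ combined with the regularity of $u$ supplied by Theorem~\ref{thm:FundSol} give $v \in L^1((s,T); H^1_\omega)$ with $\nabla v \in L^2((s,T); L^2_\omega)$, $\tilde f \in L^2((s,T); L^2_\omega)^n$, and $\tilde g \in L^1((s,T); L^2_\omega)$, so Proposition~\ref{prop:energyboundedinterval} applies with $\beta = 0$.

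Substituting $\nabla v = \eta\nabla u + u\nabla\eta$ in the energy identity, the cross terms in $u\nabla\eta \cdot \overline{u\nabla\eta}$ and $\eta u A\nabla\eta \cdot \overline{\nabla u}$ cancel and one is left with
\begin{equation*}
\|v(t)\|_{2,\omega}^2 - \|v(s)\|_{2,\omega}^2 = -2\mathrm{Re}\!\int_s^t\!\!\int_{\mathbb{R}^n}\! \eta^2 A\nabla u \cdot \overline{\nabla u}\, dx\, d\tau - 4\mathrm{Re}\!\int_s^t\!\!\int_{\mathbb{R}^n}\! \eta\bar u\, A\nabla u \cdot \nabla\eta\, dx\, d\tau.
\end{equation*}
Using \eqref{ellipticité A}, $|\nabla\eta| \leq \alpha\eta$, Cauchy-Schwarz, and Young's inequality, the right-hand side is bounded above by $\tfrac{4 M^2 \alpha^2}{\nu}\int_s^t \|\eta u\|_{2,\omega}^2\, d\tau$ after the resulting $\|\eta \nabla u\|_{2,\omega}^2$ terms are absorbed into the negative contribution from the first integral. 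Gronwall's lemma then gives $\|\eta u(t)\|_{2,\omega}^2 \leq e^{4 M^2 \alpha^2 (t-s)/\nu} \|\eta f\|_{2,\omega}^2$, and since $\eta = 1$ on $\mathrm{supp}\, f$ while $\eta \equiv e^{\alpha d}$ on $F$, this specializes to
\begin{equation*}
\|\Gamma(t,s) f\|_{L^2_\omega(F)} \leq \exp\!\left(-\alpha d + \tfrac{2M^2 \alpha^2}{\nu}(t-s)\right) \|f\|_{L^2_\omega(E)}.
\end{equation*}
Optimizing at $\alpha = \nu d / (4 M^2 (t - s))$ yields the claimed bound with $c = \nu / (8 M^2)$. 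The argument for $\Tilde{\Gamma}(s,t)$ is identical after reversing time and replacing $A$ with $A^\star$.

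The main technical obstacle is the rigorous derivation of the distributional PDE for $v$ and the verification that $\tilde f, \tilde g$ lie in the classes required by Proposition~\ref{prop:energyboundedinterval}; once that step is secured, what remains is a standard Davies-type optimization.
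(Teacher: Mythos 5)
Your proposal is correct and follows essentially the same Davies-type exponential perturbation argument as the paper: the paper multiplies $U=\Gamma(\cdot,s)f$ by $e^{\alpha\chi}$ with a \emph{smooth} cutoff $\chi$ ($\chi=1$ on $E$, $\chi=0$ on $F$, $|\nabla_x\chi|\lesssim 1/d$), which sidesteps the mollification step you flag for your Lipschitz weight $\min(\mathrm{dist}(\cdot,E),d)$, then derives the same energy identity via Proposition \ref{prop:energyboundedinterval}, absorbs the cross term by Young's inequality, applies Gr\"onwall, and optimizes in $\alpha$ exactly as you do. The remaining differences (sign convention for $\alpha$, the precise value of $c$) are cosmetic.
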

\begin{proof}
We follow \cite[Lemma 1]{davies1992heat}. We fix $s\in \mathbb{R}$. Using point A) of Theorem \ref{thm:FundSol}, we can assume that $d>0$. Let $\chi \in C^\infty(\mathbb{R}^n;[0,1])$ such that $\chi=1$ on $E$, $\chi=0$ on $F$ and $\left\|\nabla_x \chi \right\|_{L^\infty(\mathbb{R}^n)} \leq \frac{c}{d}$ with $c=c(n)>0$ is a constant. For all $x \in \mathbb{R}^n$, we set $\phi(x):=e^{\alpha \chi(x)}$ with $\alpha$ a negative constant to fix later. For all $t>s$, we set 
\begin{equation*}
    U(t):=\Gamma(t,s)f.
\end{equation*}
Fix $t>s$, we have $\phi U \in L^2((s,t);H^1_\omega (\mathbb{R}^n))$ and its distributional time derivative verifies
\begin{align*}  
    \partial_t(\phi U) = -\omega^{-1}\left ( A \nabla_x U \cdot \nabla_x \phi \right ) +\omega^{-1} \mathrm{div}_x(A \nabla_x U \phi) \ \ \mathrm{in} \ \mathcal{D'}((s,t)\times \mathbb{R}^n).
\end{align*}
Therefore, by Proposition \ref{prop:energyboundedinterval}, we have $ \tau \mapsto \left \| \phi U(\tau) \right \|^2_{2,\omega}$ is absolutely continuous and we can write the following energy equality:
\begin{align*}
     \|\phi U(t)   \|^2_{2,\omega}-\left \|\phi f  \right \|^2_{2,\omega}&= -2\mathrm{Re}\int_{s}^{t} \int_{\mathbb{R}^n} \omega^{-1}\left (  A(\tau,\cdot)\nabla_x U(\tau)\cdot \nabla_x \phi \right )  \overline{\phi U(\tau)} \ \mathrm d \omega \mathrm d \tau \\&
     \hspace{1.5cm} -2\mathrm{Re}\int_{s}^{t} \int_{\mathbb{R}^n} \omega^{-1}A(\tau,\cdot)\nabla_x U(\tau)\cdot \overline{\nabla_x (\phi U(\tau))} \phi  \ \mathrm d \omega \mathrm d \tau
    \\&= -2\mathrm{Re}\int_{s}^{t} \int_{\mathbb{R}^n} \omega^{-1}A(\tau,\cdot)\nabla_x U(\tau)\cdot \overline{\nabla_x U(\tau)} \phi^2 \ \mathrm d \omega \mathrm d \tau 
    \\ & \hspace{1.5cm}- 4\mathrm{Re} \int_{s}^{t} \int_{\mathbb{R}^n} \omega^{-1}\left ( A(\tau,\cdot)\nabla_x U(\tau)\cdot \nabla_x \phi \right )\overline{U}(\tau) \phi \ \mathrm d \omega \mathrm d \tau.
\end{align*}
Using \eqref{ellipticité A} with the fact that $\nabla_x \phi = \alpha \phi \nabla_x \chi $, we have
\begin{align*} 
       \|\phi U(t)   \|^2_{2,\omega}-\left \| \phi f  \right \|^2_{2,\omega} &\leq -2\nu \int_{s}^{t} \int_{\mathbb{R}^n} 
       | \nabla_x U  |^2 \phi^2 \ \mathrm d \omega \mathrm d \tau + \frac{4M | \alpha |c}{d} \int_{s}^{t} \int_{\mathbb{R}^n}  | \nabla_x U  | \phi \left | U \right |\phi \ \mathrm d \omega \mathrm d \tau
      \\& \leq \frac{2M^2  \alpha^2 c^2}{\nu d^2} \int_{s}^{t} \int_{\mathbb{R}^n} |\phi U  |^2  d \omega \mathrm d \tau.
\end{align*}
Therefore,  
\begin{align*}
  \|\phi U(t)   \|^2_{2,\omega} \leq \left \| \phi f  \right \|^2_{2,\omega} + \frac{2M^2\alpha^2 c^2}{\nu d^2} \int_{s}^{t}   \| \phi U(\tau)  \|^2_{2,\omega} \ \mathrm{d}\tau.
\end{align*}
As this is true for all $t>s$, by Grönwall's lemma, we have
\begin{align*}
       \|\phi U(t)  \|^2_{2,\omega} \leq e^{\frac{2M^2\alpha^2 c^2}{\nu d^2}(t-s)}\left \| \phi f  \right \|^2_{2,\omega}.
\end{align*}
Therefore, as $\phi=1$ on $F$ and $\phi=e^{\alpha}$ on $E$, we get
\begin{align*}
    \left\| \Gamma(t,s)f  \right\|^2_{L^2_\omega(F)} \leq   \|\phi U(t)  \|^2_{2,\omega} \leq e^{\frac{2M^2\alpha^2 c^2}{\nu d^2}(t-s)} \left\| \phi f \right\|^2_{L^2_\omega(E)}=e^{\frac{2M^2\alpha^2 c^2}{\nu d^2}(t-s)+2\alpha} \left\| f \right\|^2_{L^2_\omega(E)}.
\end{align*}
Taking $\alpha := - \frac{\nu}{2 M^2 c^2 (t-s)} d^2$, we get
\begin{align*}
    \left\| \Gamma(t,s)f  \right\|^2_{L^2_\omega(F)} \leq  e^{-\frac{\nu}{2M^2 c^2} \frac{d^2}{t-s}} \left\|f \right\|^2_{L^2_\omega(E)}.
\end{align*}
This property is stable by taking adjoints, so it holds for $\Tilde{\Gamma}(s,t)=\Gamma(t,s)^\star$.
\end{proof}
\begin{cor}\label{cor:L1}
    For all $f \in L^2_\omega(\mathbb{R}^n)$ with compact support, we have $\Gamma(t,s)f \in L^1_\omega(\mathbb{R}^n)$ for all $t\ge s$. The same statement is true for $\Tilde{\Gamma}(s,t)$.
\end{cor}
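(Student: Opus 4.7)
The plan is to combine the $L^2$ off-diagonal decay from Proposition \ref{prop:off-diagonal} with the doubling property \eqref{DoublingMuck} via a standard annular decomposition. Since $f$ has compact support, fix a center $x_0 \in \mathbb{R}^n$ and a radius $R > 0$ with $E := \operatorname{supp} f \subset B(x_0, R)$. Partition $\mathbb{R}^n$ into $F_0 := B(x_0, 2R)$ and the annuli $F_k := B(x_0, 2^{k+1}R) \setminus B(x_0, 2^k R)$ for $k \geq 1$, so that $\operatorname{dist}(E, F_k) \geq (2^k - 1)R$ when $k \geq 1$.

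For $t = s$, Theorem \ref{thm:FundSol}.A) gives $\Gamma(s,s)f = f$, which lies in $L^1_\omega$ by a single Cauchy--Schwarz against $\mathbb{1}_E$. For $t > s$, apply Cauchy--Schwarz on each annulus:
\begin{equation*}
\int_{F_k} |\Gamma(t,s)f| \, \mathrm{d}\omega \leq \omega(F_k)^{1/2} \, \|\Gamma(t,s)f\|_{L^2_\omega(F_k)}.
\end{equation*}
The factor $\omega(F_k)^{1/2}$ is controlled by iterating \eqref{DoublingMuck} (applied to balls): $\omega(F_k) \leq \omega(B(x_0, 2^{k+1}R)) \leq D^{k+1} \, \omega(B(x_0, R))$, i.e.\ at most geometric growth in $k$. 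For $k = 0$, the $L^2$ norm is bounded by $C \|f\|_{2,\omega}$ thanks to Theorem \ref{thm:FundSol}.A). For $k \geq 1$, Proposition \ref{prop:off-diagonal} yields
\begin{equation*}
\|\Gamma(t,s)f\|_{L^2_\omega(F_k)} \leq C \, e^{-c (2^k - 1)^2 R^2 / (t-s)} \, \|f\|_{L^2_\omega(E)}.
\end{equation*}

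Summing the resulting bounds over $k \geq 0$ gives an absolutely convergent series: the Gaussian factor $e^{-c \, 4^k R^2/(t-s)}$ crushes the geometric growth $D^{k/2}$ of the weight on dyadic balls, so $\Gamma(t,s)f \in L^1_\omega(\mathbb{R}^n)$ with a quantitative bound depending on $R$, $\omega(B(x_0,R))$, $t-s$ and $\|f\|_{2,\omega}$. The argument for $\tilde{\Gamma}(s,t)$ is identical since Proposition \ref{prop:off-diagonal} and Theorem \ref{thm:FundSol}.B) provide the same two ingredients for the backward family.

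There is no real obstacle here: the result is essentially a direct corollary of the $L^2$ off-diagonal estimate and the doubling property, the only slightly delicate point being the bookkeeping to ensure that the Gaussian decay in $k$ dominates the polynomial-in-$D^k$ growth of $\omega(F_k)^{1/2}$ uniformly on compact subsets of $\{t > s\}$.
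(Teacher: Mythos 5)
Your proposal is correct and follows essentially the same route as the paper: decompose $\mathbb{R}^n$ into a central ball plus dyadic annuli around the support of $f$, apply Cauchy--Schwarz on each piece, control the $L^2$ norms by the uniform bound of Theorem \ref{thm:FundSol} on the central piece and by Proposition \ref{prop:off-diagonal} on the annuli, and beat the (at most geometric) growth of $\omega$ on dyadic balls with the Gaussian decay. The only cosmetic difference is that the paper quantifies the weight growth via \eqref{MuckProportion} rather than by iterating \eqref{DoublingMuck}, which changes nothing in substance.
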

\begin{proof}
The result is obvious if $t=s$. We assume that $t>s$ and only prove the result for $\Gamma(t,s)$ as the proof is the same for $\Tilde{\Gamma}(s,t)$. Let $R>0$ such that $\mathrm{supp}(f)\subset B(0,R)$. For all $k\ge 0$, we set $C_k(R):=B(0,2^{k+1}R)\setminus B(0,2^{k}R)$. We have 
$$\int_{\mathbb{R}^n}  | \Gamma(t,s)f | \ \mathrm d \omega = \int_{B(0,R)}  | \Gamma(t,s)f | \ \mathrm d \omega + \sum_{k=0}^{\infty} \int_{C_k(R)}  | \Gamma(t,s)f | \ \mathrm d \omega.$$
For all $k \ge 0$, we have $\mathrm{dist}(\mathrm{supp}(f),C_k(R)) \sim 2^k R$. Therefore, by applying the Cauchy–Schwarz inequality together with Proposition \ref{prop:off-diagonal}, we obtain, for all $k\ge0$,
\begin{align*}
    \int_{C_k(R)}  | \Gamma(t,s)f | \ \mathrm d \omega &\le \sqrt{\omega(B(0,2^{k+1}R))} \times \|\Gamma(t,s)f \|_{L^2_\omega(C_k(R))}
    \\& \le C \sqrt{\omega(B(0,R))} \times  e^{-c \frac{4^k R^2}{t-s}} \times D^{\frac{1}{2}(k+1)},
\end{align*}
where we have used the doubling property \eqref{DoublingMuck} in the last line. Therefore, it follows that
\begin{equation*}
    \int_{\mathbb{R}^n}  | \Gamma(t,s)f | \ \mathrm d \omega < \infty.
\end{equation*}
\end{proof}
\subsection{Local weak solutions and Caccioppoli inequality} 
\subsubsection{Local weak solutions}
We say that $u$ is a local weak solution to the equation $\mathcal{H}u=0$ in an open set $\Omega=I \times \mathcal{O} \subset \mathbb{R}^{1+n}$, with $I \subset \mathbb{R}$ and $\mathcal{O} \subset \mathbb{R}^n$ open sets, if $u \in L^1_{\mathrm{loc}}(I; L^2_\omega(\mathcal{O})) $ with $\nabla_x u \in L^2(I; L^2_\omega(\mathcal{O}))$ and satisfies the equation $\mathcal{H}u=\partial_tu-\omega^{-1} \mathrm{div}_x( A(t,\cdot)\nabla_xu)=0$ in $\mathcal{D'}(\Omega)$ as in Definition \ref{def:duality} with $\mathcal{O}$ replacing $\mathbb{R}^n$, namely,
$$\iint_{\Omega} \left ( -u(t,x)  \partial_t \varphi(t,x) +\omega^{-1}(x) A(t,x)\nabla_x u(t,x) \cdot \nabla_x \varphi(t,x)  \right ) \ \mathrm{d}\mu(t,x) =0, \ \text{for all $\varphi \in  \mathcal{D}(\Omega)$}.$$
Local weak solutions to the equation $\mathcal{H}^\star v=0$ are defined similarly.
Whenever $\mathcal{O'}$ is an open set such that $\overline{\mathcal{O'}}\subset O $, then local weak solutions as above are continuous functions of time valued in $L^2_\omega(\mathcal{O'})$. This easily follows from Proposition \ref{prop:energyboundedinterval}. As a result, if $\mathcal{O'}$ is bounded, then we have
\begin{equation}\label{sup pour sol faibles}
        \sup_{t \in I} \left ( \supess_{\mathcal{O'}} \left | u(t,\cdot) \right | \right ) = \supess_{I \times \mathcal{O'}} \left | u \right |,
\end{equation}
This equality is proved in Appendix \ref{annexe}.

\subsubsection{Caccioppoli inequality} 
We present a Caccioppoli inequality for local weak solutions, which will be used later. The proof is included for completeness.
\begin{lem}\label{lem:Caccioppoli}
    Let $(t_0,x_0)\in \mathbb{R}\times \mathbb{R}^n$ and $R>0$. Let $u$ be a local weak solution of $\mathcal{H}u=0$ on a neighborhood (of type $I \times \mathcal{O}$) of $Q_{2R}(t_0,x_0)$. Then, we have the following localized energy inequality:
    \begin{equation*}
       \sup_{t_0-R'^2 \leq t \leq t_0 } \left\|u(t) \right\|^2_{L^2_\omega(B(x_0,R'))} + \int_{Q_{R'}(t_0,x_0)} \left | \nabla_x u \right |^2 \ \mathrm d\mu \leq \frac{C}{(2R-R')^2} \int_{Q_{2R}(t_0,x_0)} \left |  u \right |^2 \ \mathrm d\mu,
    \end{equation*}
    for all $R'\in (0,2R)$, where $C=C(n,M,\nu)>0$ is a constant.
\end{lem}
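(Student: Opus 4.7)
The plan is a classical energy (Caccioppoli) argument, combining a spatial cutoff $\eta$ with a temporal cutoff $\phi$ and applying the Lions-type identity from Proposition \ref{prop:energyboundedinterval} to $v := \phi \eta u$. After translating so that $(t_0, x_0) = (0, 0)$, I would fix $\eta \in C_c^\infty(\mathbb{R}^n; [0,1])$ with $\eta \equiv 1$ on $B(0, R')$, $\mathrm{supp}(\eta) \subset B(0, 2R)$ and $\|\nabla_x \eta\|_\infty \leq C/(2R-R')$, and $\phi \in C^\infty(\mathbb{R}; [0,1])$ nondecreasing with $\phi \equiv 0$ on $(-\infty, -(2R)^2]$ and $\phi \equiv 1$ on $[-R'^2, \infty)$; the elementary estimate $(2R)^2 - R'^2 \geq (2R-R')^2$ allows one to impose $\|\phi'\|_\infty \leq C/(2R-R')^2$.

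Using $\mathcal{H}u = 0$ together with the Leibniz identity $\phi\eta\,\mathrm{div}_x(A\nabla_x u) = \mathrm{div}_x(A\nabla_x v) - \mathrm{div}_x(\phi u A \nabla_x \eta) - \phi\nabla_x \eta \cdot A \nabla_x u$ in $\mathcal{D}'$, I would verify that $v$ satisfies a distributional equation of the form $\partial_t v = -\omega^{-1}\mathrm{div}_x(\omega F) + G$ on $(-(2R)^2, 0) \times \mathbb{R}^n$ with
\begin{equation*}
F = -\omega^{-1} A \nabla_x v + \omega^{-1} \phi u A \nabla_x \eta, \qquad G = \phi' \eta u - \omega^{-1} \phi \nabla_x \eta \cdot A \nabla_x u.
\end{equation*}
The bound $|\omega^{-1} A| \leq M$ from \eqref{ellipticité A}, the assumption $\nabla_x u \in L^2((-(2R)^2, 0); L^2_\omega(B(0,2R)))$ and the continuity in time of $u$ in $L^2_\omega(B(0,2R))$ (via \eqref{sup pour sol faibles} applied to a slightly larger ball contained in $\mathcal{O}$) together give $F \in L^2((-(2R)^2, 0); L^2_\omega(\mathbb{R}^n)^n)$ and $G \in L^1((-(2R)^2, 0); L^2_\omega(\mathbb{R}^n))$. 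Proposition \ref{prop:energyboundedinterval} (applied with $\beta = 0$, $\rho = \infty$, $\rho' = 1$) then yields $v \in C([-(2R)^2, 0]; L^2_\omega(\mathbb{R}^n))$ and, since $v(-(2R)^2) = 0$, the identity
\begin{equation*}
\|v(\sigma)\|_{2,\omega}^2 = 2\,\mathrm{Re}\int_{-(2R)^2}^{\sigma} \left(\langle F(t), \nabla_x v(t)\rangle_{2,\omega} + \langle G(t), v(t)\rangle_{2,\omega}\right)\, \mathrm{d}t
\end{equation*}
for every $\sigma \in [-R'^2, 0]$.

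Expanding the inner products, the term $-2\,\mathrm{Re}\langle \omega^{-1} A \nabla_x v, \nabla_x v\rangle_{2,\omega}$ is bounded above by $-2\nu\|\nabla_x v\|_{2,\omega}^2$ by the coercivity in \eqref{ellipticité A} and moves to the left-hand side; the remaining terms are of the form $\int |u||\nabla_x \eta||\nabla_x v|\, \mathrm{d}\omega$, $\int \eta |u||\nabla_x u||\nabla_x \eta|\,\mathrm{d}\omega$ and $\int |\phi\phi'|\eta^2|u|^2\,\mathrm{d}\omega$. Using the pointwise identity $\phi^2 \eta^2 |\nabla_x u|^2 \leq 2|\nabla_x v|^2 + 2\phi^2 |u|^2 |\nabla_x \eta|^2$ together with Young's inequality $2ab \leq \epsilon a^2 + \epsilon^{-1} b^2$ for a small $\epsilon$ proportional to $\nu$, one absorbs all terms involving $|\nabla_x v|^2$ back into the left-hand side to obtain
\begin{equation*}
\|v(\sigma)\|_{2,\omega}^2 + \nu \int_{-(2R)^2}^{\sigma} \|\nabla_x v\|_{2,\omega}^2\, \mathrm{d}t \leq C \int_{-(2R)^2}^{0}\int_{\mathbb{R}^n} \left(|\phi\phi'|\eta^2 + |\nabla_x \eta|^2\right)|u|^2\, \mathrm{d}\omega\, \mathrm{d}t,
\end{equation*}
whose right-hand side is dominated by $C(2R-R')^{-2}\iint_{Q_{2R}(0,0)}|u|^2\, \mathrm{d}\mu$ thanks to the bounds on $\phi'$ and $\nabla_x \eta$ and the supports of these weights. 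Since $\phi \equiv 1$ on $[-R'^2, 0]$ and $\eta \equiv 1$ on $B(0, R')$, we have $v = u$ on $Q_{R'}(0,0)$ and $\nabla_x v = \nabla_x u$ there, so taking the supremum over $\sigma \in [-R'^2, 0]$ in the first term and choosing $\sigma = 0$ in the gradient term yields the claimed inequality.

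The only nontrivial obstacle is justifying the energy identity for $v = \phi\eta u$ despite $u$ having no a priori time regularity: this is exactly the role of Proposition \ref{prop:energyboundedinterval}, which reduces the task to verifying the $L^2$- and $L^1$-in-time integrability of $F$ and $G$; everything else is algebraic bookkeeping of cross terms and careful tracking of the dependence of constants on $n$, $M$, $\nu$ through Young's inequality.
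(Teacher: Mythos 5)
Your proposal is correct and follows essentially the same route as the paper: cut off $u$ by a space--time cutoff (your product $\phi\otimes\eta$ versus the paper's single $\zeta$, an immaterial difference), identify the resulting right-hand side as admissible data $F,G$ for Proposition \ref{prop:energyboundedinterval} with $\beta=0$, and then run the standard ellipticity/Young absorption. The only (cosmetic) divergence is that you keep the time integrals truncated at $\sigma$ and absorb the $|\nabla_x v|^2$ contributions in one pass, whereas the paper first extends them to the full cylinder and therefore needs a two-step argument (gradient bound at $t=t_0$ first, then the sup bound); both are valid and yield the same constant dependence $C(n,M,\nu)$.
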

\begin{proof}
Fix $R'\in (0,2R)$ and let $0 \leq \zeta \leq 1$ be a smooth function such that $\zeta = 1$ on $Q_{R'+\frac{1}{4}(2R-R')}(t_0,x_0)$, $\zeta = 0$ outside $Q_{R'+\frac{1}{2}(2R-R')}(t_0,x_0)$ and verifying $$\left \| \partial_t \zeta  \right \|_{L^\infty(\mathbb{R}^{1+n})}+\left \| \nabla_x \zeta  \right \|^2_{L^\infty(\mathbb{R}^{1+n})} \leq \frac{c(n)}{(2R-R')^2} .$$ 
We set $v := \zeta u $. Then, $v \in L^2((t_0-4R^2,t_0);H^1_\omega(\mathbb{R}^n))$ and satisfies the following equation in $\mathcal{D}'((t_0-4R^2,t_0)\times \mathbb{R}^n)$ :
\begin{equation*}
    \partial_t v -\omega^{-1}\mathrm{div}_x(A(t,\cdot)\nabla_x v) = (\partial_t\zeta) u - \omega^{-1}A(t,\cdot)\nabla_x u \cdot \nabla_x \zeta - \omega^{-1}\mathrm{div}_x(A(t,\cdot) u \nabla_x \zeta).
\end{equation*}
For all $t \in (t_0-4R^2,t_0]$, we set $I_t:=[t_0-4R^2,t]$. Writing the energy equality provided by Proposition \ref{prop:energyboundedinterval}, we have for all $t \in [t_0-R'^2,t_0]$, 
\begin{align*}
    \left \| v(t) \right \|^2_{2,\omega} - \left \| v(t_0-4R^2) \right \|^2_{2,\omega}= 2 \mathrm{Re} &   \int_{I_t} - \langle \omega^{-1}A(s,\cdot) \nabla_x v, \nabla_x v\rangle_{2,\omega}+\langle \partial_t \zeta u , v\rangle_{2,\omega} \\&- \langle \omega^{-1}A(s,\cdot)\nabla_x u \cdot \nabla_x \zeta, v\rangle_{2,\omega}+\langle \omega^{-1}A(s,\cdot)u \nabla_x \zeta, \nabla_x v\rangle_{2,\omega} \  \mathrm d s.
\end{align*}
Since $v(t_0-4R^2)=0$, we have for all $t \in [t_0-R'^2,t_0]$, 
\begin{align*}
    \left \| v(t) \right \|^2_{2,\omega}+ 2\int_{I_t} \mathrm{Re} \ \langle \omega^{-1}A(s,\cdot) \nabla_x v, \nabla_x v\rangle_{2,\omega} \ \mathrm d s = 2\mathrm{Re}  \int_{I_t}   &\langle \partial_t \zeta u , v\rangle_{2,\omega} - \langle \omega^{-1}A(s,\cdot)\nabla_x u \cdot \nabla_x \zeta, v\rangle_{2,\omega} \\& +  \langle \omega^{-1}A(s,\cdot)u \nabla_x \zeta, \nabla_x v\rangle_{2,\omega} \  \mathrm d s.
\end{align*}
Using \eqref{ellipticité A}, we have for all $t \in [t_0-R'^2,t_0]$,
\begin{align*}
    \left \| v(t) \right \|^2_{2,\omega}+ 2 \nu \int_{I_t} \left \| \left ( \nabla_x v \right )(s) \right \|_{2,\omega}^2 \mathrm ds  &\leq \frac{2c(n)}{(2R-R')^2} \int_{Q_{2R}(t_0,x_0)} \left |  u \right |^2 \ \mathrm d\mu + 4M \int_{Q_{2R}(t_0,x_0)} \left |  \nabla_x \zeta \right | \left |   u \right |  \left |  \nabla_x v \right | \ \mathrm d\mu\\&\leq \frac{2c(n)}{(2R-R')^2} \int_{Q_{2R}(t_0,x_0)} \left |  u \right |^2 \ \mathrm d\mu + \frac{4M^2}{\nu} \int_{Q_{2R}(t_0,x_0)} \left |  u \right |^2 \left |  \nabla_x \zeta \right |^2 \ \mathrm d\mu \\& \hspace{5.2cm}+ \nu \int_{Q_{2R}(t_0,x_0)} \left |  \nabla_x v \right |^2 \ \mathrm d\mu.
\end{align*}
Hence, for all $t \in [t_0-R'^2,t_0]$,
\begin{align}\label{Inégalité Cacc}
 \left \| v(t) \right \|^2_{2,\omega}+ 2 \nu \int_{I_t} \left \| \left ( \nabla_x v \right )(s) \right \|_{2,\omega}^2 \mathrm ds \leq \frac{2c(n)+4M^2/\nu }{(2R-R')^2} \int_{Q_{2R}(t_0,x_0)} \left |  u \right |^2 \ \mathrm d\mu + \nu \int_{Q_{2R}(t_0,x_0)} \left | \nabla_x v \right |^2 \ \mathrm d\mu.
\end{align}
When $t=t_0$ and ignoring $\left \| v(t_0) \right \|^2_{2,\omega}$, we have 
\begin{align*}
    2\nu \int_{Q_{2R}(t_0,x_0)} \left | \nabla_x v \right |^2 \ \mathrm d\mu   \leq \frac{ 2c(n)+4M^2/\nu }{(2R-R')^2} \int_{Q_{2R}(t_0,x_0)} \left |  u \right |^2 \ \mathrm d\mu + \nu \int_{Q_{2R}(t_0,x_0)} \left | \nabla_x v \right |^2 \ \mathrm d\mu.
\end{align*}
Therefore,
\begin{align*}
     \nu \int_{Q_{2R}(t_0,x_0)} \left | \nabla_x v \right |^2 \ \mathrm d\mu \leq \frac{2c(n)+4M^2/\nu }{(2R-R')^2} \int_{Q_{2R}(t_0,x_0)} \left |  u \right |^2 \ \mathrm d\mu.
\end{align*}
Since $v = u$ on $Q_{R'}(t_0,x_0)$, we have 
\begin{align}\label{Cacc1}
      \int_{Q_{R'}(t_0,x_0)} \left | \nabla_x u \right |^2 \ \mathrm d\mu \leq \frac{1/\nu\left (2c(n)+4M^2/\nu \right )}{(2R-R')^2} \int_{Q_{2R}(t_0,x_0)} \left |  u \right |^2 \ \mathrm d\mu.
\end{align}
We go back to \eqref{Inégalité Cacc} and this time we ignore the second term of the left hand side. Thus, we have 
$$\sup_{t_0-R'^2 \leq t \leq t_0 } \left\|u(t) \right\|^2_{L^2_\omega(B(x_0,R'))} \leq \frac{2c(n)+4M^2/\nu }{(2R-R')^2} \int_{Q_{2R}(t_0,x_0)} \left |  u \right |^2 \ \mathrm d\mu + \nu \int_{Q_{2R}(t_0,x_0)} \left | \nabla_x v \right |^2 \ \mathrm d\mu.$$
Using the inequality $\left|\nabla_x v \right|^2 \leq 2 \left ( \left|\nabla_x \zeta \right|^2 \left|u \right|^2+ \left|\nabla_x u \right|^2 \left|\zeta \right|^2 \right )$, we have 
\begin{align*}
    \sup_{t_0-R'^2 \leq t \leq t_0 } \left\|u(t) \right\|^2_{L^2_\omega(B(x_0,R'))} \leq \frac{C(n,M,\nu)}{(2R-R')^2} \int_{Q_{2R}(t_0,x_0)} \left |  u \right |^2 \ \mathrm d\mu + 2\nu \int_{Q_{R'+\frac{1}{2}(2R-R')}(t_0,x_0)} \left | \nabla_x u \right |^2 \ \mathrm d\mu.
\end{align*}
Using \eqref{Cacc1}, we have 
$$\int_{Q_{R'+\frac{1}{2}(2R-R')}(t_0,x_0)} \left | \nabla_x u \right |^2 \ \mathrm d\mu \leq  4\frac{C(n,M,\nu)}{(2R-R')^2} \int_{Q_{2R}(t_0,x_0)} \left |  u \right |^2 \ \mathrm d\mu.$$
Therefore, 
\begin{equation}\label{Cacc2}
    \sup_{t_0-R'^2 < t \leq t_0 } \left\|u(t) \right\|^2_{2,\omega} \leq \frac{\Tilde{C}(n,M,\nu) }{(2R-R')^2} \int_{Q_{2R}(t_0,x_0)} \left |  u \right |^2 \ \mathrm d\mu.
\end{equation}
Having established \eqref{Cacc1} and \eqref{Cacc2}, our lemma is therefore proved.
\end{proof}
\begin{rem}[A variant of the Caccioppoli Inequality]\label{rem:varCaccioppoli}
Let $(t_0,x_0)\in \mathbb{R}\times \mathbb{R}^n$, $0 < \rho_1 < \rho_2 < \rho_3 < \rho_4$, $R>0$ and $\psi \in L^2_\omega(\mathbb{R}^n)$ with $\mathrm{supp}(\psi) \subset B(x_0,\rho_1 R)$. We set $\Sigma:=Q_{\rho_3 R}(t_0, x_0) \setminus Q_{\rho_2 R}(t_0, x_0)$ and $\Sigma':=Q_{\rho_4 R}(t_0, x_0) \setminus Q_{\rho_1 R}(t_0, x_0)$. Using an adapted cut-off function $\zeta$ as before with the fact that $\lim_{s \to t_0^{^-}} \| \zeta(s) \Tilde{\Gamma}(s,t_0)\psi \|^2_{2,\omega}=0 $ by Proposition \ref{prop:off-diagonal}, we can similarly prove the following inequality
\begin{equation}\label{Variante Caccioppoli}
    \int_{\Sigma} | \nabla_x (\Tilde{\Gamma}(\cdot,t_0)\psi) |^2 \ \mathrm{d\mu} \leq \frac{C}{\min(\rho_2 - \rho_1, \rho_4 - \rho_3)} \frac{1}{R^2} \int_{\Sigma'} | \Tilde{\Gamma}(\cdot,t_0)\psi |^2 \ \mathrm{d\mu},
\end{equation}
where $C = C(n,M, \nu) > 0$ is a constant.
\end{rem}

\subsection{Conservation property}
In this section, we establish the conservation property, which states that the fundamental solution preserves constants. At this level of generality, the decay estimate shows that these operators map bounded functions to locally square-integrable functions.
\begin{prop}[Conservation property]\label{prop:consproperty}
    For all $ t \ge s$, we have $\Gamma(t,s)1=1$ in $L^2_{\omega,\mathrm{loc}}(\mathbb{R}^n)$, that is, for all $\psi \in L^2_\omega(\mathbb{R}^n)$ with compact support
    $$ \int_{\mathbb{R}^n}(\Tilde{\Gamma}(s,t)\psi)(x) \  \mathrm{d}\omega(x)=\int_{\mathbb{R}^n}\psi(x) \ \mathrm{d}\omega(x). $$
    Similarly, we have $\Tilde{\Gamma}(s,t)1=1$ in $L^2_{\omega,\mathrm{loc}}(\mathbb{R}^n)$.
\end{prop}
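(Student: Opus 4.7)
The plan is to establish the second identity ($\int_{\mathbb{R}^n}(\tilde{\Gamma}(s,t)\psi)\, d\omega = \int_{\mathbb{R}^n}\psi\, d\omega$ for every compactly supported $\psi \in L^2_\omega$), which by definition \emph{is} the content of $\Gamma(t,s)1=1$ in $L^2_{\omega,\mathrm{loc}}$ via the adjointness $\Gamma(t,s)^\star = \tilde\Gamma(s,t)$ from Theorem \ref{thm:FundSol}(C). The companion statement $\tilde\Gamma(s,t)1=1$ will follow by the symmetric argument with $\mathcal{H}$ in place of $\mathcal{H}^\star$. Fix $\psi$ with $\mathrm{supp}(\psi) \subset B(0,R_0)$, set $v(\tau):=\tilde\Gamma(\tau,t)\psi$, and introduce a smooth radial cut-off $\chi_R$ with $\chi_R=1$ on $B(0,R)$, $\chi_R=0$ outside $B(0,2R)$, and $\|\nabla_x \chi_R\|_\infty \leq C/R$.

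The first step is to exploit the weak formulation of $\mathcal{H}^\star v = 0$ tested against space--time functions of the form $\chi_R(x)\phi(\tau)$, $\phi \in \mathcal{D}((-\infty,t))$. Since $v \in C((-\infty,t];L^2_\omega(\mathbb{R}^n))$ by Theorem \ref{thm:FundSol}(B), the map $\tau \mapsto F(\tau):=\langle v(\tau),\chi_R\rangle_{2,\omega}$ is continuous, and the weak formulation identifies its distributional derivative with the locally integrable function $G(\tau):=\int_{\mathbb{R}^n} A^\star(\tau,x)\nabla_x v(\tau,x)\cdot \nabla_x \chi_R(x)\, dx$. The fundamental theorem of calculus, applied between $s$ and $t$ and using $v(t)=\psi$, then produces the identity
\begin{equation*}
    \int_{\mathbb{R}^n}\psi\,\chi_R\, d\omega - \int_{\mathbb{R}^n}\tilde\Gamma(s,t)\psi \cdot \chi_R\, d\omega = \int_s^t G(\tau)\, d\tau.
\end{equation*}
By Corollary \ref{cor:L1} we have $\tilde\Gamma(s,t)\psi \in L^1_\omega(\mathbb{R}^n)$, so dominated convergence (together with $\chi_R=1$ on $\mathrm{supp}(\psi)$ for $R>R_0$) will drive the left-hand side to $\int\psi\,d\omega - \int \tilde\Gamma(s,t)\psi\,d\omega$ as $R\to\infty$.

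The technical heart is to show that $\int_s^t G(\tau)\,d\tau \to 0$. Cauchy--Schwarz and the ellipticity bound \eqref{ellipticité A} give
\begin{equation*}
    \Bigl| \int_s^t G(\tau)\, d\tau \Bigr| \leq \frac{CM}{R}\sqrt{(t-s)\,\omega(B(0,2R))}\,\Bigl(\int_s^t \|\nabla_x v(\tau)\|_{L^2_\omega(B(0,2R)\setminus B(0,R))}^2\, d\tau\Bigr)^{1/2}.
\end{equation*}
I plan to control the last factor by a Caccioppoli-type energy identity on $[s,t]$ with an auxiliary spatial cut-off $\zeta$ equal to $1$ on $B(0,2R)\setminus B(0,R)$, vanishing on $B(0,R/2)\cup B(0,3R)^c$, and satisfying $\|\nabla_x \zeta\|_\infty \leq C/R$: testing the equation satisfied by $v$ against $\zeta^2 \overline{v}$ and invoking Proposition \ref{prop:energyboundedinterval} (exactly as in the derivation of Lemma \ref{lem:Caccioppoli} and Remark \ref{rem:varCaccioppoli}) will yield
\begin{equation*}
    \int_s^t \int_{B(0,2R)\setminus B(0,R)} |\nabla_x v|^2\, d\omega\, d\tau \leq C\|\zeta v(s)\|_{2,\omega}^2 + \frac{C}{R^2}\int_s^t \|v(\tau)\|_{L^2_\omega(\mathrm{supp}\,\zeta)}^2\, d\tau.
\end{equation*}
Crucially, for $R\geq 4R_0$ the set $\mathrm{supp}(\zeta)$ is at distance at least $R/4$ from $\mathrm{supp}(\psi)$, so Proposition \ref{prop:off-diagonal} applied to $\tilde\Gamma(\cdot,t)\psi$ delivers $\|v(\tau)\|_{L^2_\omega(\mathrm{supp}\,\zeta)} \leq C e^{-cR^2/(t-\tau)}\|\psi\|_{2,\omega}$. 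For $R^2 \geq t-s$ this majorant is bounded above by $Ce^{-cR^2/(t-s)}\|\psi\|_{2,\omega}$ uniformly in $\tau \in (s,t)$, so the right-hand side is dominated by $C(t-s)e^{-cR^2/(t-s)}\|\psi\|_{2,\omega}^2$. Combining with the at-most-polynomial growth of $\omega(B(0,2R))$ coming from \eqref{MuckProportion} and \eqref{DoublingMuck}, the Gaussian factor overwhelms every polynomial term and the error tends to $0$.

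I expect the main subtlety to be the precise geometric arrangement of the intermediate cut-off $\zeta$: its gradient must be supported in annuli at distance comparable to $R$ from $\mathrm{supp}(\psi)$ so that Proposition \ref{prop:off-diagonal} produces genuine Gaussian decay, yet $\zeta$ must dominate $\chi_R$ on the annulus $B(0,2R)\setminus B(0,R)$ in order for the energy inequality to control $\nabla_x v$ on the support of $\nabla_x \chi_R$. Once this geometric calibration is fixed, the remaining steps are routine applications of Proposition \ref{prop:energyboundedinterval} and Young's inequality.
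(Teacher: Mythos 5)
Your proposal is correct and follows essentially the same route as the paper: the same key identity (tested against a cut-off $\chi_R$), the same error term killed by Cauchy--Schwarz, the Caccioppoli variant of Remark \ref{rem:varCaccioppoli}, the off-diagonal decay of Proposition \ref{prop:off-diagonal}, and the polynomial growth of $\omega(B(0,2R))$ from \eqref{MuckProportion}, with Corollary \ref{cor:L1} handling the limit on the main terms. The only cosmetic difference is that you derive the identity directly from the weak formulation of $\mathcal{H}^\star v=0$ tested against $\chi_R\otimes\phi$, whereas the paper obtains the same identity by inserting the time-constant solution $u=\chi_R$ into the representation formula of Proposition \ref{prop: Pb de Cauchy borné}; these are dual versions of the same computation (and note that in your energy inequality the boundary term should be $\|\zeta v(t)\|_{2,\omega}^2=\|\zeta\psi\|_{2,\omega}^2=0$ rather than $\|\zeta v(s)\|_{2,\omega}^2$, a harmless bookkeeping slip since the latter is also exponentially small).
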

\begin{proof}
The proof is the same for both fundamental solutions of $\mathcal{H}$ and $\mathcal{H}^\star$ and we only prove the result for $\Gamma(t,s)$. We fix $\chi \in \mathcal{D}(\mathbb{R}^n)$ such that $\chi=1$ on $B(0,1)$ and $\mathrm{supp}(\chi)\subset B(0,2)$. For all $R>0$ and $x \in \mathbb{R}^n$, we set $\chi_{R}(x):= \chi(\frac{x}{R})$. The result is obvious if $t=s$ and we assume that $t>s$. For all $R>0$, the unique solution $u \in L^1((s,t);H^1_\omega(\mathbb{R}^n))$ with $\int_s^t\|\nabla_x u(t)\|^2_{2,\omega}\, \mathrm{d}t<\infty$ to the  following Cauchy problem : 
\begin{align*}
\left\{
    \begin{array}{ll}
        \partial_t u -\omega^{-1} \mathrm{div}_x(A(t,x)\nabla_x u)  = - \omega^{-1} \mathrm{div}_x(A(t,x) \nabla_x \chi_{R}) \ \mathrm{in} \  \mathcal{D}'((s,t)\times \mathbb{R}^n), \\
        u(\tau) \rightarrow \chi_{R} \  \mathrm{ in } \ \mathcal{D'}(\mathbb{R}^n) \ \mathrm{as} \ \tau \rightarrow s^+
    \end{array}
\right.
\end{align*} 
is the time-constant $u=\chi_{R}$ on $(s,t)$. Using the representation at time $t$ in Proposition \ref{prop: Pb de Cauchy borné}, we have for all $\psi \in L^2_\omega(\mathbb{R}^n)$ with compact support, 
\begin{align*}
    \langle \chi_{R}, \psi \rangle_{2,\omega} &= \langle \Gamma(t,s)\chi_{R}, \psi \rangle_{2,\omega}+ \int_{s}^{t} \langle \omega ^{-1} A(\tau,\cdot) \nabla_x \chi_{R}, \nabla_x \Tilde{\Gamma}(\tau,t)\psi \rangle_{2,\omega}\  \mathrm d \tau\\
    &= \langle \chi_{R}, \Tilde{\Gamma}(s,t)\psi \rangle_{2,\omega}+ \int_{s}^{t} \langle \omega ^{-1} A(\tau,\cdot) \nabla_x \chi_{R}, \nabla_x \Tilde{\Gamma}(\tau,t)\psi \rangle_{2,\omega}\  \mathrm d \tau.
\end{align*}
When $R \to \infty$, the term in the left hand side tends to $\int_{\mathbb{R}^n}\overline{\psi(x)} \ \mathrm{d}\omega(x)$ and the first term in the right hand side tends to $\int_{\mathbb{R}^n}\overline{(\Tilde{\Gamma}(s,t)\psi)(x)} \ \mathrm{d}\omega(x)$ by Corollary \ref{cor:L1}. It remains to show that the second term in the right hand side tends to $0$ as $R \to \infty$. For $R>1$, we have
\begin{align*}
    \int_{s}^{t} |\langle \omega ^{-1} A(\tau,\cdot) \nabla_x \chi_{R}, \nabla_x \Tilde{\Gamma}(\tau,t)\psi \rangle_{2,\omega} | \  \mathrm d \tau &\leq M \int_{s}^{t} \|  \nabla_x \chi_{R} \|_{2,\omega} \| \nabla_x \Tilde{\Gamma}(\tau,t)\psi \|_{L^2_\omega(B(0,2R)\setminus B(0,R))} \  \mathrm d \tau
    \\& \leq \frac{M}{R} (t-s)^{1/2} \omega(B(0,2R))^{1/2}  \|  \nabla_x \chi \|_{L^\infty(\mathbb{R}^n)} \times I_R,
\end{align*}
with $I_R:=\left ( \int_{s}^{t}   \| \nabla_x \Tilde{\Gamma}(\tau,t)\psi \|^2_{L^2_\omega(B(0,2R)\setminus B(0,R))} \  \mathrm d \tau \right )^{1/2}$. Using \eqref{MuckProportion}, we have 
\begin{equation}\label{taille boule}
    \omega(B(0,2R))^{1/2}= \omega(B(0,2))^{1/2} \left ( \frac{\omega(B(0,2R))}{\omega(B(0,2))} \right )^{1/2} \leq \sqrt{\beta}\  \omega(B(0,2))^{1/2} R^{\frac{n}{4\eta}}.
\end{equation}
Using the variant of the Caccioppoli inequality stated in \eqref{Variante Caccioppoli}, for $R>1$ large enough, we have 
$$I_R \leq C  \left(1+\frac{1}{R^2} \right ) \left ( \int_{s-1}^{t}   \|  \Tilde{\Gamma}(\tau,t)\psi \|^2_{L^2_\omega(B(0,3R)\setminus B(0,R/2))} \  \mathrm d \tau \right )^{1/2}. $$
For $R>1$ large enough, we have $\mathrm{dist}(\mathrm{supp}(\psi),B(0,3R)\setminus B(0,R/2)) \sim R$. Thus, using Proposition \ref{prop:off-diagonal}, we get
\begin{equation}\label{IR}
    I_R \leq \Tilde{C} \sqrt{t-s+1} \ \|  \psi \|_{2,\omega} \left(1+\frac{1}{R^2} \right ) e^{-\frac{\Tilde{c}}{t-s+1}R^2}.
\end{equation}
Combining \eqref{IR} and \eqref{taille boule}, we have as desired
$$\int_{s}^{t} |\langle \omega ^{-1} A(\tau,\cdot) \nabla_x \chi_{R}, \nabla_x \Tilde{\Gamma}(\tau,t)\psi \rangle_{2,\omega} | \  \mathrm d \tau \to 0 \ \text{when} \ R \to \infty.$$
\end{proof}

\section{Gaussian upper bounds and Moser’s \texorpdfstring{$L^2$}{L2}-\texorpdfstring{$L^\infty$}{} estimates}\label{section 4}
In this section, we prove the equivalence between Gaussian upper bounds for $\mathcal{H}$ and Moser’s $L^2$-$L^\infty$ estimates for $\mathcal{H}$ and $\mathcal{H}^\star $. We will first define these notions rigorously.

\subsection{Gaussian upper bounds and the generalized fundamental solution}
\begin{defn} We say that $\mathcal{H}$ has Gaussian upper bounds if, for all $t > s$, $\Gamma(t,s)$ is an integral operator with a kernel $\Gamma(t,x;s,y)$ satisfying a pointwise Gaussian upper bound, that is
\begin{equation}\label{pgb}
    \left | \Gamma(t,x;s,y) \right | \leq \frac{K_0}{\sqrt{\omega_{t-s}(x)}\sqrt{\omega_{t-s}(y)}} e^{-k_0 \frac{|x-y|^2}{t-s}},
\end{equation}
for almost every $(x,y) \in \mathbb{R}^{2n}$ and where $K_0 > 0$ and $k_0 > 0$ are constants independent of $t$, $s$, $x$, and $y$. The function $\Gamma(t,x;s,y)$ is referred to as the generalized fundamental solution of $\mathcal{H}$. The definition for $\mathcal{H}^\star$ is analogous, replacing $\Gamma(t,s)$ with $\Tilde{\Gamma}(s,t)$ and $\Gamma(t,x;s,y)$ with $\Tilde{\Gamma}(s,y;t,x)$.
\end{defn}
\begin{lem}\label{lem:Cruz-Uribe et Rios}
    The factor $\frac{1}{\sqrt{\omega_{t-s}(x)}\sqrt{\omega_{t-s}(y)}}$ appearing in \eqref{pgb} above may be replaced by one of 
    \begin{equation*}
        \frac{1}{\omega_{t-s}(x) }, \ \ \frac{1}{\omega_{t-s}(y) }, \ \ \frac{1}{\max(\omega_{t-s}(x) ,\omega_{t-s}(y))},
    \end{equation*}
    and the constants $K_0$ and $k_0$ in \eqref{pgb} are replaced respectively by $\Tilde{K}_0=\Tilde{K}_0(K_0,k_0,D)>0$ and $\frac{k_0}{2}$.
\end{lem}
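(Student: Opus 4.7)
The plan is to show that the three alternative factors can all be bounded in terms of $\frac{1}{\sqrt{\omega_{t-s}(x)\omega_{t-s}(y)}}$ after paying only a sub-Gaussian price, which can then be absorbed into the exponential factor $e^{-k_0\,|x-y|^2/(t-s)}$ at the cost of halving $k_0$. The whole argument reduces to a single elementary comparison between $\omega_{t-s}(x)$ and $\omega_{t-s}(y)$ coming from the doubling property \eqref{DoublingMuck}.

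Set $r=\sqrt{t-s}$ and $d=|x-y|$. Since $B(y,r)\subset B(x,r+d)$, and since $r+d\leq 2^k r$ for $k:=\lceil \log_2(1+d/r)\rceil$, iterating \eqref{DoublingMuck} gives
\begin{equation*}
\omega_{t-s}(y)\leq D^{\,k}\,\omega_{t-s}(x)\leq C\,(1+d/r)^{\log_2 D}\,\omega_{t-s}(x),
\end{equation*}
with a constant $C=C(D)>0$. The same bound holds with $x$ and $y$ interchanged. The key inequality is then that for any $\epsilon>0$ and any $\alpha\geq 0$, one has $(1+u)^{\alpha}\leq C_{\alpha,\epsilon}\,e^{\epsilon u^2}$ for every $u\geq 0$ (trivial on $[0,1]$ and coming from $\log(1+u)\ll u^2$ for $u\geq 1$). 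Applying this with $u=d/r$, $\alpha=(\log_2 D)/2$ and $\epsilon=k_0/2$ yields
\begin{equation*}
\sqrt{\omega_{t-s}(x)/\omega_{t-s}(y)}\;\leq\;C\,e^{(k_0/2)\,d^{2}/r^{2}},
\end{equation*}
and symmetrically for the ratio $\omega_{t-s}(y)/\omega_{t-s}(x)$.

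Now write $\frac{1}{\sqrt{\omega_{t-s}(x)\omega_{t-s}(y)}}=\frac{1}{\omega_{t-s}(x)}\sqrt{\omega_{t-s}(x)/\omega_{t-s}(y)}$ and insert the comparison above into \eqref{pgb}:
\begin{equation*}
\left|\Gamma(t,x;s,y)\right|\;\leq\;\frac{K_0\,C}{\omega_{t-s}(x)}\;e^{(k_0/2)\,d^{2}/r^{2}}\,e^{-k_0\,d^{2}/r^{2}}\;=\;\frac{\widetilde{K}_0}{\omega_{t-s}(x)}\,e^{-(k_0/2)\,|x-y|^{2}/(t-s)},
\end{equation*}
which is the first claimed replacement. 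Exchanging the roles of $x$ and $y$ gives the version with $\omega_{t-s}(y)$ in the denominator. Finally, since both inequalities hold simultaneously, taking the better of the two bounds at each pair $(x,y)$ gives
\begin{equation*}
\left|\Gamma(t,x;s,y)\right|\;\leq\;\frac{\widetilde{K}_0}{\max\bigl(\omega_{t-s}(x),\omega_{t-s}(y)\bigr)}\,e^{-(k_0/2)\,|x-y|^{2}/(t-s)},
\end{equation*}
which covers the remaining case.

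There is no real obstacle here; the only mildly delicate point is to make sure the polynomial loss coming from iterating the doubling constant, namely $(1+d/r)^{\log_2 D}$, is genuinely absorbable by the Gaussian factor. This is guaranteed by the elementary estimate $(1+u)^{\alpha}\leq C_{\alpha,\epsilon}\,e^{\epsilon u^{2}}$, which forces precisely the announced halving of the Gaussian constant from $k_0$ to $k_0/2$.
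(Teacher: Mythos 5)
Your proof is correct and follows essentially the same route as the paper: the paper simply cites the key uniform bound $\sqrt{\omega_{t-s}(y)/\omega_{t-s}(x)}\,e^{-(k_0/2)|x-y|^2/(t-s)}\leq C(D,k_0)$ from Cruz-Uribe and Rios, whereas you prove that same bound directly by iterating the doubling property and absorbing the resulting polynomial factor $(1+|x-y|/\sqrt{t-s})^{\log_2 D}$ into the Gaussian. The details (the inclusion $B(y,r)\subset B(x,r+d)$, the count of doublings, and the elementary absorption inequality) all check out, and the resulting constant depends only on $K_0$, $k_0$, $D$ as required.
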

\begin{proof}
See \cite[Rem. 3]{cruz2014corrigendum}, where it is proven that there is a constant $C=C(D,k_0)>0$ verifying
\begin{equation*}
    \frac{\sqrt{\omega_{t-s}(y)}}{\sqrt{\omega_{t-s}(x)}} \exp\left( -\frac{k_0}{2} \frac{|x-y|^2}{t-s} \right) \leq C , \ \text{for all $x, y \in \mathbb{R}^n$ and $t > s$.}
\end{equation*}
This uniform bound provides all the required bounds in Lemma \ref{lem:Cruz-Uribe et Rios}.   
\end{proof}
\begin{prop}[Properties of the generalized fundamental solution]\label{prop:solfundgeneralisée}
    $\mathcal{H}$ has Gaussian upper bounds if and only if $\mathcal{H}^\star$ does. In this case, the following properties hold for all $t>s$:
    \begin{enumerate}
        \item (Adjointess property) For almost every $(x,y) \in \mathbb{R}^{2n}$, we have
$$ \Tilde{\Gamma}(s,y;t,x) = \overline{\Gamma(t,x;s,y)}. $$
        \item (Chapman-Kolmogorov identities) If $t>r>s$, then for almost every $(x,y)\in \mathbb{R}^{2n}$, we have
        $$ \int_{\mathbb{R}^n} \Gamma(t,x;r,z) \Gamma(r,z;s,y) \ \mathrm{d}\omega(z)= \Gamma(t,x;s,y). $$
        \item (Conservation property) For almost every $x\in \mathbb{R}^n$, we have $$ \int_{\mathbb{R}^n} \Gamma(t,x;s,y) \ \mathrm{d}\omega(y)=1. $$
    \end{enumerate}
\end{prop}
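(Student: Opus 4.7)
The plan is to reduce all four statements to the operator-level identities already established: the adjointness $\Gamma(t,s)^\star = \Tilde{\Gamma}(s,t)$ from Theorem \ref{thm:FundSol}(C), the operator Chapman--Kolmogorov identity from Theorem \ref{thm:FundSol}(D), and the conservation property of Proposition \ref{prop:consproperty}. The translation from operator identities to pointwise kernel identities will rest on Fubini-type manipulations, whose absolute integrability is supplied by the Gaussian upper bound \eqref{pgb} together with the doubling property \eqref{DoublingMuck} of $\omega$ (in the form given in Lemma \ref{lem:Cruz-Uribe et Rios}).

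For the equivalence in (1) and the adjointness (2), I would start from the sesquilinear adjoint relation $\langle \Gamma(t,s)\phi,\psi\rangle_{2,\omega}=\langle \phi, \Tilde{\Gamma}(s,t)\psi\rangle_{2,\omega}$ valid for every $\phi,\psi\in L^2_\omega(\mathbb{R}^n)$. Assuming $\Gamma(t,s)$ has kernel $\Gamma(t,x;s,y)$ (with respect to $\mathrm{d}\omega(y)$) satisfying \eqref{pgb}, I would expand the left-hand side as a double integral, apply Fubini (justified by the Gaussian bound and Lemma \ref{lem:Cruz-Uribe et Rios}), and read off that $\Tilde{\Gamma}(s,t)$ is integral with kernel $\overline{\Gamma(t,x;s,y)}$ in the variables $(y,x)$ with respect to $\mathrm{d}\omega(x)$. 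Because the right-hand side of \eqref{pgb} is symmetric under the exchange of $x$ and $y$, this kernel inherits the same Gaussian bound; the converse implication $\mathcal{H}^\star\Rightarrow\mathcal{H}$ is identical by symmetry, giving both (1) and (2) at once.

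For the Chapman--Kolmogorov identity (3), apply the operator identity $\Gamma(t,s)=\Gamma(t,r)\Gamma(r,s)$ to an arbitrary compactly supported $f\in L^2_\omega(\mathbb{R}^n)$, write out each application via the kernel, and use Fubini to rewrite the double integral as a single kernel acting on $f$:
\begin{equation*}
\int_{\mathbb{R}^n}\!\Gamma(t,x;s,y)f(y)\,\mathrm{d}\omega(y) = \int_{\mathbb{R}^n}\!\Big(\!\int_{\mathbb{R}^n}\!\Gamma(t,x;r,z)\Gamma(r,z;s,y)\,\mathrm{d}\omega(z)\Big)f(y)\,\mathrm{d}\omega(y).
\end{equation*}
Arbitrariness of $f$ yields the a.e.\ identity. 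For the conservation property (4), I would apply Proposition \ref{prop:consproperty} to $\Tilde{\Gamma}(s,t)$, namely $\int(\Tilde{\Gamma}(s,t)\psi)\,\mathrm{d}\omega=\int\psi\,\mathrm{d}\omega$ for $\psi\in L^2_\omega(\mathbb{R}^n)$ of compact support, substitute the kernel $(\Tilde{\Gamma}(s,t)\psi)(y)=\int\overline{\Gamma(t,x;s,y)}\psi(x)\,\mathrm{d}\omega(x)$ obtained in (2), swap the order of integration by Fubini, and conclude from the arbitrariness of $\psi$ that $\int\overline{\Gamma(t,x;s,y)}\,\mathrm{d}\omega(y)=1$ a.e., whence (4) by complex conjugation.

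The main (essentially technical) obstacle is justifying each Fubini step. This requires checking that the double integrands are absolutely integrable against the relevant product measures. The key ingredient is that the Gaussian factor in \eqref{pgb}, paired with the normalization $1/\sqrt{\omega_{t-s}(x)\omega_{t-s}(y)}$, integrates in one variable to produce a bounded (or at most polynomially growing) function of the other, thanks to Lemma \ref{lem:Cruz-Uribe et Rios} and the $A_2$-doubling estimates \eqref{MuckProportion}--\eqref{DoublingMuck}. Once this integrability is recorded, all three kernel identities follow as stated.
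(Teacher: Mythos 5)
Your proposal is correct and follows essentially the same route as the paper: reduce each kernel identity to the corresponding operator-level statement (Theorem \ref{thm:FundSol}(C), (D) and Proposition \ref{prop:consproperty}) and justify the Fubini interchanges by the uniform integrability of the Gaussian kernel in one variable, which is exactly the ``claim'' the paper proves via an annulus decomposition using Lemma \ref{lem:Cruz-Uribe et Rios} and the doubling property \eqref{DoublingMuck}. The only difference is that you state this integrability step as an obstacle to be checked rather than carrying out the dyadic-annulus computation, but the tools you name are the right ones.
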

\begin{proof}
Assume that $\mathcal{H}$ has Gaussian upper bounds. We first claim that
\begin{align*}
    &\supess_{x \in \mathbb{R}^n} \left( \int_{\mathbb{R}^n} \left| \Gamma(t,x;s,y) \right| \, \mathrm{d}\omega(y) \right) \leq C(K_0, k_0, D) < \infty,
    \\ &\supess_{x \in \mathbb{R}^n} \left( \int_{\mathbb{R}^n} \int_{\mathbb{R}^n} \left| \Gamma(t,x;r,z) \right| \ \left| \Gamma(r,z;s,y) \right| \, \mathrm{d}\omega(y) \mathrm{d}\omega(z) \right) \leq \Tilde{C}(K_0, k_0, D) < \infty.
\end{align*}
Using this claim, we apply Fubini's theorem to easily derive (1) and (2), respectively from the adjointness property and the Chapman-Kolmogorov identities in Theorem \ref{thm:FundSol} (points C) and D)), and (3) from the conservation property in Proposition \ref{prop:consproperty}. In particular, $\mathcal{H}^\star$ has Gaussian upper bounds by the adjointness property (1). To prove the claim, for almost all $x \in \mathbb{R}^n$, we have, by Lemma \ref{lem:Cruz-Uribe et Rios},
$$ \int_{\mathbb{R}^n} \left| \Gamma(t,x;s,y) \right| \, \mathrm{d}\omega(y) \leq \frac{K_0}{\omega_{t-s}(x)} \int_{\mathbb{R}^n} e^{-k_0 \frac{|x-y|^2}{t-s}} \, \mathrm{d}\omega(y). $$
Setting $C_k := B(x, 2^{k+1} \sqrt{t-s}) \setminus B(x, 2^k \sqrt{t-s})$ for all $k \in \mathbb{N}$, we have
\begin{align*}
    \int_{\mathbb{R}^n} e^{-k_0 \frac{|x-y|^2}{t-s}} \, \mathrm{d}\omega(y) 
    &= \int_{B(x, \sqrt{t-s})} e^{-k_0 \frac{|x-y|^2}{t-s}} \, \mathrm{d}\omega(y)  + \sum_{k=0}^{\infty} \int_{C_k} e^{-k_0 \frac{|x-y|^2}{t-s}} \, \mathrm{d}\omega(y) \\
    &\leq \omega_{t-s}(x) + \sum_{k=0}^{\infty} \omega(B(x, 2^{k+1} \sqrt{t-s})) e^{-k_0 4^k} \\
    &\leq \left( 1 + \sum_{k=0}^{\infty} D^{k+1} e^{-k_0 4^k} \right) \omega_{t-s}(x),
\end{align*}
where we have used the doubling property \eqref{DoublingMuck} in the last inequality. Thus, we have
$$ \int_{\mathbb{R}^n} \left| \Gamma(t,x;s,y) \right| \, \mathrm{d}\omega(y) \leq C(K_0, k_0, D) < \infty, $$
and the first bound of the claim is proved. The second bound of the claim follows easily from this first bound. Finally, if $\mathcal{H}^\star$ has Gaussian upper bounds, then $\mathcal{H}$ does by the adjointness property. 
\end{proof}

\subsection{Moser’s \texorpdfstring{$L^2$}{L2}-\texorpdfstring{$L^\infty$}{Linfini} estimates }

\begin{defn}\label{def:Moser}
    We say that $\mathcal{H}$, respectively $\mathcal{H}^\star$, satisfies Moser’s $L^2$-$L^\infty$ estimates if there exists a constant $B>0$ such that that for all $R>0$, $(t_0,x_0) \in \mathbb{R}^{1+n}$, and all local weak solutions of $\mathcal{H}u=0$ on a neighborhood of $Q_{2R}(t_0,x_0)$, respectively $\mathcal{H}^\star v=0$ and $Q_{2R}^\star(t_0,x_0)$, respectively, have local bounds of the form, respectively,
    \begin{equation}\label{Moser H}
        \supess_{Q_{R}(t_0,x_0)} \left | u \right | \leq B \left ( \frac{1}{\mu(Q_{2R}(t_0,x_0))} \int_{Q_{2R}(t_0,x_0)} \left | u \right |^2 \ \mathrm d\mu\right  )^{1/2},
    \end{equation}
    \begin{equation}\label{Moser Hstar}
        \supess_{Q_{R}^\star(t_0,x_0)} \left | v \right | \leq B \left ( \frac{1}{\mu(Q_{2R}^\star(t_0,x_0))} \int_{Q_{2R}^\star(t_0,x_0)} \left | v \right |^2 \ \mathrm d\mu\right  )^{1/2}.
    \end{equation}
\end{defn}
\begin{rem}
Another notion that can be defined is the local boundedness property: we say that $\mathcal{H}$, respectively $\mathcal{H}^\star$, satisfies the local boundedness property if the equation \eqref{Moser H}, respectively \eqref{Moser Hstar}, are modified respectively as follows:
    \begin{equation}\label{LBP H}
        \supess_{B(x_0, R)} |u(t_0, \cdot)| \leq B \left ( \frac{1}{\mu(Q_{2R}(t_0,x_0))} \int_{Q_{2R}(t_0,x_0)} \left | u \right |^2 \ \mathrm d\mu\right  )^{1/2},
    \end{equation}
    \begin{equation}\label{LBP Hstar}
        \supess_{B(x_0, R)} |v(t_0, \cdot)| \leq B \left ( \frac{1}{\mu(Q_{2R}^\star(t_0,x_0))} \int_{Q_{2R}^\star(t_0,x_0)} \left | v \right |^2 \ \mathrm d\mu\right  )^{1/2}.
    \end{equation}
Up to changing the constant $B$ and the scale, the two notions are equivalent. More precisely, if $\mathcal{H}$ satisfies Moser’s $L^2$-$L^\infty$ estimates then it satisfies the local boundedness property. Conversely, if $\mathcal{H}$ satisfies the local boundedness property then it satisfies Moser’s $L^2$-$L^\infty$ estimates with a scale of $3R$ instead of $2R$ in \eqref{Moser H}. The same statement applies to $\mathcal{H}^\star$. This follows easily from \eqref{sup pour sol faibles} and the doubling property \eqref{DoublingMuck}.
\end{rem}

\begin{rem}
    The conditions \eqref{Moser H} and \eqref{Moser Hstar} are usually presented by taking suprema on $Q_{R}(t_0,x_0)$ and $Q_{R}^\star(t_0,x_0)$ respectively, which means that one would need to know \textit{a priori} that local weak solutions have pointwise values. In contrast, Moser's $L^2$-$L^\infty$ estimates offer a weaker formulation, as they only require taking the essential supremum over $Q_{R}(t_0,x_0)$ and $Q_{R}^\star(t_0,x_0)$. In particular, it does not require knowing that solutions are continuous, or even defined, at every point. This more relaxed formulation, or more precisely the local boundedness property \eqref{LBP H} and \eqref{LBP Hstar}, which is an equivalent formulation, has been used in kinetic and parabolic contexts in \cite{auscher2024Kolmogorovfundamental} and \cite{auscher2023universal}, respectively.
\end{rem}

\subsection{Proof of (1) of Theorem \ref{théorème principal}}
The first implication of (1) of Theorem \ref{théorème principal} is covered by the following proposition.
\begin{prop}\label{ThmHkIM}
    If $\mathcal{H}$ and $\mathcal{H}^\star$ satisfy Moser's $L^2$-$L^\infty$ estimates, then $\mathcal{H}$ has Gaussian upper bounds. 
\end{prop}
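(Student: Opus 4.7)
The approach is to use Moser's $L^2$-$L^\infty$ estimates twice, once for $\mathcal{H}$ and once for $\mathcal{H}^\star$, in combination with the $L^2$ off-diagonal decay of Proposition \ref{prop:off-diagonal}. The first application produces an integral kernel for $\Gamma(t,s)$ with an $L^2$-in-$y$ bound of Gaussian type; the second upgrades this to a pointwise bound in $y$, via the fact that the kernel is itself a weak solution of the backward equation $\mathcal{H}^\star v = 0$ in the $(s', y)$ variables.

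Fix $t > s$ and set $r := \sqrt{t-s}$. For any $y_0 \in \mathbb{R}^n$ and any $\psi \in L^2_\omega(\mathbb{R}^n)$ supported in a ball $B(y_0, \rho_0)$ with $\rho_0 \sim r$ chosen sufficiently small, the function $u(\tau, x) := \Gamma(\tau, s)\psi(x)$ is a local weak solution of $\mathcal{H}u = 0$ on $(s, \infty) \times \mathbb{R}^n$. Applying the Moser bound to $u$ at $(t, x_0)$ on a scale of order $r$ and controlling the $L^2_\omega$-integral on the parabolic cylinder via Proposition \ref{prop:off-diagonal} --- with off-diagonal distance comparable to $|x_0 - y_0|$ when $|x_0 - y_0|$ is large enough compared to $r$ --- gives, after invoking doubling and a countable covering argument in $x_0$,
\[
|\Gamma(t,s)\psi(x)| \le \frac{C\, e^{-c|x-y_0|^2/(t-s)}}{\sqrt{\omega_{t-s}(x)}}\,\|\psi\|_{L^2_\omega}, \quad \text{for a.e.\ }x \in \mathbb{R}^n.
\]
By the Riesz representation theorem applied to the bounded linear functional $\psi \mapsto \Gamma(t,s)\psi(x)$ on $L^2_\omega(B(y_0, \rho_0))$ for a.e.\ $x$, there exists a kernel $k(t, x; s, y)$ (defined globally in $y$ after varying $y_0$, with consistency guaranteed by Riesz uniqueness) such that $\Gamma(t,s)\psi(x) = \int k(t,x;s,y)\, \psi(y)\, d\omega(y)$ and
\[
\|k(t, x; s, \cdot)\|_{L^2_\omega(B(y_0, \rho_0))} \le \frac{C\, e^{-c|x-y_0|^2/(t-s)}}{\sqrt{\omega_{t-s}(x)}}.
\]

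For the second application of Moser, the key claim is that for a.e.\ $x$, the function $(s', y) \mapsto \overline{k(t, x; s', y)}$ is a local weak solution of $\mathcal{H}^\star v = 0$ on $(-\infty, t) \times \mathbb{R}^n$. This follows from the adjoint property of Theorem \ref{thm:FundSol}(C): for every $\phi \in L^2_\omega$, $\Tilde{\Gamma}(s', t)\phi(y) = \int \phi(x) \overline{k(t, x; s', y)}\, d\omega(x)$ is a local weak solution of $\mathcal{H}^\star v = 0$, and substituting this identity into the weak formulation and swapping integrations via Fubini (justified by the $L^2$-in-$y$ bound of the first step) transfers the weak equation to $\overline{k(t, x; \cdot, \cdot)}$ for a.e.\ $x$. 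Applying next the assumed Moser $L^2$-$L^\infty$ bound for $\mathcal{H}^\star$ at $(s, y_0)$ on a scale $R_0 \sim r$ compatible with the first step, and using the $L^2$-bound above uniformly on the short time interval $[s, s + 4R_0^2)$ where $t - s' \sim r^2$, gives
\[
\supess_{Q^\star_{R_0}(s, y_0)} |k(t, x; \cdot, \cdot)| \le \frac{C'\, e^{-c|x-y_0|^2/(t-s)}}{\sqrt{\omega_{t-s}(x)\,\omega_{t-s}(y_0)}}, \quad \text{for a.e.\ }x.
\]
Passing to the time-slice at $\{s\} \times B(y_0, R_0)$ via the equivalent local-boundedness formulation of Moser (see the remark after Definition \ref{def:Moser}), and then invoking a countable covering in $y_0$ together with the doubling property to replace $y_0$ by $y$ in the bound, yields the desired pointwise Gaussian upper bound for a.e.\ $(x, y) \in \mathbb{R}^{2n}$.

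The main obstacle is the justification in the second step that $(s', y) \mapsto \overline{k(t, x; s', y)}$ is a local weak solution of $\mathcal{H}^\star v = 0$ for a.e.\ $x$: this is a Fubini-type transfer of the weak formulation from the fundamental solution $\Tilde{\Gamma}(\cdot, t)\phi$ to its integral kernel, for which the $L^2$-in-$y$ bound of the first step is what provides the required integrability. The remaining technicalities --- matching scales across the two Moser applications and the off-diagonal estimate, passing to the initial time slice via local boundedness, and consistently managing the nested ``a.e.''\ qualifiers through countable coverings and doubling --- are routine once the framework is in place.
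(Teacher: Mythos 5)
Your overall architecture differs from the paper's: the paper runs Davies' exponential perturbation method on $e^{\psi}\Gamma(t,s)e^{-\psi}$, combines it with Moser and duality to get twisted $L^1_\omega\to L^\infty$ bounds, extracts the kernel by Dunford--Pettis, and optimizes over Lipschitz $\psi$; you instead follow the Aronson-style route of applying Moser twice to the kernel itself, with Proposition \ref{prop:off-diagonal} supplying the Gaussian factor. Your first step is sound (modulo the standard care needed to define $\psi\mapsto(\Gamma(t,s)\psi)(x)$ for a.e.\ $x$ uniformly over $\psi$ --- a countable dense family or Dunford--Pettis, not bare Riesz representation, since $\Gamma(t,s)\psi$ is only an $L^2_\omega$-class). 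The genuine gap is in the second step, precisely at the point you flag as the ``main obstacle'': the Fubini transfer of the weak formulation from $\Tilde{\Gamma}(\cdot,t)\phi$ to $\overline{k(t,x;\cdot,\cdot)}$ does not go through as described. The weak formulation of $\mathcal{H}^\star v=0$ contains the term $\int\!\!\int \omega^{-1}A^\star\nabla_y v\cdot\nabla_y\varphi\,\mathrm{d}\mu$, and to move the $x$-integral outside you would first need to know that $y\mapsto k(t,x;s',y)$ lies in $H^1_{\omega,\mathrm{loc}}$ for a.e.\ fixed $x$ and that $\nabla_y\Tilde{\Gamma}(s',t)\phi=\int\phi(x)\,\nabla_y\overline{k(t,x;s',\cdot)}\,\mathrm{d}\omega(x)$. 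Neither is available from the $L^2$-in-$y$ bound of step one: that bound gives integrability of the kernel, not weak differentiability in $y$, so the weak equation for the kernel cannot even be written down before it is verified.

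The fix stays within your framework but uses the Chapman--Kolmogorov identity rather than Fubini. Set $m:=\tfrac{t+s}{2}$. Step one (summed over a countable cover in $y_0$) gives $k(t,x;m,\cdot)\in L^2_\omega(\mathbb{R}^n)$ for a.e.\ $x$, and Chapman--Kolmogorov for the operators, read through the kernels, yields
\begin{equation*}
\overline{k(t,x;s',y)}=\bigl(\Tilde{\Gamma}(s',m)\,\overline{k(t,x;m,\cdot)}\bigr)(y),\qquad s'<m,
\end{equation*}
for a.e.\ $x$. The right-hand side is $\Tilde{\Gamma}(\cdot,m)$ applied to a fixed $L^2_\omega$ datum, hence a genuine local weak solution of $\mathcal{H}^\star v=0$ on $(-\infty,m)\times\mathbb{R}^n$ by Theorem \ref{thm:FundSol} B). You may then apply Moser for $\mathcal{H}^\star$ on $Q^\star_{2R_0}(s,y_0)$ provided $4R_0^2<\tfrac{t-s}{2}$ (note also that in step one the Moser scale must satisfy $4R^2<t-s$ so that the cylinder stays in $(s,t]$ where $\Gamma(\cdot,s)\psi$ solves the equation), and the rest of your argument --- uniform $L^2$ control on the short time interval, passage to the time slice via \eqref{sup pour sol faibles}, covering and doubling --- goes through. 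With this repair the proof is correct; it is worth noting that the paper's Davies-twist argument buys explicit structural constants and avoids any discussion of the kernel as a solution, while your route, once repaired, is closer to the classical Aronson scheme and reuses the off-diagonal estimates already proved in Proposition \ref{prop:off-diagonal}.
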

\begin{proof}
We adapt the argument in \cite{hofmann2004gaussian} to include the weight $\omega$. Let $\gamma \geq0$ and $\psi \in \mathrm{Lip}(\mathbb{R}^n)$ a bounded Lipschitz function such that $\left \| \nabla_x \psi \right \|_{L^\infty}(\mathbb{R}^n) \leq \gamma.$ We fix $f \in \mathcal{D}(\mathbb{R}^n)$ and also fix $s \in \mathbb{R}$. We set 
\begin{equation*}
    U(t):=\Gamma(t,s)e^{-\psi}f, \ \ \text{for all} \ t>s.
\end{equation*}
For all $t>s$, we have $e^{\psi}U \in L^2((s,t);H^1_\omega (\mathbb{R}^n))$ and its distributional time derivative verifies
\begin{align*}
    \partial_t(e^{\psi}U) = -\omega^{-1}\left ( A \nabla_x U \cdot \nabla_x \psi \right ) e^{\psi}+\omega^{-1} \mathrm{div}_x(A \nabla_x U e^{\psi}) \ \ \mathrm{in} \ \mathcal{D'}((s,t)\times \mathbb{R}^n).
\end{align*}
Therefore, by Proposition \ref{prop:energyboundedinterval}, we have $ \tau \mapsto \left \| e^{\psi} U(\tau) \right \|^2_{2,\omega}$ is absolutely continuous and we can write the following energy equality:
\begin{align*}
     \|e^{\psi}U(t)   \|^2_{2,\omega}-\left \|f  \right \|^2_{2,\omega}&= -2\mathrm{Re}\int_{s}^{t} \int_{\mathbb{R}^n} \omega^{-1}\left (  A(\tau,\cdot)\nabla_x U(\tau)\cdot \nabla_x \psi \right )  \overline{U(\tau)}   e^{2\psi} \ \mathrm d \omega \mathrm d \tau \\&
     \hspace{1.5cm} -2\mathrm{Re}\int_{s}^{t} \int_{\mathbb{R}^n} \omega^{-1}A(\tau,\cdot)\nabla_x U(\tau)\cdot \overline{\nabla_x (e^\psi U(\tau))}  e^{\psi} \ \mathrm d \omega \mathrm d \tau
    \\&= -2\mathrm{Re}\int_{s}^{t} \int_{\mathbb{R}^n} \omega^{-1}A(\tau,\cdot)\nabla_x U(\tau)\cdot \overline{\nabla_x U(\tau)}e^{2\psi} \ \mathrm d \omega \mathrm d \tau 
    \\ & \hspace{1.5cm}- 4\mathrm{Re} \int_{s}^{t} \int_{\mathbb{R}^n} \omega^{-1}\left ( A(\tau,\cdot)\nabla_x U(\tau)\cdot \nabla_x \psi \right )\overline{U}(\tau) e^{2\psi} \ \mathrm d \omega \mathrm d \tau.
\end{align*}
From \eqref{ellipticité A}, it follows that
\begin{align*} 
       \|e^{\psi}U(t)   \|^2_{2,\omega}-\left \|f  \right \|^2_{2,\omega} &\leq -2\nu \int_{s}^{t} \int_{\mathbb{R}^n} 
       | \nabla_x U  |^2 e^{2\psi} \ \mathrm d \omega \mathrm d \tau + 4M\gamma \int_{s}^{t} \int_{\mathbb{R}^n}  | \nabla_x U  | e^\psi \left | U \right |e^{\psi} \ \mathrm d \omega \mathrm d \tau
      \\& \leq \frac{2M^2 \gamma^2}{\nu} \int_{s}^{t} \int_{\mathbb{R}^n} |e^{\psi} U  |^2  d \omega \mathrm d \tau.
\end{align*}
Therefore, 
\begin{align*}
  \|e^{\psi}U(t)   \|^2_{2,\omega} \leq \left \|f  \right \|^2_{2,\omega} + \frac{2M^2 \gamma^2}{\nu} \int_{s}^{t}   \|e^{\psi}U(\tau)  \|^2_{2,\omega} \ \mathrm{d}\tau.
\end{align*}
As this is true for all $t>s$, by Grönwall's lemma, we have
\begin{align}\label{UUUU}
       \|e^{\psi}U(t)  \|^2_{2,\omega} \leq e^{\frac{2M^2\gamma^2}{\nu}(t-s)}\left \|f  \right \|^2_{2,\omega},
\end{align}
that is to say
\begin{align*}
      \|e^{\psi}\Gamma(t,s)e^{-\psi}f  \|^2_{2,\omega} \leq e^{\frac{2M^2\gamma^2}{\nu}(t-s)}\left \|f  \right \|^2_{2,\omega}.
\end{align*}
This also applies to the adjoint $\Tilde{\Gamma}(s,t)$. In fact, a similar computation (or duality) shows  
\begin{equation*}
    \|e^{-\psi}\Tilde{\Gamma}(s,t)e^{\psi}f  \|^2_{2,\omega} \leq e^{\frac{2M^2\gamma^2}{\nu}(t-s)}\left \|f  \right \|^2_{2,\omega}.
\end{equation*}
Moser’s $L^2$–$L^\infty$ estimate \eqref{Moser H} with $R = \frac{\sqrt{t-s}}{2}$, together with \eqref{sup pour sol faibles}, implies that for all $x \in \mathbb{R}^n$ and for almost every $z \in B(x,R)$,
\begin{align*}
    \left | U(t,z) \right |^2\leq \frac{B^2}{\mu(Q_{\sqrt{t-s}}(t,x))}\int_{s}^{t}
\int_{B(x,\sqrt{t-s})} \left | U(\tau,y) \right |^2\ \mathrm d\mu.
\end{align*}
Hence, for almost every $z \in B(x,R)$,
\begin{align*}
     |e^{\psi(z)} U(t,z) |^2 &\leq \frac{B^2}{(t-s)\omega_{t-s}(x)}\int_{s}^{t}
\int_{B(x,\sqrt{t-s})}e^{2(\psi(z)-\psi(y)}  | e^{\psi(y)}U(\tau,y)  |^2\ \mathrm d\omega(y)  \mathrm d\tau
\\ & \leq \frac{B^2 e^{4\gamma \sqrt{t-s}}}{(t-s)\omega_{t-s}(x)} \int_{s}^{t}
\int_{B(x,\sqrt{t-s})} | e^{\psi(y)}U(\tau,y) |^2\ \mathrm d\omega(y) \mathrm d \tau
\\ & \leq \frac{B^2 e^{4\gamma \sqrt{t-s}}}{(t-s)\omega_{t-s}(x)} \int_{s}^{t} \|e^{\psi}U(\tau)   \|^2_{2,\omega} \ \mathrm d\tau.
\end{align*}
Remark that for all $z \in B(x,R)$, we have $B(z,\frac{\sqrt{t-s}}{2}) \subset B(x,\sqrt{t-s})$, and by using the doubling property \eqref{DoublingMuck}, we obtain
\begin{equation*}
\omega_{t-s}(z)=\omega( B(z,\sqrt{t-s})) \le D \, \omega(B(z,\frac{\sqrt{t-s}}{2})) \le D \omega_{t-s}(x).
\end{equation*}
Thus, for almost every $z \in B(x,R)$,
\begin{align*}
     |e^{\psi(z)} U(t,z) |^2  \leq \frac{DB^2 e^{4\gamma \sqrt{t-s}}}{(t-s)\omega_{t-s}(z)} \int_{s}^{t} \|e^{\psi}U(\tau)   \|^2_{2,\omega} \ \mathrm d\tau.
\end{align*}
Using this together with \eqref{UUUU}, we have for almost every $x \in \mathbb{R}^n$,
\begin{equation}\label{HKim}
     |e^{\psi(x)} U(t,x)  |^2  \leq \frac{DB^2 e^{4\gamma \sqrt{t-s}}}{(t-s)\omega_{t-s}(x)}  \left ( \int_{s}^{t} e^{\frac{2M^2\gamma^2}{\nu}(\tau-s)}\ \mathrm d\tau \right ) \left \|f  \right \|^2_{2,\omega}.
\end{equation}
If $\gamma=0$, we have 
\begin{equation*}
    \left | U(t,x) \right |^2  \leq \frac{DB^2}{\omega_{t-s}(x)} \left \|f  \right \|^2_{2,\omega}.
\end{equation*}
Hence, 
\begin{equation}\label{0Gamma}
   \left \| \sqrt{\omega_{t-s}}\ \Gamma (t,s)f \right \|_{L^\infty(\mathbb{R}^n)}\leq D^{1/2}B  \left \|f  \right \|_{2,\omega}.
\end{equation}
Likewise, we have
\begin{equation}\label{00Gamma}
    \| \sqrt{\omega_{t-s}} \ \Tilde{\Gamma}(s,t)f  \|_{L^\infty(\mathbb{R}^n)}\leq D^{1/2} B  \left \|f  \right \|_{2,\omega}.
\end{equation}
Whereas if $\gamma>0$, it follows from \eqref{HKim} that for almost every $x \in \mathbb{R}^n$,
\begin{equation*}
   |e^{\psi(x)} U(t,x)  |^2  \leq \frac{DB^2 e^{4\gamma \sqrt{t-s}}}{(t-s)\omega_{t-s}(x)}\frac{e^{2M^2\gamma^2(t-s)/\nu}}{2M^2\gamma^2/\nu} \left \|f  \right \|^2_{2,\omega}.
\end{equation*}
Therefore,
\begin{equation*}
     \| \sqrt{\omega_{t-s}}\ e^{\psi} \Gamma(t,s)e^{-\psi}f  \|_{L^\infty(\mathbb{R}^n)}\leq \frac{D^{1/2}B}{M\sqrt{2/\nu}}\frac{e^{2\gamma \sqrt{t-s}+\frac{M^2\gamma^2}{\nu}(t-s)}}{\gamma\sqrt{t-s}}  \left \|f  \right \|_{2,\omega}.
\end{equation*}
Likewise, we have
\begin{equation}\label{AAAAAA}
    \| \sqrt{\omega_{t-s}}\ e^{-\psi} \Tilde{\Gamma}(s,t)e^{\psi}f \|_{L^\infty(\mathbb{R}^n)}\leq \frac{D^{1/2}B}{M\sqrt{2/\nu}}\frac{e^{2\gamma \sqrt{t-s}+\frac{M^2\gamma^2}{\nu}(t-s)}}{\gamma\sqrt{t-s}}  \left \|f  \right \|_{2,\omega}.
\end{equation}
When $\gamma=0$, using \eqref{00Gamma} and a duality argument, we have
\begin{equation}\label{CCCC}
      \|\Gamma(t,s)(\sqrt{\omega_{t-s}}f )  \|_{2,\omega} \leq D^{1/2} B \left \|f  \right \|_{1,\omega},
\end{equation}
and the same holds for $\Tilde{\Gamma}(s,t)$.\\
Now, if $\gamma>0$, by \eqref{AAAAAA} and a duality argument, we have
\begin{equation}\label{BBBBB}
      \|e^{\psi}\Gamma(t,s)(\sqrt{\omega_{t-s}}\ e^{-\psi}f )  \|_{2,\omega} \leq \frac{D^{1/2}B}{M\sqrt{2/\nu}}\frac{e^{2\gamma \sqrt{t-s}+\frac{M^2\gamma^2}{\nu}(t-s)}}{\gamma\sqrt{t-s}}  \left \|f  \right \|_{1,\omega},
\end{equation}
and the same holds for $\Tilde{\Gamma}(s,t)$. Using the Chapman-Kolmogorov identities (point D) in Theorem \ref{thm:FundSol}), we write $\Gamma(t,s)=\Gamma(t,\frac{t+s}{2})\Gamma(\frac{t+s}{2},s).$ Hence, 
\begin{align*}
    \sqrt{\omega_{t-s}}\ &e^\psi \Gamma(t,s)(\sqrt{\omega_{t-s}}\ e^{-\psi}f)=\sqrt{\omega_{t-s} }\ e^{\psi}\Gamma(t,\frac{t+s}{2})e^{-\psi} ( e^{\psi}\Gamma(\frac{t+s}{2},t) \sqrt{\omega_{t-s}}\ e^{-\psi}f )
    \\&=\sqrt{\frac{\omega_{(t-s)}}{\omega_{\frac{t-s}{2}}}}\sqrt{\omega_{(t-s)/2}}\ e^{\psi}\Gamma(t,\frac{t+s}{2})e^{-\psi} \left ( e^{\psi}\Gamma(\frac{t+s}{2},t) \sqrt{\omega_{(t-s)/2}}\ e^{-\psi}\sqrt{\frac{\omega_{(t-s)}}{\omega_{\frac{t-s}{2}}}}f \right ).
\end{align*}
Notice that by the doubling property \eqref{DoublingMuck}, we have 
\begin{equation}\label{Doubling}
    \left \|  \sqrt{\frac{\omega_{(t-s)}}{\omega_{\frac{(t-s)}{2}}}} \right \|_{L^\infty(\mathbb{R}^n)} \leq D^{1/2}.
\end{equation}
Again, when $\gamma=0$, combining \eqref{0Gamma}, \eqref{CCCC} and \eqref{Doubling} gives
\begin{equation}\label{gamma=0}
    \left \| \sqrt{\omega_{t-s}} \ \Gamma(t,s)(\sqrt{\omega_{t-s}} f) \right \|_{L^\infty(\mathbb{R}^n)}\leq B^2 D^2 \left \|f  \right \|_{1,\omega}.
\end{equation}
The same yields for $\Tilde{\Gamma}(s,t)$. Otherwise, if $\gamma>0$, by combining \eqref{AAAAAA}, \eqref{BBBBB} and \eqref{Doubling}, we have
\begin{equation}\label{gammaNon0}
      \| \sqrt{\omega_{t-s}}\ e^\psi \Gamma(t,s)(\sqrt{\omega_{t-s}}\ e^{-\psi}f)  \|_{L^\infty(\mathbb{R}^n)}\leq \frac{B^2D^2\nu}{M^2}\frac{e^{2\gamma \sqrt{2(t-s)}+\frac{M^2\gamma^2}{\nu}(t-s)}}{\gamma^2(t-s)}  \left \|f  \right \|_{1,\omega},
\end{equation}
and the same yields for $\Tilde{\Gamma}(s,t)$.\\
The estimate \eqref{gamma=0} and the Dunford-Pettis theorem \cite{dunford1940linear} ensures that, for all $t>s$, $\Gamma(t,s)$ is an integral operator with a unique kernel $\Gamma(t,x;s,y)$ satisfying, for almost all $x,y \in \mathbb{R}^n$, the estimate
\begin{equation}\label{DDD}
    \left | \Gamma(t,x;s,y) \right |\leq \frac{1}{\sqrt{\omega_{t-s}(x)}\sqrt{\omega_{t-s}(y)}}  B^2 D^2.
\end{equation}
Moreover, we deduce from the estimate \eqref{gammaNon0} that for all $\gamma>0$ and $\psi \in \mathrm{Lip}(\mathbb{R}^n)$ bounded with $\left \| \nabla_x  \psi \right \|_{L^\infty(\mathbb{R}^n)}\leq \gamma$, we have the estimate for almost all $x,y \in \mathbb{R}^n$, 
\begin{align}\label{CCCCC}
    \left | \Gamma(t,x;s,y) \right | \leq \frac{B^2D^2\nu/M^2}{\sqrt{\omega_{t-s}(x)}\sqrt{\omega_{t-s}(y)}} \times \frac{\exp(2\gamma \sqrt{2(t-s)}+\frac{M^2\gamma^2}{\nu}(t-s))}{\gamma^2(t-s)}\times \exp(\psi(y)-\psi(x)).
\end{align}
We fix $t>s$ and $x\neq y\in \mathbb{R}^n$ for which this is valid. We set $\psi(z):=\inf(\gamma\left |z-y  \right |, \gamma \left |x-y  \right |)$ with $\gamma:=\frac{\left |x-y  \right |}{2 \kappa(t-s)}$ where $\kappa:=\frac{M^2}{\nu}$. The function $\psi$ is bounded and $\left \| \nabla_x  \psi \right \|_{L^\infty(\mathbb{R}^n)}\leq \gamma$. The inequality \eqref{CCCCC} becomes 
\begin{equation}\label{DDDD}
     \left | \Gamma(t,x;s,y) \right | \leq \frac{B^2D^2\nu/M^2}{\sqrt{\omega_{t-s}(x)}\sqrt{\omega_{t-s}(y)}} \times \frac{\exp(2\gamma \sqrt{2(t-s)}+\kappa \gamma^2(t-s)-\gamma\left |x-y  \right | )}{\gamma^2(t-s)}.
\end{equation}
To simplify the notation, we set $\xi :=\frac{\left | x-y \right |}{\sqrt{t-s}}$. We rewrite the right-hand side term above as follows 
\begin{align*}
    \frac{\exp(2\gamma \sqrt{2(t-s)}+\kappa \gamma^2(t-s)-\gamma\left |x-y  \right | )}{\gamma^2(t-s)}= 4 \kappa^2 \frac{\exp(\frac{\sqrt{2}}{\kappa}\xi-\frac{1}{4\kappa}\xi^2)}{\xi^2} .
\end{align*}
Combining \eqref{DDDD} and \eqref{DDD} gives
\begin{equation}\label{PGB}
    \left | \Gamma(t,x;s,y) \right |  \leq \frac{B^2D^2}{\sqrt{\omega_{t-s}(x)}\sqrt{\omega_{t-s}(y)}} \min\left ( 1, \frac{4M^2}{\nu} \frac{\exp(\frac{\sqrt{2}}{\kappa}\xi-\frac{1}{4\kappa}\xi^2)}{\xi^2} \right ).
\end{equation}
Finally, choose $R>0$ such for all $\xi'>R$, 
\begin{equation*}
   \frac{4M^2}{\nu} \cdot \frac{\exp(\frac{\sqrt{2}}{\kappa}\xi-\frac{1}{4\kappa}\xi^2)}{\xi^2} \leq \exp(-\frac{{\xi'}^2}{8\kappa})\cdot
\end{equation*}
Hence, if $\xi >R$, \textit{i.e.} $R\sqrt{t-s}<\left | x-y \right |$, it follows from \eqref{PGB} that
\begin{equation*}
     \left | \Gamma(t,x;s,y) \right |  \leq \frac{B^2D^2}{\sqrt{\omega_{t-s}(x)}\sqrt{\omega_{t-s}(y)}} \exp(-\frac{\nu}{8M^2}\frac{{\left | x-y \right |}^2}{(t-s)}).
\end{equation*}
The other case is easy to treat. In fact, if $\frac{\left | x-y \right |}{\sqrt{t-s}}\leq R$ then $\exp(-R^2)\leq \exp\left ( \frac{-\left | x-y \right |^2}{t-s} \right )$ and we use simply \eqref{PGB} to write 
$$ \left | \Gamma(t,x;s,y) \right |  \leq \frac{B^2D^2}{\sqrt{\omega_{t-s}(x)}\sqrt{\omega_{t-s}(y)}} e^{R^2} \exp\left (- \frac{\left | x-y \right |^2}{t-s} \right ).$$
This concludes the proof of Proposition \ref{ThmHkIM}.
\end{proof}

The reverse implication of (1) of Theorem \ref{théorème principal} is addressed by the following proposition.

\begin{prop}\label{ThmConverseHK}
    If $\mathcal{H}$ has Gaussian upper bounds, then $\mathcal{H}$ and $\mathcal{H}^\star$ satisfy Moser’s $L^2$-$L^\infty$ estimates.
\end{prop}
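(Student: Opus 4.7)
The plan is to prove the equivalent local boundedness property \eqref{LBP H} for $\mathcal{H}$ (and \eqref{LBP Hstar} for $\mathcal{H}^\star$), which by the preceding remark yields Moser's $L^2$-$L^\infty$ estimates up to a rescaling of the cylinder. By Proposition \ref{prop:solfundgeneralisée}, the Gaussian upper bound \eqref{pgb} on $\Gamma$ transfers automatically to $\Tilde{\Gamma}$, so we may use both freely.

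First I would extract from \eqref{pgb} a pointwise $L^2_\omega \to L^\infty$ estimate of the form $\sqrt{\omega_{t-s}(x)}\,|(\Gamma(t,s)f)(x)| \leq C\|f\|_{2,\omega}$ (and similarly for $\Tilde{\Gamma}$). This follows by Cauchy--Schwarz on the kernel representation, replacing $[\omega_{t-s}(x)\omega_{t-s}(y)]^{-1/2}$ by $\omega_{t-s}(x)^{-1}$ thanks to Lemma \ref{lem:Cruz-Uribe et Rios}, and controlling $\int_{\mathbb{R}^n} e^{-k_0|x-y|^2/(t-s)}\,\mathrm d\omega(y)$ by a dyadic annular decomposition combined with doubling \eqref{DoublingMuck}.

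Given a local weak solution $u$ of $\mathcal{H}u=0$ on a neighborhood of $Q_{2R}(t_0,x_0)$, I would fix $x \in B(x_0,R)$ and localize via a spatial cut-off $\chi$ with $\chi = 1$ on $B(x_0,3R/2)$ and $\mathrm{supp}(\chi) \subset B(x_0,2R)$. For each $s \in J := (t_0 - 2R^2, t_0 - R^2)$, the function $w := \chi u$ on $(s,t_0) \times \mathbb{R}^n$ solves the Cauchy problem of Proposition \ref{prop: Pb de Cauchy borné} with initial datum $\chi u(s) \in L^2_\omega(\mathbb{R}^n)$, zero-order source $g := -\omega^{-1} A \nabla_x u \cdot \nabla_x \chi$, and divergence-form source $-\omega^{-1}\mathrm{div}_x(\omega F)$ with $\omega F := u\, A \nabla_x \chi$; both are supported in the annulus $B(x_0,2R) \setminus B(x_0,3R/2)$. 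Testing the weak representation of Proposition \ref{prop: Pb de Cauchy borné} against the approximate Dirac $d^\epsilon_x := \omega(B(x,\epsilon))^{-1}\mathbf{1}_{B(x,\epsilon)}$ and passing to the limit $\epsilon \to 0$ (justified by the pointwise $L^2 \to L^\infty$ bound on $\Tilde{\Gamma}$) yields, for a.e. $x \in B(x_0,R)$,
\begin{align*}
u(t_0,x) = [\Gamma(t_0,s)(\chi u(s))](x) &+ \int_s^{t_0} \int_{\mathbb{R}^n} \Gamma(t_0,x;\tau,y)\, g(\tau,y)\, \mathrm d\omega(y)\, \mathrm d\tau \\
&+ \int_s^{t_0} \int_{\mathbb{R}^n} F(\tau,y) \cdot \overline{\nabla_y \Tilde{\Gamma}(\tau,y;t_0,x)}\, \mathrm d\omega(y)\, \mathrm d\tau.
\end{align*}

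Finally I would average this identity over $s \in J$ and bound the three contributions. For the leading term, the $L^2 \to L^\infty$ estimate combined with $\omega_{t_0-s}(x) \sim \omega(B(x_0,R))$ by doubling, Cauchy--Schwarz in $s$, and $|J| \sim R^2$, gives the desired $(\mu(Q_{2R})^{-1}\int_{Q_{2R}}|u|^2\,\mathrm d\mu)^{1/2}$ form. For the source terms, the annular support forces $|x-y| \geq R/2$, so the Gaussian decay $e^{-k_0|x-y|^2/(t_0-\tau)}$ provides extra smallness; combining with the Caccioppoli inequality (Lemma \ref{lem:Caccioppoli}) for $\nabla_x u$ in the zero-order source, and with its variant (Remark \ref{rem:varCaccioppoli}) applied to $\Tilde{\Gamma}$ for $\nabla_y \Tilde{\Gamma}$ in the divergence-form source (the pointwise bounds on $\Tilde{\Gamma}$ itself coming from its Gaussian upper bound), reduces everything to a multiple of the desired $L^2$ average of $|u|$. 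The main obstacle is justifying the representation rigorously --- in particular, the $\epsilon \to 0$ limit for the divergence-form term requires the variant Caccioppoli inequality in conjunction with weak $L^2$-convergence of $\nabla_y \Tilde{\Gamma}(\tau,\cdot;t_0,\cdot)d^\epsilon_x$ on annular regions where $\nabla \chi \neq 0$. The same argument, applied to $\mathcal{H}^\star$ with the roles of $\Gamma$ and $\Tilde{\Gamma}$ reversed, yields the Moser estimate \eqref{Moser Hstar}.
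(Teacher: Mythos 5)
Your argument is correct and is, at its core, the same strategy as the paper's: localize by a cutoff, represent the localized function via the Duhamel formula of Proposition \ref{prop: Pb de Cauchy borné}, and estimate each term using the kernel bound on annular regions, the Caccioppoli inequality (Lemma \ref{lem:Caccioppoli}) for the zero-order source, and the variant \eqref{Variante Caccioppoli} plus a duality/differentiation argument for the divergence-form source. The one structural difference is your purely spatial cutoff: it produces a nonzero initial datum $\chi u(s)$, which you absorb via the $L^2_\omega\to L^\infty$ smoothing bound and an average over $s\in(t_0-2R^2,t_0-R^2)$, whereas the paper uses a space-time cutoff vanishing at $t_0-4R^2$, trading the initial-datum term for a third source $(\partial_t\zeta)u$ that is estimated by the same annular kernel integral as the other two; the two devices are interchangeable (yours is the classical Aronson-style variant), and the paper's has the small advantage of giving the bound for all $t\in(t_0-R^2,t_0]$ at once, which combined with \eqref{sup pour sol faibles} yields \eqref{Moser H} directly at scale $2R$ rather than passing through the local boundedness property and a rescaling. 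Two steps in your sketch deserve to be spelled out, though neither is a gap. First, the ``extra smallness from the Gaussian decay'' is not a one-line matter: after Cauchy--Schwarz one must bound $\int_s^{t_0}\int \omega_{t_0-\tau}(x)^{-2}\,e^{-k_0|x-y|^2/(t_0-\tau)}\,\mathrm d\mu(\tau,y)$ over the annular region, and the factor $\omega_{t_0-\tau}(x)^{-2}$ blows up as $\tau\to t_0^-$; a dyadic decomposition in time combined with the doubling property \eqref{DoublingMuck} is needed to show this integral is $\lesssim R^2/\omega(B(x_0,2R))$ (this is exactly the paper's claim \eqref{Poids}). Second, for the divergence-form term it is cleaner to keep a general $\phi\in\mathcal{D}(B(x_0,R))$, bound $|\langle v_3(t_0),\phi\rangle|$ by a constant times $\|\phi\|_{L^1_\omega(B(x_0,R))}$ using \eqref{Variante Caccioppoli} and the kernel representation of $\Tilde{\Gamma}(\tau,t_0)$, and then conclude by the Lebesgue differentiation theorem; this is the dual formulation of your approximate-Dirac limit and sidesteps the need to control $\nabla_y\Tilde{\Gamma}(\tau,\cdot\,;t_0,\cdot)d^\epsilon_x$ directly.
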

\begin{proof}
The proof is the same for both $\mathcal{H}$ and $\mathcal{H}^\star$ and we only prove the result for $\mathcal{H}$. We fix $R>0$, $(t_0,x_0) \in \mathbb{R}^{1+n}$, and let $u$ be a local weak solutions of $\mathcal{H}u=0$ on a neighborhood of $Q_{2R}(t_0,x_0)$.
Using \eqref{sup pour sol faibles}, we need to prove that 
\begin{equation}\label{JJ}
      \sup_{t \in (t_0-R^2, t_0]} \left ( \supess_{B(x_0,R)} \left | u(t,\cdot) \right | \right )   \leq B \left ( \frac{1}{\mu(Q_{2R}(t_0,x_0))} \int_{Q_{2R}(t_0,x_0)} \left | u \right |^2 \ \mathrm d\mu\right  )^{1/2},
\end{equation}
where $B$ is a constant depending only on $K_0,k_0,\nu,M$, $D$ and $n$. To do so, let $\zeta$ be a non negative smooth function such that $\zeta = 1$ on $Q_{\frac{3}{2}R}(t_0,x_0)$, $\zeta = 0$ outside $Q_{\frac{7}{4}R}(t_0,x_0)$ and verifying $$\left \| \partial_t \zeta  \right \|_{L^\infty(\mathbb{R}^{1+n})}+\left \| \nabla_x \zeta  \right \|^2_{L^\infty(\mathbb{R}^{1+n})} \leq \frac{c(n)}{R^2}.$$
Then $v := \zeta u$ is the unique solution $v \in L^2((s,t);H^1_\omega(\mathbb{R}^n))$ to the  following Cauchy problem
\begin{align*}
\left\{
    \begin{array}{ll}
        \mathcal{H} v = (\partial_t \zeta) u - \omega^{-1}A(t,\cdot)\nabla_x u \cdot \nabla_x \zeta - \omega^{-1}\mathrm{div}_x(A(t,\cdot) u \nabla_x \zeta) \ \mathrm{in} \  \mathcal{D}'((t_0-4R^2,t_0) \times \mathbb{R}^n), \\
        v(\tau) \rightarrow 0 \  \mathrm{ in } \ \mathcal{D'}(\mathbb{R}^n) \ \mathrm{as} \ \tau \rightarrow (t_0-4R^2)^+
    \end{array}
\right.
\end{align*} 
By uniqueness and linearity, we write $v=v_1+v_2+v_3$ where $v_k$ is the solution to the above Cauchy problem considering only the $k^{th}$ term in the right-hand side of the first equation. We fix $ t\in (t_0-R^2,t_0]$. We have $v(t)=v_1(t)+v_2(t)+v_3(t)$ in $L^2_\omega(\mathbb{R}^n)$. Hence, for almost every $x \in B(x_0,R)$,
$$u(t,x)=v(t,x)=v_1(t,x)+v_2(t,x)+v_3(t,x).$$
\underline{\textit{Estimate of $\left\| v_1(t,\cdot) \right\|_{L^\infty(B(x_0,R))}$}:} since $(\partial_t \zeta) u$ is compactly supported in time and space, the representation in Proposition \ref{prop: Pb de Cauchy borné} can be written pointwisely as follows : for almost every $x \in B(x_0,R)$, we have
\begin{equation*}
    v_1(t,x)= \int_{t_0-4R^2}^{t}\left ( \Gamma(t,s)(\partial_t \zeta u)(s) \right )(x) \ \mathrm{d}s = \int_{t_0-4R^2}^{t} \int_{\mathbb{R}^n} \Gamma(t,x;s,y)(\partial_t \zeta u)(s,y) \ \mathrm{d} \omega(y) \ \mathrm ds.
\end{equation*}
Since $\partial_t \zeta = 0 $ on $Q_{\frac{3}{2}R}(t_0,x_0)$ and outside $Q_{\frac{7}{4}R}(t_0,x_0)$, we have by Cauchy-Schwarz inequality, 
\begin{equation*}
    \left | v_1(t,x) \right |\leq \left ( \int_{\Sigma} \left | \Gamma(t,x;s,y) \right |^2 \ \mathrm d\mu(s,y) \right )^{1/2}  \left ( \int_{Q_{\frac{7}{4}R}(t_0,x_0) } \left | (\partial_t \zeta u)(s,y) \right |^2 \ \mathrm d\mu(s,y) \right )^{1/2} ,
\end{equation*}
where $\Sigma:= (s<t)\cap(Q_{\frac{7}{4}R}(t_0,x_0) \setminus Q_{\frac{3}{2}R}(t_0,x_0))$. Therefore, for almost every $x \in B(x_0,R)$,
\begin{equation}\label{1 converse HK}
    \left | v_1(t,x) \right |\leq \frac{c(n)}{R^2}\left ( \int_{\Sigma} \left | \Gamma(t,x;s,y) \right |^2 \ \mathrm d\mu(s,y) \right )^{1/2}  \left ( \int_{Q_{2R}(t_0,x_0) } \left | u \right |^2 \ \mathrm d\mu \right )^{1/2}.
\end{equation}
\underline{\textit{Estimate of $\left\| v_2(t,\cdot) \right\|_{L^\infty(B(x_0,R))}$}:} likewise, since $- \omega^{-1}A(t,\cdot)\nabla_x u \cdot \nabla_x \zeta$ is compactly supported in time and space, we have by Cauchy-Schwarz inequality and for almost every $x \in B(x_0,R)$,
\begin{align*}
    \left | v_2(t,x) \right |&\leq \left ( \int_{\Sigma} \left | \Gamma(t,x;s,y) \right |^2 \ \mathrm d\mu(s,y) \right )^{1/2}  \left ( \int_{Q_{\frac{7}{4}R}(t_0,x_0) } \left | \left ( - \omega^{-1}A(t,\cdot)\nabla_x u \cdot \nabla_x \zeta \right )(s,y) \right |^2 \ \mathrm d\mu(s,y) \right )^{1/2}\\
    &\leq \frac{M \times c(n)}{R} \left ( \int_{\Sigma} \left | \Gamma(t,x;s,y) \right |^2 \ \mathrm d\mu(s,y) \right )^{1/2}  \left ( \int_{Q_{\frac{7}{4}R}(t_0,x_0) } \left | \nabla_x u (s,y) \right |^2 \ \mathrm d\mu(s,y) \right )^{1/2}.
\end{align*}
Using Caccioppoli inequality from Lemma \ref{lem:Caccioppoli}, we deduce that for almost every $x \in B(x_0,R)$,
\begin{equation}\label{2 Convers HK}
    \left | v_2(t,x) \right |\leq \frac{C(n,M,\nu)}{R^2} \left ( \int_{\Sigma} \left | \Gamma(t,x;s,y) \right |^2 \ \mathrm d\mu(s,y) \right )^{1/2}  \left ( \int_{Q_{2R}(t_0,x_0) } \left |  u  \right |^2 \ \mathrm d\mu \right )^{1/2}.
\end{equation}
\underline{\textit{Estimate of $\left\| v_3(t,\cdot) \right\|_{L^\infty(B(x_0,R))}$}:} to obtain a similar estimate on $v_3$, we use again the representation in Proposition \ref{prop: Pb de Cauchy borné} to write 
\begin{equation*}
    \langle v_3(t), \phi \rangle_{2,\omega} = \int_{t_0-4R^2}^{t} \langle \omega^{-1} A(s,\cdot)(\nabla_x \zeta)u, \nabla_x \Tilde{\Gamma}(s,t)\phi \rangle_{2,\omega} \ \mathrm{d} s, \ \ \forall \phi \in \mathcal{D}(\mathbb{R}^n).
\end{equation*}
We fix $\phi \in \mathcal{D}(\mathbb{R}^n)$ such that $\mathrm{supp}(\phi)\subset B(x_0,R)$. We write
\begin{align*}
   \langle v_3(t), \phi \rangle_{2,\omega} &= \int_{t_0-4R^2}^{t} \int_{\mathbb{R}^n} \omega^{-1}(y) u(s,y)\hspace{0.05cm} A(s,y) \hspace{0.05cm} \nabla_x \zeta (s,y)   \cdot \overline{\nabla_x \Tilde{\Gamma}(s,t)\phi (y)} \ \mathrm{d} \omega(y) \ \mathrm{d}s\\&= \int_{\Sigma} \omega^{-1} u \hspace{0.05cm}A \hspace{0.05cm} \nabla_x \zeta \cdot \overline{\nabla_x \Tilde{\Gamma}(\cdot,t)\phi} \ \mathrm{d} \mu.
\end{align*}
Hence,
\begin{equation*}
    \left |  \langle v_3(t), \phi \rangle_{2,\omega} \right |\leq \frac{M\times c(n)}{R}  \left ( \int_{Q_{\frac{7}{4}R}(t_0,x_0) } \left |  u  \right |^2 \ \mathrm d\mu \right )^{1/2}  \left ( \int_{\Sigma}  | \nabla_x \Tilde{\Gamma}(s,t)\phi (y) |^2 \ \mathrm d\mu(s,y)\right )^{1/2}.
\end{equation*}
We define $\Sigma':=Q_{4R}(t,x_0)\setminus Q_{\sqrt{\frac{9}{8}}R}(t,x_0)$. We have $\Sigma \subset Q_{3R}(t,x_0)\setminus Q_{\sqrt{\frac{5}{4}}R}(t,x_0) \subset \Sigma'$. Using the variant of the Caccioppoli inequality \eqref{Variante Caccioppoli}, we have 
\begin{equation*}
    \int_{\Sigma}  | \nabla_x \Tilde{\Gamma}(s,t)\phi (y)   |^2 \ \mathrm d\mu(s,y) \leq \frac{C(n,M,\nu)}{R^2} \int_{\Sigma'}   |  \Tilde{\Gamma}(s,t)\phi (y)   |^2 \ \mathrm d\mu(s,y),
\end{equation*}
Hence, there is a constant $\Tilde{C}=\Tilde{C}(n,M,\nu)>0$ such that
\begin{align}\label{QQQQQ}
    \left |  \langle v_3(t), \phi \rangle_{2,\omega} \right |\leq \frac{\Tilde{C}}{R^2}  \left ( \int_{Q_{2R}(t_0,x_0) } \left |  u  \right |^2 \ \mathrm d\mu \right )^{1/2}  \left ( \int_{\Sigma'}  |  \Tilde{\Gamma}(s,t)\phi (y)  |^2 \ \mathrm d\mu(s,y)\right )^{1/2}.
\end{align}
Since $\Gamma(t,s)$ represented by the kernel $\Gamma(t,x;s,y)$, its adjoint $\Tilde{\Gamma}(s,t)$ is also represented by the kernel $\Tilde{\Gamma}(s,y;t,x)= \overline{\Gamma(t,x;s,y)}$ by Proposition \ref{prop:solfundgeneralisée}. We can therefore write 
\begin{align*}
    \int_{\Sigma'} |  \Tilde{\Gamma}(s,t)\phi (y)    |^2 \ \mathrm d\mu(s,y) &= \int_{\Sigma'} \left | \int_{B(x_0,R)}  \overline{\Gamma}(t,z;s,y) \phi(z)\ \mathrm d\omega(z) \right |^2 d \mu(s,y)
    \\ & \leq \supess_{z \in B(x_0,R)} \left ( \int_{\Sigma'}  | \Gamma(t,z;s,y)  |^2 \ \mathrm d\mu(s,y)  \right ) \left \| \phi \right \|^2_{L^1_\omega(B(x_0,R))}.
\end{align*}
Using this in \eqref{QQQQQ}, we have 
\begin{equation*}
    \left |  \langle v_3(t), \phi \rangle_{2,\omega} \right |\leq \frac{\Tilde{C}}{R^2}  \left ( \int_{Q_{2R}(t_0,x_0) } \left |  u  \right |^2 \ \mathrm d\mu \right )^{1/2} \left (\supess_{z \in B(x_0,R)} \int_{\Sigma'}  | \Gamma(t,z;s,y)     |^2 \ \mathrm d\mu(s,y)  \right )^{1/2} \left \| \phi \right \|_{L^1_\omega(B(x_0,R))}.
\end{equation*}
The inequality above is valid for all $L^2$-functions on $B(x_0,R)$. Using the Lebesgue differentiation theorem, we deduce that for almost every $x \in B(x_0,R)$,
\begin{equation}\label{3 Converse HK}
    \left |  v_3(t,x) \right | \leq  \frac{\Tilde{C}}{R^2} \left (\supess_{z \in B(x_0,R)} \int_{\Sigma'} | \Gamma(t,z;s,y)     |^2 \ \mathrm d\mu(s,y)  \right )^{1/2} \left ( \int_{Q_{2R}(t_0,x_0) } \left |  u  \right |^2 \ \mathrm d\mu \right )^{1/2}  .
\end{equation}
Combining \eqref{1 converse HK}, \eqref{2 Convers HK}, \eqref{3 Converse HK}, the assumption \eqref{pgb} on the kernel and Lemma \ref{lem:Cruz-Uribe et Rios}, we deduce that for almost every $x \in B(x_0,R)$
\begin{equation}\label{LL}
    \left |  u(t,x) \right | \leq  \frac{C(n,M,\nu,K_0,k_0)}{R^2} \left (\supess_{z \in B(x_0,R)} \int_{\Sigma'} \frac{1}{\omega_{t-s}(z)^2} e ^{-k_0 \frac{\left | z-y \right |^2}{t-s}} \ \mathrm d\mu(s,y)  \right )^{1/2} \left ( \int_{Q_{2R}(t_0,x_0) } \left |  u  \right |^2 \ \mathrm d\mu \right )^{1/2},
\end{equation}
since $\Sigma \subset \Sigma'$. We claim that the following estimate holds for all $z \in B(x_0,R)$.
\begin{equation}\label{Poids}
    \int_{\Sigma'} \frac{1}{\omega_{t-s}(z)^2} e ^{-k_0 \frac{\left | z-y \right |^2}{t-s}} \ \mathrm d\mu(s,y) \leq C(k_0,D) \frac{R^2}{\omega(B(x_0,2R))}.
\end{equation}
Then the estimate \eqref{JJ} follows from \eqref{LL}.\\
It remains to prove the claim \eqref{Poids}. Using a change of variable in $s$, we have for all $z \in B(x_0,R)$,
\begin{align*}
    \int_{\Sigma'} \frac{1}{\omega_{t-s}(z)^2} e ^{-k_0 \frac{\left | z-y \right |^2}{t-s}} \ \mathrm d\mu(s,y) &= \int_{0}^{\frac{9}{8}R^2}\int_{\sqrt{\frac{9}{8}}R\leq \left |x_0- y \right |<4R} + \int_{\frac{9}{8}R^2}^{16R^2}\int_{B(x_0,4R)} \frac{\exp(-k_0 \frac{\left |z- y \right |^2}{r})}{\omega_r(z)^2} \omega(y)\ \mathrm dy \mathrm d r \\&:= I +II.
\end{align*}
For $y \in B(x_0,4R)\setminus B(x_0,\sqrt{\frac{9}{8}}R)$, we have $\left | z-y \right | \ge \left | x_0-y \right |-\left | z-x_0 \right | \ge (\sqrt{\frac{9}{8}}-1)R$. Thus, we have
\begin{align*}
    I&= \sum_{k=1}^{+\infty} \int_{ \frac{9}{8}\frac{R^2}{4^{k+1}}}^{ \frac{9}{8}\frac{R^2}{4^k}}\int_{\sqrt{\frac{9}{8}}R\leq \left |x_0- y \right |<4R}  \frac{\exp(-k_0 \frac{\left | z-y \right |^2}{r})}{\omega_r(z)^2} \omega(y)\ \mathrm dy \mathrm d r
    \\&  \leq \sum_{k=1}^{+\infty} \int_{ \frac{9}{8}\frac{R^2}{4^{k+1}}}^{ \frac{9}{8}\frac{R^2}{4^k}}\int_{\sqrt{\frac{9}{8}}R\leq \left |x_0- y \right |<4R}  \frac{e^{-k_0 \left ( \sqrt{\frac{9}{8}}-1 \right )^2\frac{R^2}{r}}}{\omega_r(z)^2} \omega(y)\ \mathrm dy \mathrm d r
    \\& \leq \sum_{k=1}^{+\infty} \int_{ \frac{9}{8}\frac{R^2}{4^{k+1}}}^{ \frac{9}{8}\frac{R^2}{4^k}} \frac{1}{\omega_r(z)^2} \ \mathrm dr \ \omega(B(x_0,4R)) e^{-\frac{8k_0}{9}\left ( \sqrt{\frac{9}{8}}-1 \right )^2 4^k} 
    \\& \leq \sum_{k=1}^{+\infty} \ \frac{1}{\omega(B(z,\sqrt{\frac{9}{8}}\frac{R}{2^{k+1}}))^2} \frac{9}{8}R^2  \left ( \frac{1}{4^k}- \frac{1}{4^{k+1}} \right ) e^{-\frac{8k_0}{9}\left ( \sqrt{\frac{9}{8}}-1 \right )^2 4^k} \omega(B(x_0,4R))
\end{align*}
The doubling property \eqref{DoublingMuck} imply that for all $k\ge0$, $\omega(B(z,\sqrt{\frac{9}{8}}R)) \leq D^{k+1}\omega(B(z,\sqrt{\frac{9}{8}}\frac{R}{2^{k+1}}))$. Thus,
\begin{align*}
    I\leq \left (  \sum_{k=1}^{+\infty} D^{2(k+1)} \left ( \frac{1}{4^k}- \frac{1}{4^{k+1}} \right ) e^{-\frac{8k_0}{9}\left ( \sqrt{\frac{9}{8}}-1 \right )^2 4^k}\right )  \frac{\omega(B(x_0,4R))}{\omega(B(z,\sqrt{\frac{9}{8}}R))^2} \frac{9}{8} R^2.
\end{align*}
As $|x_0-z| \leq R$, we use the doubling property \eqref{DoublingMuck} to deduce that
\begin{equation}\label{I}
     I \leq C(k_0,D) \frac{R^2}{\omega(B(x_0,2R))}.
\end{equation}
Estimating $II$ is straightforward by simply writing  
\begin{equation}\label{II}
    II \leq \frac{\omega(B(x_0,R))}{\omega(B(z,\sqrt{\frac{9}{8}}R))^2} \left ( 16R^2- \frac{9}{8}R^2 \right ) \leq C(D) \frac{R^2}{\omega(B(x_0,2R))},
\end{equation}
by the doubling property \eqref{DoublingMuck} as $|x_0-z| \leq R$. The estimate \eqref{Poids} follows from \eqref{I} and \eqref{II}. 
\end{proof}

\section{Case of real-valued coefficients}\label{section 5} 
In this section, we assume that the matrix-valued function $A$ has real-valued coefficients.
\subsection{The Cauchy problem and the generalized fundamental solution}
As the coefficients of $A$ are real-valued, the operators $\Gamma(t,s)$ are nonnegative, as we will see in the next lemma. 
\begin{lem}\label{Gamma est un opérateur positif}
    If $f \in L^2_\omega(\mathbb{R}^n)$ is a real-valued nonnegative function, then $\Gamma(t,s)f$ is also real-valued and nonnegative for all $t \geq s$.
\end{lem}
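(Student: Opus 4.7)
The plan is to handle the two claims separately. Throughout, set $u(t) := \Gamma(t,s)f$ for $t \geq s$, which by Theorem \ref{thm:FundSol}(A) and Proposition \ref{prop: Pb de Cauchy borné} is the unique function in $L^1_{\mathrm{loc}}([s,\infty);H^1_\omega(\mathbb{R}^n))$ with $\int_s^\infty \|\nabla_x u(\tau)\|_{2,\omega}^2\,\mathrm{d}\tau < \infty$ solving $\mathcal{H}u=0$ on $(s,\infty)\times\mathbb{R}^n$ with $u(s)=f$.

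For the reality statement, the key observation is that the complex conjugate $\bar{u}$ solves the same Cauchy problem. Indeed, since $A$ has real-valued coefficients, $\bar{A}=A$, so conjugating $\partial_t u = \omega^{-1}\mathrm{div}_x(A\nabla_x u)$ gives $\partial_t \bar{u} = \omega^{-1}\mathrm{div}_x(A\nabla_x \bar{u})$, and the initial data transfers to $\bar{u}(s)=\bar{f}=f$. By uniqueness in Proposition \ref{prop: Pb de Cauchy borné}, $\bar{u}=u$, so $u$ is real-valued.

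For nonnegativity, assume further $f \geq 0$; I claim $u(t) \geq 0$ almost everywhere. The idea is to test the equation against a smooth approximation of $-u^-$, where $u^-:=\max(-u,0)$. Pick a convex $\Phi_\varepsilon \in C^2(\mathbb{R})$ with $\Phi_\varepsilon \geq 0$, $\Phi_\varepsilon \equiv 0$ on $[0,\infty)$, $\Phi_\varepsilon' $ bounded and Lipschitz, $\Phi_\varepsilon''\geq 0$, and $\Phi_\varepsilon(r) \uparrow \tfrac{1}{2}(r^-)^2$ as $\varepsilon\downarrow 0$. Since $\Phi_\varepsilon'$ is bounded Lipschitz with $\Phi_\varepsilon'(0)=0$, composition gives $\Phi_\varepsilon'(u) \in L^2((s,T);H^1_\omega(\mathbb{R}^n))$ with $\nabla_x\Phi_\varepsilon'(u)=\Phi_\varepsilon''(u)\nabla_x u$, so this is a legitimate test element. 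Adapting the proof of Proposition \ref{prop:energyboundedinterval}, the map $t\mapsto \int_{\mathbb{R}^n} \Phi_\varepsilon(u(t))\,\mathrm{d}\omega$ is absolutely continuous with
\begin{equation*}
    \frac{d}{dt}\int_{\mathbb{R}^n}\Phi_\varepsilon(u(t))\,\mathrm{d}\omega = -\int_{\mathbb{R}^n} \Phi_\varepsilon''(u)\,\omega^{-1} A(t,\cdot)\nabla_x u \cdot \nabla_x u \,\mathrm{d}\omega \leq 0,
\end{equation*}
where the inequality uses the coercivity in \eqref{ellipticité A} together with $\Phi_\varepsilon''\geq 0$ and the reality of $u$ and $A$. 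Since $\Phi_\varepsilon(f)=0$ as $f\geq 0$, integrating from $s$ to $t$ yields $\int \Phi_\varepsilon(u(t))\,\mathrm{d}\omega = 0$, and letting $\varepsilon \to 0$ via monotone convergence gives $\int (u(t)^-)^2\,\mathrm{d}\omega = 0$, so $u(t)\geq 0$ almost everywhere.

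The main obstacle is making the chain rule identity for $t\mapsto \int \Phi_\varepsilon(u(t))\,\mathrm{d}\omega$ rigorous, since Proposition \ref{prop:energyboundedinterval} is stated for $t\mapsto \|u(t)\|_{2,\omega}^2$, not for arbitrary convex functionals. The remedy is to rerun the proof of Proposition \ref{prop:energyboundedinterval}: after noting that $\partial_t u = -\omega^{-1}\mathrm{div}_x(\omega F)$ in $\mathcal{D}'$ with $F = -A\nabla_x u/\omega \in L^2((s,T);L^2_\omega(\mathbb{R}^n)^n)$ (by \eqref{ellipticité A}), one mollifies $u$ in time and tests the mollified equation against $\Phi_\varepsilon'(u)$ (which is an admissible test element because $\Phi_\varepsilon'$ is bounded Lipschitz, so $\Phi_\varepsilon'(u)\in L^2((s,T);H^1_\omega(\mathbb{R}^n))$ with distributional gradient $\Phi_\varepsilon''(u)\nabla_x u$); the usual commutator arguments then pass to the limit and produce the stated identity. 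Everything else (density of truncations, monotone convergence in $\varepsilon$, and coercivity) is standard.
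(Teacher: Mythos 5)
Your proof of the reality statement is the same as the paper's (the paper compares $u$ with $\mathrm{Re}(u)$ rather than $\bar u$, but the mechanism — uniqueness in Proposition \ref{prop: Pb de Cauchy borné} for real $A$ — is identical). For nonnegativity, however, you take a genuinely different route. The paper mollifies the coefficients in time (adding a $\frac1p\langle\cdot,\cdot\rangle_{2,\omega}$ term to restore $H^1_\omega$-coercivity), invokes Kato's abstract theorem to get strong time-differentiability of the approximating propagators $U_p$, proves positivity for the regularized problem via the monotonicity of $t\mapsto\|U_p(t)-|U_p(t)|\|_{2,\omega}^2$, and then passes to the limit $p\to\infty$ by weak compactness. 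You instead keep the rough coefficients and run a Stampacchia-type convex-entropy argument directly on $u=\Gamma(\cdot,s)f$. This trades the paper's two external ingredients (Kato's theorem and the identification of the limit of $\Gamma_p$) for one internal one: an extension of the Lions-type identity of Proposition \ref{prop:energyboundedinterval} from the quadratic functional to $t\mapsto\int\Phi_\varepsilon(u(t))\,\mathrm d\omega$. You correctly identify this as the only real obstacle, and your remedy works in this weighted setting: the equation puts $\partial_t u=-\omega^{-1}\mathrm{div}_x(\omega F)$ with $F=-\omega^{-1}A\nabla_x u\in L^2((s,T);L^2_\omega(\mathbb{R}^n)^n)$ by \eqref{ellipticité A}; the composition $\Phi_\varepsilon'(u)$ lies in $H^1_\omega$ with $\nabla_x\Phi_\varepsilon'(u)=\Phi_\varepsilon''(u)\nabla_x u$ because $H^1_\omega(\mathbb{R}^n)\subset W^{1,1}_{\mathrm{loc}}$ by the $A_2$ condition; and the integration by parts against $\Phi_\varepsilon'(u_\delta(t))$ extends from $\mathcal{D}(\mathbb{R}^n)$ by density in $H^1_\omega(\mathbb{R}^n)$, since the time-mollified derivative $\partial_t u_\delta(t)=(u*\dot\rho_\delta)(t)$ actually lands in $L^2_\omega(\mathbb{R}^n)$. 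Two small points to make explicit when you write this up: use $|\Phi_\varepsilon(b)-\Phi_\varepsilon(a)|\le\|\Phi_\varepsilon''\|_\infty\max(|a|,|b|)\,|b-a|$ (not just the bound on $\Phi_\varepsilon'$) so that Cauchy--Schwarz handles the convergences $\int\Phi_\varepsilon(u_\delta(\tau))\,\mathrm d\omega\to\int\Phi_\varepsilon(u(\tau))\,\mathrm d\omega$ and $\int\Phi_\varepsilon(u(\sigma))\,\mathrm d\omega\to\int\Phi_\varepsilon(f)\,\mathrm d\omega=0$ as $\sigma\downarrow s$ on the infinite-mass space $(\mathbb{R}^n,\mathrm d\omega)$; and note that $u\in L^2((s,T);H^1_\omega(\mathbb{R}^n))$ (not merely $L^1$) on bounded intervals, by Theorem \ref{thm:FundSol}\,A), which is what the mollification step needs. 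With these details supplied, your argument is complete and arguably more self-contained than the paper's.
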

\begin{proof}
We fix $s \in \mathbb{R}$. The result is obvious if $t=s$ and we only need to treat the case $t>s$. For all $t>s$, we set $U(t):=\Gamma(t,s)f$. For any $\mathfrak{T}>s$, $U,\mathrm{Re}(U) \in L^2((s,\mathfrak{T});H^1_\omega(\mathbb{R}^n))$,
and are both solutions to the Cauchy Problem
\begin{align*}
\left\{
    \begin{array}{ll}
        \mathcal{H}u=0 \ \mathrm{in} \  \mathcal{D}'((s,\mathfrak{T})\times \mathbb{R}^n), \\
        u(\tau) \rightarrow f \  \mathrm{ in } \ \mathcal{D'}(\mathbb{R}^n) \ \mathrm{as} \ \tau \rightarrow s^+.
    \end{array}
\right.
\end{align*} 
By uniqueness in Proposition \ref{prop: Pb de Cauchy borné}, we have $U=\mathrm{Re}(U)$ on $(s,\infty)$. To prove that $U$ is a nonnegative function, we proceed in two steps using an approximation argument.
\newline
\textit{Step 1: regularizing coefficients of $A$:} We follow \cite{ataei2024fundamental}. Let $\theta \in \mathcal{D}(\mathbb{R})$ a nonnegative function with $\int_\mathbb{R} \theta(t)\mathrm{d}t=1$. For all $p\ge1$, let $\theta_p(t)=p \theta(pt)$ be the associated mollifying sequence. We set $A_p(t,x):=(\theta_p \star A(\cdot,x))(t)$, \textit{i.e.}, we mollify the matrix-valued function A in the time variable only. For all $p \geq 1$ and $t \in \mathbb{R}$, we set 
$$B_t^p (u,v):= \int_{\mathbb{R}^n} \omega^{-1}A_p(t,\cdot)\nabla_x u \cdot \overline{ \nabla_x v} \ \mathrm{d}\omega + \frac{1}{p}\langle u, v \rangle_{2,\omega}.$$
We check easily that $\min(1/l, \nu )\left\| u \right\|_{H_\omega^1(\mathbb{R}^n)}^2 \leq \mathrm{Re}(B_t^p(u,u))$ and $\mathrm{Im}(B_t^p(u,u)) \leq \frac{M}{\nu}\mathrm{Re}(B_t^p(u,u))$. In particular, the quadratic form of $\mathrm{Re}(B_t^p(\cdot,\cdot))$ is closed. Moreover, we have 
$$\left| B_t^p(u,u)-B_s^p(u,u) \right| \leq M  \left\| \frac{\mathrm{d} \theta_p}{\mathrm{d} t}\right\|_{L^1(\mathbb{R})} \left| t-s \right| \left\|\nabla_x u  \right\|_{2,\omega}^2\leq \frac{pM \| \dot{\theta} \|_{L^1(\mathbb{R})}}{\nu} \left| t-s \right|\mathrm{Re}(B_t^p(u,u)),$$ 
where $\dot{\theta}$ is the derivative of $\theta$. For all $p\geq 1$, we set $U_p(t):=\Gamma_p(t,s)f$ where $\Gamma_p$ is the fundamental solution of the parabolic operator associated to the family $(B^p_t)_{{t \in \mathbb{R}}}$. Combining \cite[Theorem III]{kato1961abstract} with uniqueness in in $L^2((s,\mathfrak{T});H^1_\omega(\mathbb{R}^n))$ for any $\mathfrak{T}>s$, we have for all $p \geq 1$, $U_p : (s,\infty) \rightarrow L^2_\omega(\mathbb{R}^n)$ is strongly differentiable. Note that $U_p$ is a real-valued function by the same argument as we did for $U$. Since $\nabla_x U_p(t) \in L^2_\omega(\mathbb{R}^n)$, we have $\partial_t \left |U_p(t)  \right |, \nabla_x \left |  U_p(t)\right | \in L^2_\omega(\mathbb{R}^n)$ with
 \begin{align*}
 \partial_t \left |U_p(t)  \right |  =\left\{\begin{matrix}
 \partial_t U_p(t) \ \mathrm{if} \ U_p(t)\geq 0, \\ 
 -\partial_t U_p(t) \ \mathrm{if} \ U_p(t)< 0 ,
 \\ 
 \end{matrix}\right. 
\ \ \ \text{and} \ \ 
\nabla_x \left |  U_p(t)\right |    =\left\{\begin{matrix}
\nabla_x U_p(t)  \ \mathrm{if} \ U_p(t)\geq 0, \\ 
 -\nabla_x U_p(t) \ \mathrm{if} \ U_p(t)< 0 .
 \\ 
 \end{matrix}\right.  
 \end{align*}
Using this, we have
\begin{align*}
      - \frac{\mathrm{d}}{\mathrm{d}t}  \langle U_p(t)-\left | U_p(t) \right | , U_p(t)-\left | U_p(t) \right |\rangle_{2,\omega}
      &= -2 \langle \partial_t \left ( U_p(t)-\left | U_p(t) \right | \right ) , U_p(t)-\left | U_p(t) \right |\rangle_{2,\omega}
      \\&= -  4\langle \partial_t U_p(t) , U_p(t)-\left | U_p(t) \right |\rangle_{2,\omega}
      \\&= 4\int_{\mathbb{R}^n} \omega^{-1} A(t,\cdot)\nabla_x U_p(t) \cdot \nabla_x (U_p(t)-\left | U_p(t) \right |) \ \mathrm{d}\omega \geq 0.
\end{align*}
Integrating from $s$ to $t$ in this inequality, we see that $t \mapsto \| U_p(t)-|U_p(t)|\|^2_{2,\omega}$ is a non-increasing function. Since it vanishes at $t=s$, we have for all $t>s$, $ U_p(t)=\left | U_p(t) \right |$, that is $\Gamma_p(t,s)f= \left | \Gamma_p(t,s)f \right |$, hence $\Gamma_p(t,s)$ is a nonnegative operator.\newline
\textit{Step 2: passing to the limit:} using uniqueness in $L^2((s,\mathfrak{T});H^1_\omega(\mathbb{R}^n))$ for any $\mathfrak{T}>s$ combined with the boundedness of $(U_p)_{p\ge1}$ in $L^2((s,\mathfrak{T});H^1_\omega(\mathbb{R}^n))$ provided by the energy equality, it is easy to check that, up to extracting a sub-sequence, $(U_p)_{p\ge1} $ converges weakly to $U$ when $p \to \infty$ in $L^2((s,\mathfrak{T});L_\omega^2(\mathbb{R}^n))$ for any $\mathfrak{T}>s$, and therefore $U(t)$ is nonnegative for all $t \geq s$.
\end{proof}
Combining Caccioppoli inequality in Lemma \ref{lem:Caccioppoli}, a weighted Sobolev inequality \cite[Theorem 15.26]{heinonen2018nonlinear} and the Moser's iteration principle, we have the following $L^\infty$-estimate on nonnegative local weak solutions. For a proof, one can follow the classical scheme or see \cite[Proposition 2.1]{ishige1999behavior} with lower order coefficients equal to zero.
\begin{lem}\label{lem: Moser cas reel}
    Let $(t_0, x_0) \in \mathbb{R}^{1+n}$ and $R>0$. If $u$ is is a nonnegative local weak solution of $\mathcal{H}u=0$ in a neighborhood of $Q_{2R}(t_0,x_0)$, then 
    \begin{equation*}
        \supess_{Q_{R}(t_0,x_0)} u = \left\|u \right\|_{L^\infty(Q_R(t_0,x_0))}   \leq B \left ( \frac{1}{\mu(Q_{2R}(t_0,x_0))} \int_{Q_{2R}(t_0,x_0)} u ^2 \ \mathrm d \mu\right  )^{1/2}
    \end{equation*}
    where $B=B(n,D,M,\nu)>0$ is a constant. The same estimate holds for nonnegative local weak solution of $\mathcal{H}^\star v=0$.
\end{lem}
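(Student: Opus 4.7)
This is the classical Moser iteration adapted to the weighted parabolic setting. The plan is to combine a Caccioppoli-type energy inequality for powers $u^k$ (in the spirit of Lemma \ref{lem:Caccioppoli}) with a weighted Sobolev embedding valid for $A_2$-weights, and then iterate over a geometric sequence of exponents along a shrinking family of cylinders in order to pass from $L^2$ to $L^\infty$.

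First I would derive the energy inequality for $u^k$ with $k\geq 1$. Since $u\geq 0$ is a local weak solution, one would test $\mathcal{H}u=0$ against $\zeta^2 u^{2k-1}$, where $\zeta$ is a smooth parabolic cutoff supported in a neighborhood of $Q_{2R}(t_0,x_0)$ (after first truncating $u$ to $u\wedge N$ to ensure admissibility of the test function, then passing to the limit by monotone convergence). Using the ellipticity condition \eqref{ellipticité A} and the identity $\nabla_x u^k = k u^{k-1}\nabla_x u$, this should yield
\begin{equation*}
\sup_{t}\int \zeta^2 u^{2k}(t,\cdot)\,\mathrm d\omega + \iint \zeta^2|\nabla_x u^k|^2\,\mathrm d\mu \leq C k^2 \iint u^{2k}\bigl(|\nabla_x\zeta|^2+|\partial_t\zeta|\bigr)\,\mathrm d\mu.
\end{equation*}
Next I would invoke the weighted Sobolev inequality \cite[Thm. 15.26]{heinonen2018nonlinear}: for the $A_2$-weight $\omega$ there exists $\kappa=\kappa(n,[\omega]_{A_2})>1$ such that, for any $v\in H^1_\omega$ supported in a ball $B$ of radius $r$,
\begin{equation*}
\Bigl(\frac{1}{\omega(B)}\int_B|v|^{2\kappa}\,\mathrm d\omega\Bigr)^{1/(2\kappa)}\leq C r\Bigl(\frac{1}{\omega(B)}\int_B|\nabla_x v|^2\,\mathrm d\omega\Bigr)^{1/2}.
\end{equation*}
Applied to $v=\zeta u^k$, combined with the energy inequality above and the standard parabolic interpolation, this produces a reverse Hölder estimate of the form
\begin{equation*}
\Bigl(\frac{1}{\mu(Q_{r'})}\int_{Q_{r'}}u^{2k\chi}\,\mathrm d\mu\Bigr)^{1/(2k\chi)}\leq\Bigl(\frac{Ck^2}{(r-r')^2}\Bigr)^{1/(2k)}\Bigl(\frac{1}{\mu(Q_r)}\int_{Q_r}u^{2k}\,\mathrm d\mu\Bigr)^{1/(2k)}
\end{equation*}
whenever $Q_{r'}\subset Q_r\subset Q_{2R}(t_0,x_0)$, with gain exponent $\chi=1+2/n_\kappa>1$, where $n_\kappa=2\kappa/(\kappa-1)$ acts as an effective parabolic dimension.

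Finally, I would iterate with $k_j=\chi^j$ and a radius sequence $r_j=R(1+2^{-j})$, so that $r_0=2R$ and $r_j\searrow R$. Taking the product of the resulting inequalities and letting $j\to\infty$ yields
\begin{equation*}
\|u\|_{L^\infty(Q_R(t_0,x_0))} \leq B \Bigl(\frac{1}{\mu(Q_{2R}(t_0,x_0))}\int_{Q_{2R}(t_0,x_0)}u^2\,\mathrm d\mu\Bigr)^{1/2},
\end{equation*}
with $B=\prod_{j\geq 0}\bigl(Ck_j^2/(r_j-r_{j+1})^2\bigr)^{1/(2k_j)}<\infty$, convergence of the product being ensured by the geometric growth of $k_j$ against only linear growth of $\log\bigl(Ck_j^2/(r_j-r_{j+1})^2\bigr)$ in $j$. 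The main obstacle is the rigorous justification of the test function $\zeta^2 u^{2k-1}$, which requires truncation and a bootstrap from local boundedness (the case $k=1$), together with the careful bookkeeping of constants through the iteration; once these are under control, the resulting constant $B$ depends only on $n$, $M$, $\nu$, and the doubling constant $D$ (through $\kappa$). The same argument applies to nonnegative local weak solutions of $\mathcal{H}^\star v=0$ by time reversal, using the backward cylinders $Q^\star_r(t_0,x_0)$.
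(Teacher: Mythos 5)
Your proposal is correct and takes essentially the same route as the paper, whose proof of this lemma consists precisely of invoking the Caccioppoli inequality of Lemma \ref{lem:Caccioppoli}, the weighted Sobolev inequality of Heinonen--Kilpel\"ainen--Martio (Thm.\ 15.26), and Moser's iteration, deferring details to Ishige's Proposition 2.1. Your sketch fleshes out that classical scheme with a consistent gain exponent $\chi=2-1/\kappa$ and a convergent iteration, so nothing essential is missing.
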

By combining Lemma \ref{lem: Moser cas reel} above, Lemma \ref{Gamma est un opérateur positif} and Proposition \ref{ThmHkIM}, we obtain the following result.
\begin{prop}
The operator $\mathcal{H}$ admits a nonnegative generalized fundamental solution $\Gamma(t,x;s,y)$ with, for all $t>s$, almost everywhere pointwise Gaussian upper bound, that is,
    \begin{equation}\label{1}
      0\leq  \Gamma(t,x;s,y)  \leq  \frac{K_0}{\sqrt{\omega_{t-s}(x)}\sqrt{\omega_{t-s}(y)}} e^{-k_0 \frac{\left | x-y \right |^2}{t-s}},
    \end{equation}
for almost every $(x,y) \in \mathbb{R}^{2n}$, where $K_0=K_0(n,D,M,\nu)>0$ and $k_0=k_0(M,\nu)>0$ are constants.
\end{prop}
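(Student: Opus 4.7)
My plan is to combine the three cited ingredients in the obvious order: first upgrade Lemma \ref{lem: Moser cas reel} to the full form of Moser's $L^2$--$L^\infty$ estimates from Definition \ref{def:Moser} for both $\mathcal{H}$ and $\mathcal{H}^\star$, then invoke Proposition \ref{ThmHkIM} to produce a generalized fundamental solution with Gaussian upper bound, and finally use Lemma \ref{Gamma est un opérateur positif} to deduce that the kernel itself is nonnegative almost everywhere.

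For the upgrade step, I would use the realness of $A$ to split signs. If $u$ is any local weak solution of $\mathcal{H}u=0$, then $\overline{u}$ is also a local weak solution (because $A=\overline{A}$), so $\mathrm{Re}(u)$ and $\mathrm{Im}(u)$ are real-valued local weak solutions. For a real-valued local weak solution $v$, the positive and negative parts $v^\pm$ are nonnegative local weak \emph{sub}-solutions of $\mathcal{H}$, as one checks by inserting $v^\pm\,\zeta$ as a test function (via an approximation by $\max(v,\varepsilon)-\varepsilon$ and $\max(-v,\varepsilon)-\varepsilon$). The Moser iteration underlying Lemma \ref{lem: Moser cas reel} applies equally to nonnegative subsolutions: it never uses the equation on $v^\pm$ itself, only the Caccioppoli-type bound obtained by testing against $(v^\pm)^{2p-1}\zeta^2$, plus the weighted Sobolev embedding. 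Hence the $L^2$--$L^\infty$ bound holds for $v^+$ and $v^-$, and by $|u|^2 \leq 2(\mathrm{Re}(u)^+)^2+2(\mathrm{Re}(u)^-)^2+2(\mathrm{Im}(u)^+)^2+2(\mathrm{Im}(u)^-)^2$ we get the bound \eqref{Moser H} for $|u|$ with a constant still depending only on $n,D,M,\nu$. Since $A^\star=A^T$ is also real, the same argument works for $\mathcal{H}^\star$, giving \eqref{Moser Hstar}.

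Proposition \ref{ThmHkIM} then applies directly and produces a kernel $\Gamma(t,x;s,y)$ obeying the Gaussian upper bound in \eqref{1} with $K_0=K_0(n,D,M,\nu)$ and $k_0=k_0(M,\nu)$. It remains to see that $\Gamma(t,x;s,y)\ge 0$ for almost every $(x,y)\in\mathbb{R}^{2n}$. Fix $t>s$. By Lemma \ref{Gamma est un opérateur positif}, for every nonnegative $f\in L^2_\omega(\mathbb{R}^n)$ one has $\Gamma(t,s)f=\int_{\mathbb{R}^n}\Gamma(t,\cdot;s,y)f(y)\,\mathrm{d}\omega(y)\ge 0$ a.e. If the set $N:=\{(x,y):\Gamma(t,x;s,y)<0\}$ had positive Lebesgue measure, then by Fubini there would exist a positive-measure set of $x$'s for which $N_x:=\{y:\Gamma(t,x;s,y)<0\}$ has positive measure; taking $f=\mathbf 1_{N_x\cap B(0,R)}$ for some large $R$ would give $\Gamma(t,s)f(x)<0$, contradicting the pointwise a.e.\ nonnegativity of $\Gamma(t,s)f$. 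Therefore $\Gamma(t,x;s,y)\ge 0$ a.e., which completes the proof.

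The main obstacle is really the first step: the statement of Lemma \ref{lem: Moser cas reel} only provides the $L^2$--$L^\infty$ bound for nonnegative solutions, whereas Proposition \ref{ThmHkIM} needs the bound for all complex local weak solutions in the sense of Definition \ref{def:Moser}. Everything else is bookkeeping, and the constants depend only on the structural data $n,D,M,\nu$ since these are the only parameters feeding into Lemmas \ref{Gamma est un opérateur positif}--\ref{lem: Moser cas reel} and into the proof of Proposition \ref{ThmHkIM}.
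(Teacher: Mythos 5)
Your proposal follows exactly the route the paper takes: the paper's entire proof is the one-line instruction to combine Lemma \ref{lem: Moser cas reel}, Lemma \ref{Gamma est un opérateur positif} and Proposition \ref{ThmHkIM}, and your upgrade of the Moser bound from nonnegative real solutions to arbitrary complex local weak solutions (via $\mathrm{Re}(u)$, $\mathrm{Im}(u)$ and their positive and negative parts, which are nonnegative subsolutions to which the iteration of Lemma \ref{lem: Moser cas reel} still applies) is precisely the bookkeeping the paper leaves implicit. The only point needing a small repair is the final Fubini step: taking $f=\mathbf{1}_{N_x\cap B(0,R)}$ for a single $x$ only violates nonnegativity of $\Gamma(t,s)f$ at that one point, which is a null set and hence no contradiction; instead apply Lemma \ref{Gamma est un opérateur positif} to the countable family $f=\mathbf{1}_B$ over balls $B$ with rational centers and radii, so that for a.e.\ $x$ one has $\int_B\Gamma(t,x;s,y)\,\mathrm{d}\omega(y)\ge 0$ simultaneously for all such $B$, and then Lebesgue differentiation in $y$ yields $\Gamma(t,x;s,y)\ge 0$ for a.e.\ $(x,y)$.
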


By using this proposition together with Proposition \ref{prop: Pb de Cauchy borné} with $f = 0$ and $\rho = \infty$, we obtain the following result, which summarizes all the theory developed in the case of real-valued coefficients.

\begin{cor}[Cauchy problem on $(0,\mathfrak{T})$]
Consider $0<\mathfrak{T}\le \infty$, $\psi \in L^2_\omega(\mathbb{R}^n)$ and $g \in L^1((0,\mathfrak{T});L^2_\omega(\mathbb{R}^n))$. Then there exists a unique $u \in L^1((0,\mathfrak{T});H^1_\omega(\mathbb{R}^n))$ with $\int_0^\mathfrak{T}\|\nabla_x u(t)\|^2_{2,\omega}\, \mathrm{d}t<\infty$ if $\mathfrak{T}<\infty$ and $u \in L^1_{\mathrm{loc}}((0,\infty);H^1_\omega(\mathbb{R}^n))$ with $\int_0^\infty\|\nabla_x u(t)\|^2_{2,\omega}\, \mathrm{d}t<\infty$ if $\mathfrak{T}=\infty$ solution to the Cauchy problem 
\begin{align*}
\left\{
    \begin{array}{ll}
        \partial_t u -\omega^{-1} \mathrm{div}_x(A(t,\cdot)\nabla_x u)= g \ \mathrm{in} \  \mathcal{D}'((0,\mathfrak{T})\times \mathbb{R}^n), \\
        u(t,\cdot) \to \psi \ \mathrm{in} \ \mathcal{D}'(\mathbb{R}^n) \ \mathrm{as} \ t \to 0^+.
    \end{array}
\right.
\end{align*} 
Moreover, $u \in C([0,\mathfrak{T}];L^2_\omega(\mathbb{R}^n))$ with $u(0)=\psi$, $\lim_{t \to \infty} u(t)=0$ if $\mathfrak{T}=\infty$ (by convention, set $u(\infty)=0$), $t \mapsto \| u(t)  \|^2_{2,\omega}$ is absolutely continuous on $[0,\mathfrak{T}]$ and we can write the energy equalities. Furthermore, we have $(-\Delta_\omega)^{\alpha/2} u \in  L^r((0,\mathfrak{T});L^2_\omega(\mathbb{R}^n))$ for any $\alpha \in (0,1]$ with $r=\frac{2}{\alpha} \in [2,\infty)$ with
\begin{align*}
            \sup_{t\in [0,\mathfrak{T}]} \| u(t) \|_{2,\omega}+ \| (-\Delta_\omega)^{\alpha /2} u\|_{L^r((0,\mathfrak{T});L^2_\omega(\mathbb{R}^n))} 
            \leq C  (  \left \| g \right \|_{L^{1}((0,\mathfrak{T});L^2_\omega(\mathbb{R}^n))}+  \| \psi  \|_{2,\omega}  ),
\end{align*} 
where $C=C(M,\nu)>0$ is a constant. Lastly,  for all $t \in [0,\mathfrak{T}]$, we have the following representation of $u$ (by convention, set $\Gamma(\infty,x;s,y)=0$ if $\mathfrak{T}=\infty$): 
\begin{equation*}
    u(t,x)= \int_{\mathbb{R}^n} \Gamma(t,x;0,y)\psi(y) \mathrm d \omega(y)+\int_0^t \int_{\mathbb{R}^n} \Gamma(t,x;s,y) g(s,y) \mathrm d \omega(y) \mathrm d s \ \ \text{for a.e } x \in \mathbb{R}^n,
\end{equation*}
where $\Gamma(t,x;s,y)$ is the generalized fundamental solution of $\mathcal{H}$ of Proposition \ref{lem: Moser cas reel}, and which satisfies all the properties stated in Proposition \ref{prop:solfundgeneralisée}.
\end{cor}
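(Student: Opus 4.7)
The plan is to reduce everything to Proposition~\ref{prop: Pb de Cauchy borné} applied with $f=0$, $\rho=\infty$ (so that $\beta=0$ and $\rho'=1$), and then to replace the abstract Bochner representation \eqref{rep} by the pointwise kernel representation that becomes available because, in the real-valued setting, $\Gamma(t,s)$ is an integral operator with kernel $\Gamma(t,x;s,y)$ satisfying the Gaussian upper bound \eqref{1}.

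First, applying Proposition~\ref{prop: Pb de Cauchy borné} with these parameters directly gives existence and uniqueness of $u$ in the prescribed classes, the continuity $u\in C([0,\mathfrak{T}];L^2_\omega(\mathbb{R}^n))$ with $u(0)=\psi$ (and $\lim_{t\to\infty}u(t)=0$ when $\mathfrak{T}=\infty$), the absolute continuity of $t\mapsto \|u(t)\|_{2,\omega}^2$ on $[0,\mathfrak{T}]$ together with the energy equalities, the smoothing estimates on $(-\Delta_\omega)^{\alpha/2}u$, and the abstract representation
\begin{equation*}
    u(t)=\Gamma(t,0)\psi+\int_0^t\Gamma(t,\tau)g(\tau)\ \mathrm d\tau,
\end{equation*}
where, crucially, the second integral converges strongly in $L^2_\omega(\mathbb{R}^n)$ since $\beta=0$. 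The $L^{\rho'}$-time norm of $g$ appearing in the \textit{a priori} estimate becomes the $L^1$-time norm, and the constant $C(M,\nu,\rho)$ becomes a constant depending only on $(M,\nu)$ once $\rho=\infty$ is fixed.

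The next step is to rewrite both terms pointwisely. For the first term, the kernel representation $(\Gamma(t,0)\psi)(x)=\int_{\mathbb{R}^n}\Gamma(t,x;0,y)\psi(y)\ \mathrm d\omega(y)$ for a.e.\ $x$ is immediate from \eqref{1}. For the Bochner integral term, I would introduce
\begin{equation*}
    J(t,x):=\int_0^t\int_{\mathbb{R}^n}\Gamma(t,x;\tau,y)\,g(\tau,y)\ \mathrm d\omega(y)\,\mathrm d\tau,
\end{equation*}
and show, via Fubini's theorem, that the iterated integral converges absolutely for a.e.\ $x$ and that $J(t,\cdot)$ coincides in $L^2_\omega(\mathbb{R}^n)$ with the Bochner integral $\int_0^t\Gamma(t,\tau)g(\tau)\ \mathrm d\tau$. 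Absolute convergence follows from the uniform bound $\sup_x\int_{\mathbb{R}^n}|\Gamma(t,x;\tau,y)|\ \mathrm d\omega(y)\le C(K_0,k_0,D)$ derived during the proof of Proposition~\ref{prop:solfundgeneralisée}, combined with Cauchy--Schwarz in $y$ and the $L^1$-in-time integrability of $\|g(\tau)\|_{2,\omega}$. The identification with the Bochner integral can then be obtained by testing against arbitrary $\varphi\in\mathcal{D}(\mathbb{R}^n)$, invoking Fubini once more together with the adjointness relation $\Tilde{\Gamma}(\tau,y;t,x)=\overline{\Gamma(t,x;\tau,y)}$ from Proposition~\ref{prop:solfundgeneralisée} to relate inner products against $\varphi$ with the strong Bochner limit.

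The main technical point, and the only part not already packaged in Proposition~\ref{prop: Pb de Cauchy borné}, is this passage from the abstract Bochner representation to the pointwise kernel formula; once Fubini is justified through \eqref{1} and Lemma~\ref{lem:Cruz-Uribe et Rios}, the remaining assertions of the corollary are inherited verbatim from Proposition~\ref{prop: Pb de Cauchy borné}, with no additional argument required.
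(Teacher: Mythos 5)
Your proposal is correct and follows the same route as the paper, which obtains this corollary precisely by specializing Proposition \ref{prop: Pb de Cauchy borné} to $f=0$ and $\rho=\infty$ and then using the Gaussian upper bound \eqref{1} to convert the Bochner representation into the pointwise kernel formula. One small caution on the step you single out: for fixed $x$ the Cauchy--Schwarz bound $\int|\Gamma(t,x;\tau,y)||g(\tau,y)|\,\mathrm d\omega(y)\le C\,\omega_{t-\tau}(x)^{-1/2}\|g(\tau)\|_{2,\omega}$ degenerates as $\tau\to t^-$ and need not be integrable in $\tau$ when $g$ is merely $L^1$ in time, so the a.e.\ absolute convergence is better justified by Tonelli in $x$ combined with the uniform $L^2_\omega$-boundedness of the operators with kernel $|\Gamma(t,\cdot;\tau,\cdot)|$ (Schur's test from the two uniform $L^1_\omega$ kernel bounds established in the proof of Proposition \ref{prop:solfundgeneralisée}) --- a routine fix that does not affect the argument.
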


\begin{rem}
    Setting $g=0$, the case $\mathfrak{T}<\infty$ reproves \cite[Theorem 1.2]{ataei2024fundamental}, and we even have a larger uniqueness space.
\end{rem}

\subsection{Additional properties using regularity theory for weak solutions}
The Harnack inequality is known in the context of real-valued coefficients. In this section, we derive Gaussian lower bounds using this result.

For all $(t_0,x_0) \in \mathbb{R} \times \mathbb{R}^n$ and $R>0$, we introduce the cylinders
\begin{align*}
    C_R(t_0,x_0)&:=(t_0-R^2,t_0+R^2) \times B(x_0, 2R),\\
    C^+_R(t_0,x_0)&:=(t_0+\frac{1}{4}R^2,t_0+\frac{3}{4}R^2)\times B(x_0,R),\\
    C^-_R(t_0,x_0)&:=(t_0-\frac{3}{4}R^2,t_0-\frac{1}{4}R^2)\times B(x_0,R).
\end{align*}
The Harnack inequality is stated in the following lemma. We refer to \cite[Thm. A]{ishige1999behavior} for the proof.
\begin{lem}\label{lem:Harnack}
    Let $(t_0,x_0)\in \mathbb{R} \times \mathbb{R}^n$ and $R>0$. If $u$ is a nonnegative local weak solution of $\mathcal{H}u=0$ in $C_R(t_0,x_0)$, then 
    \begin{equation*}
        \sup_{C^-_R(t_0,x_0)} u   \leq C \inf_{C^+_R(t_0,x_0)} u,
    \end{equation*}
    where $C=C([ \omega  ]_{A_2},n,M,\nu)>0$ is a constant.
\end{lem}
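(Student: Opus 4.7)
The plan is to adapt Moser's classical parabolic Harnack scheme to the degenerate setting, exploiting the $A_2$-condition on $\omega$ (which yields the doubling property \eqref{DoublingMuck} and a weighted Sobolev embedding, e.g.\ \cite[Thm.~15.26]{heinonen2018nonlinear}), the realness of the coefficients (so that powers of $u$ may be used as admissible test functions and the quadratic form $A\nabla_x u \cdot \nabla_x u$ is nonnegative), and the energy identity from Proposition \ref{prop:energyboundedinterval}. Throughout, one works with $v := u + \varepsilon$ for arbitrary $\varepsilon > 0$ and sends $\varepsilon \to 0^+$ at the end to bypass degeneracies where $u$ vanishes.

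\textbf{Step 1 (Moser iteration for $v^p$).} Testing the weak formulation against $\zeta^2 v^{2p-1}$, with $\zeta$ a parabolic cut-off and $p \in \mathbb{R} \setminus \{0, 1/2\}$, and using \eqref{ellipticité A} together with Young's inequality for absorption, one obtains the Caccioppoli-type estimate
\begin{equation*}
 \sup_{t} \int \zeta^2 v^{2p} \,\mathrm{d}\omega \;+\; \iint |\nabla_x(\zeta v^p)|^2 \,\mathrm{d}\mu \;\le\; \frac{C}{(p - 1/2)^2} \iint \bigl(|\partial_t \zeta| + |\nabla_x \zeta|^2\bigr) v^{2p} \,\mathrm{d}\mu.
\end{equation*}
Coupling this with the weighted Sobolev–Poincar\'e embedding on $\mu$-balls yields a reverse-H\"older self-improvement for $v^p$. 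Iterating over shrinking cylinders (in the same spirit as Lemma \ref{lem: Moser cas reel}) produces, for some fixed $p_0 > 0$,
\begin{equation*}
 \sup_{C_R^-(t_0,x_0)} v \;\le\; C\Bigl(\tfrac{1}{\mu(\widetilde{C}_R)}\int_{\widetilde{C}_R} v^{p_0} \,\mathrm{d}\mu\Bigr)^{1/p_0}, \qquad \Bigl(\tfrac{1}{\mu(\widetilde{C}_R)}\int_{\widetilde{C}_R} v^{-p_0} \,\mathrm{d}\mu\Bigr)^{-1/p_0} \;\le\; C \inf_{C_R^+(t_0,x_0)} v,
\end{equation*}
on a slightly enlarged intermediate cylinder $\widetilde{C}_R \subset C_R(t_0,x_0)$.

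\textbf{Step 2 ($\log v$ estimate).} Testing the equation against $\zeta^2 v^{-1}$, real ellipticity \eqref{ellipticité A} ensures that the quadratic form in $\nabla_x \log v$ appearing on the left-hand side is coercive, and Proposition \ref{prop:energyboundedinterval} controls the time-derivative contribution. After absorption one gets
\begin{equation*}
 \iint \zeta^2 |\nabla_x \log v|^2 \,\mathrm{d}\mu \;\le\; C \iint \bigl(|\nabla_x \zeta|^2 + |\partial_t \zeta|\bigr) \,\mathrm{d}\mu.
\end{equation*}
Combined with the $(1,2)$-Poincar\'e inequality available in the $A_2$-doubling setting, this shows that $\log v$ has bounded parabolic oscillation on $\widetilde{C}_R$, with constants depending only on the structural data $[\omega]_{A_2}$, $n$, $M$, $\nu$.

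\textbf{Step 3 (Crossover and main obstacle).} A Bombieri–Giusti-type lemma, or equivalently a parabolic John–Nirenberg inequality on the doubling metric-measure space $(\mathbb{R}^{1+n}, d_{\mathrm{par}}, \mu)$, synthesises Steps 1 and 2 to produce a small $p_0 > 0$ such that
\begin{equation*}
 \Bigl(\tfrac{1}{\mu(\widetilde{C}_R)}\int_{\widetilde{C}_R} v^{p_0} \,\mathrm{d}\mu\Bigr) \Bigl(\tfrac{1}{\mu(\widetilde{C}_R)}\int_{\widetilde{C}_R} v^{-p_0} \,\mathrm{d}\mu\Bigr) \;\le\; C.
\end{equation*}
Chained with the two inequalities of Step 1, this yields $\sup_{C_R^-} v \le C \inf_{C_R^+} v$, and letting $\varepsilon \to 0^+$ concludes. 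The main obstacle is precisely this crossover: the parabolic John–Nirenberg inequality in the $\mu$-weighted setting is not entirely routine, and its cleanest proof requires a Calder\'on–Zygmund-type decomposition adapted both to parabolic cylinders and to the non-product measure $\mathrm{d}\mu = \omega(x)\,\mathrm{d}x\,\mathrm{d}t$. The Bombieri–Giusti route, which deduces the crossover directly from the sup and log bounds without explicitly invoking BMO theory, is typically the most economical alternative, and is the one I would pursue.
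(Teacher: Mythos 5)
The paper does not actually prove this lemma: it is quoted from the literature, with the proof deferred to \cite[Thm.~A]{ishige1999behavior} (which in turn follows the Chiarenza--Serapioni / Guti\'errez--Wheeden adaptation of Moser's scheme to $A_2$-degenerate parabolic equations). Your outline is precisely the strategy of that cited proof --- positive/negative power iteration, logarithmic estimate, crossover --- so there is no conflict of method. The ingredients you invoke are indeed available in this setting: the weighted Sobolev embedding of \cite[Thm.~15.26]{heinonen2018nonlinear}, the $(1,2)$-Poincar\'e inequality for $A_2$ weights (Fabes--Kenig--Serapioni), and the doubling property \eqref{DoublingMuck}, which is what makes the abstract Bombieri--Giusti lemma applicable on the parabolic metric-measure space associated with $\mathrm{d}\mu$.

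That said, as submitted your argument is a plan rather than a proof: Step 3, which you yourself identify as ``the main obstacle,'' is exactly the step that carries all the difficulty in the weighted setting, and you do not carry it out --- you only indicate which of two known routes you \emph{would} pursue. Until the crossover $\bigl(\fint v^{p_0}\,\mathrm{d}\mu\bigr)\bigl(\fint v^{-p_0}\,\mathrm{d}\mu\bigr)\le C$ is established (either via the parabolic John--Nirenberg inequality for $\mathrm{d}\mu$ with the time-lag, or via the measure-theoretic Bombieri--Giusti lemma applied to the family of shrinking cylinders), Steps 1 and 2 do not combine into the Harnack inequality. Two smaller points you should also make explicit if you write this up in full: the test function $\zeta^2 v^{2p-1}$ is not directly admissible (one needs Steklov averaging or the energy identity of Proposition \ref{prop:energyboundedinterval} localized in space to justify the integration by parts in time), and the blow-up of your Caccioppoli constant at $p=1/2$ must be avoided by choosing the iteration exponents to stay uniformly away from $1/2$. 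Both are standard, but the crossover is not, and leaving it as a declared intention leaves a genuine gap in the proof.
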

\begin{cor}
    Let $\mathfrak{T}\in \mathbb{R}$ and $\mathcal{O} \subset \mathbb{R}^n$ an open set. Let $u$ be a nonnegative local weak solution of $\mathcal{H}u=0$ on $\Omega= (\mathfrak{T},\infty)\times \mathcal{O}$ and $\mathcal{O'}\subset \mathcal{O}$ a convex open set with $\delta:=\mathrm{dist}(\mathcal{O'},\partial \mathcal{O})>0$. Then, for all $t>s>\mathfrak{T}$ and $x,y \in \mathcal{O'}$, one has
    \begin{equation*}
        u(s,y) \leq u(t,x)  e^{C\left ( \frac{\left | x-y\right |^2}{t-s}+\frac{4}{\delta^2}(t-s)+\frac{3(t-s)}{2(s-\mathfrak{T})}+1 \right )},
    \end{equation*}
    with $C=C([ \omega  ]_{A_2},n,M,\nu)>0$ is a constant. If $\mathfrak{T}=-\infty$, then for all $t>s$ and $x,y \in \mathcal{O'}$, one has
    \begin{equation}\label{199.5}
        u(s,y) \leq u(t,x)  e^{C\left ( \frac{\left | x-y\right |^2}{t-s}+\frac{4}{\delta^2}(t-s)+1 \right )}.
    \end{equation}
    Finally, if $\mathcal{O}=\mathbb{R}^n$ and $-\infty<\mathfrak{T}$, then for all $t>s>\mathfrak{T}$ and $x,y \in \mathbb{R}^n$, one has
    \begin{equation}\label{200}
        u(s,y) \leq u(t,x)  e^{C ( \frac{\left | x-y\right |^2}{t-s}+\frac{3(t-s)}{2(s-\mathfrak{T})}+1  )}.
    \end{equation}
\end{cor}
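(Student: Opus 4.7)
The plan is to build a classical parabolic Harnack chain connecting $(s,y)$ to $(t,x)$ and iterate Lemma \ref{lem:Harnack} along it. I will describe the strategy only for the main estimate; the two special cases \eqref{199.5} and \eqref{200} follow by dropping, respectively, the temporal and the spatial obstruction from the choice of chain length below.

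First, I fix an integer $N\ge 1$ to be determined, set $\tau:=(t-s)/N$ and $R:=\sqrt{\tau}$, and introduce intermediate times $s_k:=s+k\tau$ and intermediate points $y_k:=y+(k/N)(x-y)$ for $k=0,\ldots,N$, so that $(s_0,y_0)=(s,y)$ and $(s_N,y_N)=(t,x)$. Each $y_k$ lies in $\mathcal{O}'$ by convexity. For each step $k\in\{1,\ldots,N\}$, I center a Harnack cylinder at $(t_0^{(k)},x_0^{(k)}):=\bigl((s_{k-1}+s_k)/2,(y_{k-1}+y_k)/2\bigr)$ of radius $R$. Then $s_{k-1}-t_0^{(k)}=-R^2/2$ and $s_k-t_0^{(k)}=R^2/2$ land squarely inside the time-intervals defining $C_R^-$ and $C_R^+$, respectively, and the spatial offsets satisfy $|y_{k-1}-x_0^{(k)}|=|y_k-x_0^{(k)}|=|x-y|/(2N)$, so the endpoints belong to $C_R^\mp(t_0^{(k)},x_0^{(k)})$ as soon as $N>|x-y|^2/(4(t-s))$.

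Next I enforce $C_R(t_0^{(k)},x_0^{(k)})\subset\Omega$. Spatially, $x_0^{(k)}\in\mathcal{O}'$, so $B(x_0^{(k)},2R)\subset\mathcal{O}$ holds as soon as $2R<\delta$, i.e.\ $N>4(t-s)/\delta^2$. Temporally, the tightest instance is $k=1$ with $t_0^{(1)}=s+R^2/2$, so $t_0^{(1)}-R^2>\mathfrak{T}$ becomes $R^2<2(s-\mathfrak{T})$, i.e.\ $N>(t-s)/(2(s-\mathfrak{T}))$. I therefore pick
\[
N:=\left\lceil \frac{|x-y|^2}{4(t-s)}+\frac{4(t-s)}{\delta^2}+\frac{t-s}{2(s-\mathfrak{T})}+1\right\rceil,
\]
which satisfies all three conditions simultaneously. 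Applying Lemma \ref{lem:Harnack} at each step yields $u(s_{k-1},y_{k-1})\le C_H\, u(s_k,y_k)$, where $C_H=C([\omega]_{A_2},n,M,\nu)$ is the Harnack constant, and iterating over $k$ gives $u(s,y)\le C_H^{N}\, u(t,x)=e^{N\log C_H}\,u(t,x)$. Inserting the bound on $N$ and absorbing the universal numerical factors $\log C_H$, $1/4$, $4$, $3/2$, etc., into a single constant $C$ produces the stated inequality. The cases $\mathfrak{T}=-\infty$ and $\mathcal{O}=\mathbb{R}^n$ are recovered by simply omitting the corresponding term in the definition of $N$.

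The only real difficulty is bookkeeping: one has to select a single integer $N$ that simultaneously respects the rigid $1/4$--$3/4$ geometry of $C_R^\pm$, keeps every spatial cylinder inside $\mathcal{O}$, and keeps every temporal cylinder to the right of $\mathfrak{T}$, while producing an exponent that lines up with the prescribed linear combination of $|x-y|^2/(t-s)$, $(t-s)/\delta^2$, and $(t-s)/(s-\mathfrak{T})$. Once $N$ is chosen as above, the remaining iteration and constant tracking are routine.
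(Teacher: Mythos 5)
Your proposal is correct and is essentially the paper's own argument: a Harnack chain along the straight segment from $(s,y)$ to $(t,x)$ with the step number $N$ chosen to satisfy simultaneously the spatial-step, domain-width, and initial-time constraints, followed by iteration of Lemma \ref{lem:Harnack}. The only (harmless) difference is that you center each cylinder at the space-time midpoint of consecutive chain points, which even relaxes the temporal constraint to $N>\tfrac{t-s}{2(s-\mathfrak{T})}$ versus the paper's $\tfrac{3(t-s)}{2(s-\mathfrak{T})}$, and the resulting exponent is still dominated by the stated one.
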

\begin{proof}
    We follow \cite[Theorem 5]{aronson1967local}. By Lemma \ref{lem:Harnack}, if $z \in \mathcal{O}$ and $R>0$ with $B(z,2R) \subset \mathcal{O}$, then 
    \begin{equation}\label{Harnack à itérer}
        u(\tau,z) \leq C u(\tau+R^2,z'), \ \text{for all} \ z' \in B(z,R) \ \text{and} \ \tau \in \mathbb{R} \ \text{with} \ \mathfrak{T}<\tau-\frac{3}{2}R^2. 
    \end{equation}
    We fix $t>s>\mathfrak{T}$ and $x, y \in \mathcal{O'}$. Let $N\ge1$ be the integer verifying 
    $$\frac{\left | x-y\right |^2}{t-s}+\frac{4}{\delta^2}(t-s)+\frac{3(t-s)}{2(s-\mathfrak{T})}\le N <  \frac{\left | x-y\right |^2}{t-s}+\frac{4}{\delta^2}(t-s)+\frac{3(t-s)}{2(s-\mathfrak{T})}+1.$$
    We set $R:= \sqrt{\frac{t-s}{N}}$. We have $\frac{\left | x-y\right |}{N}  \le R \le \delta/2$ and $ \mathfrak{T}<s-\frac{3}{2}R^2 $. Now, we connect $(s,y)$ and $(t,x)$ in $(\mathfrak{T},\infty)\times \mathcal{O'}$ by setting, for all $0 \le i \le N$,
    $$ \tau_i:= s+ \frac{i}{N}(t-s) \ \ \text{and} \ \ z_i:= y+\frac{i}{N}(x-y) \in \mathcal{O'} .$$
    Using \eqref{Harnack à itérer}, we have, for all $0 \le i \le N-1$, $u(\tau_i,z_i) \leq C u(\tau_{i+1},z_{i+1})$. Thus, by iterating, we get 
    $$ u(s,y)=u(\tau_0,z_0) \leq C^{N} u(\tau_N,z_N) = e^{N \log(C)} u(t,x) \leq u(t,x) e^{\log(C)\left ( \frac{\left | x-y\right |^2}{t-s}+\frac{4}{\delta^2}(t-s)+\frac{3(t-s)}{2(s-\mathfrak{T})}+1 \right )}. $$
    Finally, the cases where \(\mathfrak{T} = -\infty\) and \(\mathcal{O} = \mathbb{R}^n\) follow by setting \(\mathfrak{T} = -\infty\) and \(\delta = \infty\), respectively.
\end{proof}

\begin{rem}

  Combining the Harnack inequality in Lemma \eqref{lem:Harnack} with the Gaussian bound \eqref{1} and an argument due to Trudinger \cite[Thm. 2.2]{trudinger1968pointwise}, one gets the following estimates on the generalized fundamental solution.
\begin{equation*}
    \left | \Gamma(t,x+h;s,y) -\Gamma(t,x;s,y)\right | \leq \frac{K_0}{\sqrt{\omega_{t-s}(x)}\sqrt{\omega_{t-s}(y)}}\left ( \frac{\left | h \right |}{(t-s)^{1/2}+\left | x-y \right |} \right )^\delta  e^{-k_0 \frac{\left | x-y \right |^2}{t-s}},
\end{equation*}
\begin{equation*}
    \left | \Gamma(t,x;s,y+h) -\Gamma(t,x;s,y)\right | \leq \frac{K_0}{\sqrt{\omega_{t-s}(x)}\sqrt{\omega_{t-s}(y)}}\left ( \frac{\left | h \right |}{(t-s)^{1/2}+\left | x-y \right |} \right )^\delta  e^{-k_0 \frac{\left | x-y \right |^2}{t-s}},
\end{equation*}
for some $\delta>0$ depending only on the structural constants, all $t>s$ and almost every $ x,y,h \in \mathbb{R}^n$ such that $2\left | h \right |\leq (t-s)^{1/2}+\left | x-y \right |$. For proofs and more details, we refer to \cite{ataei2024fundamental} and \cite{trudinger1968pointwise}.
\end{rem}
\begin{rem}\label{rem:Holder régularité}
    If \( u \) is a nonnegative local weak solution to \( \mathcal{H}u = 0 \) on an open set \( \Omega= I \times \mathcal{O}\), then \( u \) is locally Hölder continuous on \( \Omega \). This regularity result is established in \cite[Thm. B]{ishige1999behavior} by disregarding the assumption (A5), as there are no lower-order terms in our case.
\end{rem}

We note the following lemma, which is the first step toward proving Gaussian lower bounds for the generalized fundamental solution.
\begin{lem}\label{lem:AronsonSerrin}
    Let $\mathfrak{T} \in \mathbb{R}$. Let $u$ be a nonnegative local weak solution of $\mathcal{H}u=0$ on $(\mathfrak{T},\infty)\times \mathbb{R}^n$. Let $y \in \mathbb{R}^n$. Assume that there exist \(\gamma > 0\) and a family of positive real numbers \((\alpha_{s,t}(y))_{t>s>\mathfrak{T}}\) verifying
    $$ \mathcal{M}:= \inf_{t>s>\mathfrak{T}} \frac{1}{\alpha_{s,t}(y)} \int_{B(y,\sqrt{\gamma(t-s)})} u(t,x)\ \mathrm{d}\omega(x)\ > 0.$$
    Then, there are two constants $c=c([ \omega  ]_{A_2},n,M,\nu)>0$ and $C=C(\gamma,[ \omega  ]_{A_2},n,M,\nu)>0$ such that
    \begin{equation*}
         C \mathcal{M} \frac{e^{-c (\frac{|x-y|^2}{t-s}+\frac{t-s}{s-\mathfrak{T}}})}{\omega_{\gamma(t-s)}(y)} \alpha_{s,s+\frac{t-s}{2}}(y) \leq u(t,x),
    \end{equation*}
    for all $t>s>\mathfrak{T}$ and all $x \in \mathbb{R}^n$.
\end{lem}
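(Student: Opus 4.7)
The plan is to apply the parabolic Harnack chaining inequality \eqref{200} to transfer information about the integral at an intermediate time back to a pointwise lower bound on $u(t,x)$, and then use the integral hypothesis to close the estimate. Since $u$ is nonnegative and solves $\mathcal{H}u=0$ on $(\mathfrak{T},\infty)\times \mathbb{R}^n$, Remark \ref{rem:Holder régularité} guarantees that $u$ is locally Hölder continuous, so pointwise values are well-defined and all of the following manipulations make sense.

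Fix $t>s>\mathfrak{T}$ and $x\in \mathbb{R}^n$. The key choice is the intermediate time $\tau:=s+\frac{t-s}{2}$, which satisfies $\tau-\mathfrak{T}\ge s-\mathfrak{T}$ and $t-\tau=\frac{t-s}{2}$. For any $z$ in the ball $B(y,\sqrt{\gamma(t-s)/2})$, apply \eqref{200} with $(\tau,z)$ playing the role of the ``past'' point and $(t,x)$ the ``future'' point:
\begin{equation*}
u(\tau,z) \le u(t,x)\, \exp\!\Bigl(C\bigl(\tfrac{|x-z|^2}{t-\tau}+\tfrac{3(t-\tau)}{2(\tau-\mathfrak{T})}+1\bigr)\Bigr).
\end{equation*}
Routine estimates give $|x-z|^2\le 2|x-y|^2+\gamma(t-s)$, hence $\frac{|x-z|^2}{t-\tau}\le \frac{4|x-y|^2}{t-s}+2\gamma$, and $\frac{t-\tau}{\tau-\mathfrak{T}}\le \frac{t-s}{2(s-\mathfrak{T})}$. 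Absorbing the constant $2\gamma+1$ into the exponential, we obtain a bound of the form
\begin{equation*}
u(\tau,z)\le u(t,x)\,\exp\!\Bigl(c\bigl(\tfrac{|x-y|^2}{t-s}+\tfrac{t-s}{s-\mathfrak{T}}\bigr)\Bigr)\cdot C^{-1},
\end{equation*}
with constants $c,C$ depending only on $\gamma$ and the structural constants.

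Now integrate the inequality with respect to $\mathrm{d}\omega(z)$ on $B(y,\sqrt{\gamma(t-s)/2})$. On the right-hand side, the only $z$-dependence was in the Harnack factor, which has now been dominated by a $z$-independent bound; the integral thus produces a factor $\omega(B(y,\sqrt{\gamma(t-s)/2}))\le \omega_{\gamma(t-s)}(y)$. On the left-hand side, the hypothesis applied at $(s,\tau)$ (noting $\sqrt{\gamma(\tau-s)}=\sqrt{\gamma(t-s)/2}$) gives the lower bound $\mathcal{M}\,\alpha_{s,s+\frac{t-s}{2}}(y)$. Rearranging yields precisely the claimed inequality.

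The argument is essentially a direct adaptation of Aronson--Serrin; the only minor subtlety is the choice of the intermediate time $\tau$, which must be tuned so that $t-\tau$ matches the radius $\sqrt{\gamma(\tau-s)}$ appearing in the hypothesis while simultaneously allowing the term $\frac{t-\tau}{\tau-\mathfrak{T}}$ in the Harnack chaining to be controlled by $\frac{t-s}{s-\mathfrak{T}}$. No genuine obstacle arises beyond bookkeeping; the doubling property \eqref{DoublingMuck} and the pointwise regularity from Remark \ref{rem:Holder régularité} handle the remaining technicalities.
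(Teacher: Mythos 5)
Your proof is correct and follows essentially the same route as the paper: both hinge on applying the chained Harnack inequality \eqref{200} to compare $u$ at the intermediate time $s+\frac{t-s}{2}$ on the ball $B(y,\sqrt{\gamma(t-s)/2})$ with $u(t,x)$, and then invoking the hypothesis for the pair $(s,s+\frac{t-s}{2})$. Your variant of integrating the pointwise Harnack bound over the ball (rather than passing through a mean-value point $\tilde z$ and a second application of \eqref{200} via $u(s+\tfrac{2}{3}(t-s),y)$, as the paper does) is a slight streamlining that even dispenses with the doubling comparison \eqref{301}.
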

\begin{proof}
    We take inspiration from \cite[Theorem 7']{aronson1967local}. We fix $s,t \in \mathbb{R}$ with $t>s>\mathfrak{T}$. For $z \in B(y,\sqrt{\gamma(t-s)})$, the inequality \eqref{200} implies that 
    \begin{equation}\label{201}
        u(s+\frac{t-s}{2},z) \leq u(s+\frac{2}{3}(t-s),y) e^{C(6\gamma+\frac{1}{6}\frac{t-s}{s-\mathfrak{T}}+1)}.
    \end{equation}
    Using the continuity of $u(s+\frac{1}{2}(t-s),\cdot)$ on $B(y,\sqrt{\frac{\gamma}{2}(t-s)}  )$ (see Remark \ref{rem:Holder régularité}), we can find $\Tilde{z} \in B(y,\sqrt{\frac{\gamma}{2}(t-s)})$ such that 
    \begin{align*}
        u(s+\frac{1}{2}(t-s),\Tilde{z})&= \frac{1}{\omega_{\frac{\gamma}{2}(t-s)}(y)} \int_{B(y,\sqrt{\frac{\gamma}{2}(t-s)})} u(s+\frac{1}{2}(t-s),l) \ \mathrm{d}\omega(l)
        \\&= \frac{1}{\omega_{\frac{\gamma}{2}(t-s)}(y)} \int_{B(y,\sqrt{\gamma((s+\frac{t-s}{2})-s)})} u(s+\frac{1}{2}(t-s),l) \ \mathrm{d}\omega(l).
    \end{align*}
    Thus, by taking $z=\Tilde{z}$ in \eqref{201}, we get
    \begin{equation}\label{202}
       \mathcal{M} \frac{\alpha_{s,s+\frac{t-s}{2}}(y)}{\omega_{\frac{\gamma}{2}(t-s)}(y)} \leq u(s+\frac{2}{3}(t-s),y) e^{C(6\gamma+\frac{1}{6}\frac{t-s}{s-\mathfrak{T}}+1)}.
    \end{equation}
    We fix $x\in \mathbb{R}^n$. Using \eqref{200} again, we have 
    \begin{equation}\label{203}
        u(s+\frac{2}{3}(t-s),y) \leq u(t,x) e^{C(\frac{3|x-y|^2}{t-s}+\frac{1}{3}\frac{t-s}{s-\mathfrak{T}}+1)}.
    \end{equation}
    Combining \eqref{202} and \eqref{203}, we get $$ e^{-C(6\gamma+\frac{1}{6}\frac{t-s}{s-\mathfrak{T}}+1)} \mathcal{M} \frac{e^{-C(\frac{3|x-y|^2}{t-s}+\frac{1}{3}\frac{t-s}{s-\mathfrak{T}}+1)}}{\omega_{\frac{\gamma}{2}(t-s)}(y)} \alpha_{s,s+\frac{1}{2}(t-s)}(y) \leq u(t,x).$$
    Finally, by \eqref{MuckProportion}, we have $$ \frac{2^{n\eta}}{\beta} \leq \frac{\omega_{\gamma(t-s)}(y)}{\omega_{\frac{\gamma}{2}(t-s)}(y)}. $$
    Thus, as desired, we obtain $$\Tilde{C} \mathcal{M} \frac{e^{-\Tilde{c} (\frac{|x-y|^2}{t-s}+\frac{t-s}{s-\mathfrak{T}}})}{\omega_{\gamma(t-s)}(y)} \alpha_{s,s+\frac{t-s}{2}}(y) \leq u(t,x),$$
    with $\Tilde{c}=\Tilde{c}([ \omega  ]_{A_2},n,M,\nu)>0$ and $\Tilde{C}=\Tilde{C}(\gamma,[ \omega  ]_{A_2},n,M,\nu)>0$.
\end{proof}
The following lemma is the final step toward proving Gaussian lower bounds for the generalized fundamental solution, and it is interesting in its own right.
\begin{lem}\label{lem:GLB-SFG}
    There exist two constants $C=C([ \omega  ]_{A_2},n,M,\nu)>0$ and $c=c([ \omega  ]_{A_2},n,M,\nu)>0$ such that
    \begin{equation*}
        \inf_{s<t} \int_{B(y,\sqrt{\gamma(t-s)})} \Gamma(t,x;s,y) \ \mathrm{d}\omega(x) \ \ge C e^{-\frac{c}{\gamma}},
    \end{equation*}
    for all $y\in\mathbb{R}^n$ and $\gamma>0$.
\end{lem}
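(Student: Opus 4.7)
The plan combines the conservation property $\int_{\mathbb{R}^n}\Gamma(t,x;s,y)\,\mathrm{d}\omega(x)=1$, which follows from $\tilde\Gamma(s,t)1=1$ of Proposition~\ref{prop:consproperty} together with the adjointness of Proposition~\ref{prop:solfundgeneralisée}(1), with the Gaussian upper bound and the Harnack inequality \eqref{200}. The conceptual difficulty is that conservation plus the upper bound only secures mass on a ball of the fixed large scale $\sqrt{t-s}$, whereas the target $B(y,\sqrt{\gamma(t-s)})$ is much smaller when $\gamma$ is small; a midpoint Harnack step is what compresses that mass onto the smaller ball, losing only a polynomial factor in $\gamma$ that is easily absorbed by $e^{-c/\gamma}$.

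\textbf{Step 1.} I would first establish
$$\int_{B(y,R_0\sqrt{t-s})}\Gamma(t,x;s,y)\,\mathrm{d}\omega(x)\ \geq\ \tfrac{1}{2}\qquad(\ast)$$
for some $R_0$ depending only on the structural constants. Using Lemma~\ref{lem:Cruz-Uribe et Rios} to bound the kernel by $(\tilde K_0/\omega_{t-s}(y))\,e^{-(k_0/2)|x-y|^2/(t-s)}$, then splitting $\{|x-y|\geq R\sqrt{t-s}\}$ into dyadic annuli and applying the doubling property~\eqref{DoublingMuck}, one gets a tail bound of the order $CR^{\log_2 D}e^{-k_0 R^2/2}$ for $R\geq 1$. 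This is below $1/2$ once $R=R_0$ is chosen large, and subtracting from the conservation identity yields $(\ast)$.

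\textbf{Step 2.} If $\gamma\geq R_0^2$ then $(\ast)$ already gives the conclusion. Otherwise I apply $(\ast)$ at the midpoint $r=(s+t)/2$ to obtain $\int_{B(y,R_0\sqrt{(t-s)/2})}\Gamma(r,z;s,y)\,\mathrm{d}\omega(z)\geq 1/2$. Since $(\tau,w)\mapsto\Gamma(\tau,w;s,y)$ is a nonnegative local weak solution of $\mathcal{H}u=0$ on $(s,\infty)\times\mathbb{R}^n$ (by Chapman--Kolmogorov it equals $\Gamma(\tau,r)$ applied to $\Gamma(r,\cdot;s,y)\in L^2_\omega(\mathbb{R}^n)$, the latter being integrable thanks to the Gaussian bound), the Harnack estimate~\eqref{200} with $\mathfrak{T}=s$ applies and yields, for any $z\in B(y,R_0\sqrt{(t-s)/2})$ and $x\in B(y,\sqrt{\gamma(t-s)})$ with $\gamma<R_0^2$,
$$\Gamma(r,z;s,y)\ \leq\ \Gamma(t,x;s,y)\,e^{\kappa_0},$$
where $\kappa_0$ is a structural constant because $(t-r)/(r-s)=1$ and $|x-z|^2/(t-r)\leq 8R_0^2$ are bounded independently of $\gamma$. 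Integrating this pointwise inequality in $x$ over $B(y,\sqrt{\gamma(t-s)})$ and in $z$ over $B(y,R_0\sqrt{(t-s)/2})$, and inserting the midpoint mass estimate, produces
$$\int_{B(y,\sqrt{\gamma(t-s)})}\Gamma(t,x;s,y)\,\mathrm{d}\omega(x)\ \geq\ \frac{e^{-\kappa_0}}{2}\cdot\frac{\omega(B(y,\sqrt{\gamma(t-s)}))}{\omega(B(y,R_0\sqrt{(t-s)/2}))}.$$

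\textbf{Step 3.} The lower bound in~\eqref{MuckProportion} controls the weighted ratio below by $\beta^{-1}(2\gamma/R_0^2)^{n/(4\eta)}$, so the previous inequality reads $\int\geq c_\ast\gamma^{a}$ with $a=n/(4\eta)$ whenever $\gamma<R_0^2$. Combined with $(\ast)$ for $\gamma\geq R_0^2$, we get $\int\geq\min(c_\ast\gamma^a,1/2)$ for every $\gamma>0$. An elementary optimization shows that $\gamma\mapsto\gamma^a e^{c/\gamma}$ attains a strictly positive minimum on $(0,\infty)$ (namely at $\gamma=c/a$), which implies $\min(c_\ast\gamma^a,1/2)\geq Ce^{-c/\gamma}$ for suitable structural constants $C,c>0$, and concludes the proof.
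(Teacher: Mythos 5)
Your argument is correct, but it is genuinely different from the one in the paper. The paper works on the \emph{backward} solution $v(\sigma,z)=(\Tilde{\Gamma}(\sigma,t)\mathbb{1}_{B(y,\sqrt{\gamma(t-s)})})(z)$: it extends $v$ by the constant $1$ (and the coefficients by the identity matrix) for $\sigma>t$ on the cylinder $\mathbb{R}\times B(y,\sqrt{\gamma(t-s)})$, observes that the extension is a nonnegative weak solution of a backward parabolic equation there, and applies the local Harnack chain \eqref{199.5} with $\delta=\tfrac12\sqrt{\gamma(t-s)}$ across the terminal time to get $1=\Tilde{v}(t,y)\le v(s,y)\,e^{C(16/\gamma+1)}$. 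That route needs neither the conservation property nor the Gaussian upper bound; the ``initial mass'' is free because the terminal datum equals $1$ at the center of the ball, and the $e^{-c/\gamma}$ comes from the $\frac{4}{\delta^2}(t-s)$ term in the Harnack chain. You instead work on the forward solution $x\mapsto\Gamma(t,x;s,y)$, extract mass $\ge 1/2$ on a ball of the fixed scale $R_0\sqrt{t-s}$ from conservation (Proposition~\ref{prop:consproperty} plus adjointness, which indeed gives $\int\Gamma(t,x;s,y)\,\mathrm{d}\omega(x)=1$) together with the tail estimate from Lemma~\ref{lem:Cruz-Uribe et Rios} and doubling, and then compress that mass with a single midpoint Harnack step \eqref{200}, paying only a ratio of weighted ball volumes controlled by \eqref{MuckProportion}. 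Your method uses more machinery but yields the quantitatively stronger bound $\min(c_*\gamma^{n/(4\eta)},1/2)$, which dominates $Ce^{-c/\gamma}$ as $\gamma\to0$; for the application in Proposition~\ref{prop:GLB} ($\gamma$ fixed) this makes no difference. Two cosmetic points: in Step~3 the containment $B(y,\sqrt{\gamma(t-s)})\subset B(y,R_0\sqrt{(t-s)/2})$ needed for \eqref{MuckProportion} requires $\gamma\le R_0^2/2$ rather than $\gamma<R_0^2$ (adjust the case split, or note that otherwise the ratio is $\ge1$); and since conservation holds for a.e.\ $y$, the conclusion for every $y$ is recovered, as in the paper, through the continuous representative furnished by Remark~\ref{rem:Holder régularité}.
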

\begin{proof}
    We follow \cite{aronson1967bounds}. We fix $s,t\in \mathbb{R}$ with $s<t$ and $y\in  \mathbb{R}^n$. For all $\sigma <t$ and $z\in \mathbb{R}^n$, we set 
    $$ v(\sigma,z):= \int_{B(y,\sqrt{\gamma(t-s)})} \Gamma(t,x;\sigma,z) \ \mathrm{d}\omega(x)= \left ( \Tilde{\Gamma}(\sigma,t) \mathbb{1}_{B(y,\sqrt{\gamma(t-s)})} \right )(z). $$
    By definition, $v$ is the unique element in \( L^1_{\mathrm{loc}}((-\infty,t); H^1_\omega(\mathbb{R}^n)) \) such that \( \int_{-\infty}^t \|\nabla_x v(s)\|^2_{2,\omega} \, \mathrm{d}s < \infty \), which is a weak solution to the Cauchy problem
    \[
     \left\{
     \begin{array}{ll}
    \mathcal{H}^\star v = 0 & \text{in } \mathcal{D}'((-\infty,t)\times \mathbb{R}^n), \\
    v(\tau) \to \mathbb{1}_{B(y, \sqrt{\gamma(t-s)})} & \text{in } \mathcal{D'}(\mathbb{R}^n) \text{ as } \tau \to t^-.
    \end{array}
    \right.
    \] 
For all $(\sigma,z)\in \mathbb{R}\times \mathbb{R}^n$, we set 
 \begin{align*}
 \Tilde{A}(\sigma,z)  =\left\{\begin{matrix}
 A^T(\sigma,z) \ \mathrm{if} \ \sigma\leq t, \\ 
 \ I_n \ \ \ \ \  \   \ \  \mathrm{if} \ \sigma> t ,
 \\ 
 \end{matrix}\right. 
\ \ \ \text{and} \ \ 
\Tilde{v}(\sigma,z)    =\left\{\begin{matrix}
v(\sigma,z)  \ \mathrm{if} \ \sigma\leq t, \\ 
 \ 1\ \ \ \ \     \  \mathrm{if} \ \sigma> t ,
 \\ 
 \end{matrix}\right.  
 \end{align*}
 where $A^T$ is the transpose of matrix $A$. By using the $L^2_\omega(\mathbb{R}^n)$-valued continuity of $v$, we see that $\Tilde{v}$ is a nonnegative weak solution to the equation 
 $$ \partial_\sigma \Tilde{v}+\omega^{-1} \mathrm{div}_x( \Tilde{A}(\sigma,\cdot)\nabla_x \Tilde{v})=0 \ \ \text{in} \ \mathcal{D'}(\mathbb{R}\times B(y,\sqrt{\gamma(t-s)})). $$
 Using \eqref{199.5} with $\delta=\frac{1}{2}\sqrt{\gamma(t-s)}$, we obtain
 $$\Tilde{v}(t,y) \leq \Tilde{v}(s,y) e^{C(\frac{16}{\gamma}+1)}.$$
 As $\Tilde{v}(t,y)=1$ and $\Tilde{v}(s,y)=v(s,y)$, we conclude that
 $$ v(s,y)= \int_{B(y,\sqrt{\gamma(t-s)})} \Gamma(t,x;s,y) \ \mathrm{d}\omega(x) \geq e^{-C(\frac{16}{\gamma}+1)}.$$

\end{proof}
We are now ready to derive Gaussian lower bounds for the generalized fundamental solution.
\begin{prop}[Gaussian lower bounds]\label{prop:GLB}
There exist two constants $C=C([ \omega  ]_{A_2},n,M,\nu)>0$ and $c=c([ \omega  ]_{A_2},n,M,\nu)>0$ such that 
\begin{equation}\label{GLB}
     \frac{C}{\omega_{t-s}(y)} e^{-c \frac{\left | x-y \right |^2}{t-s}} \leq \Gamma(t,x;s,y),
\end{equation}
for all $t>s$ and for all $(x,y) \in \mathbb{R}^{2n}$.
\end{prop}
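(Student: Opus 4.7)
The plan is to apply Lemma \ref{lem:AronsonSerrin} to the function $u(\tau, z) := \Gamma(\tau, z; s, y)$, which is a nonnegative local weak solution of $\mathcal{H}u = 0$ on $(s, \infty) \times \mathbb{R}^n$ (for fixed source $(s,y)$), taking $\mathfrak{T} = s$ and choosing the center point in that lemma to be the source point $y$, with some parameter $\gamma > 0$ to be fixed. The conclusion of the lemma will then produce a Gaussian-type lower bound for $\Gamma(t, x; s, y)$ once the intermediate time $s'$ is specialized to the midpoint $s' = s + (t-s)/2$, at which point all ratios $(t-s')/(s'-s)$ and $(s'-s)/(t-s')$ collapse to $1$ and $\omega_{\gamma(t-s)/2}(y)$ becomes comparable to $\omega_{t-s}(y)$ via the doubling property \eqref{DoublingMuck}.

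The heart of the argument is verifying the hypothesis of Lemma \ref{lem:AronsonSerrin}, namely exhibiting a family $(\alpha_{s',t'}(y))_{t'>s'>s}$ and a positive constant $\mathcal{M}$ satisfying
$$\int_{B(y, \sqrt{\gamma(t'-s')})} \Gamma(t', x; s, y)\, \mathrm{d}\omega(x) \geq \mathcal{M}\, \alpha_{s', t'}(y) \quad \text{for all } s < s' < t'.$$
To produce this, I will invoke the Chapman–Kolmogorov identity from Proposition \ref{prop:solfundgeneralisée} to factorize
$$\Gamma(t', x; s, y) = \int_{\mathbb{R}^n} \Gamma(t', x; s', z) \, \Gamma(s', z; s, y)\, \mathrm{d}\omega(z),$$
restrict the $z$-integration to $B(y, \tfrac{1}{2}\sqrt{\gamma(t'-s')})$, and use the elementary inclusion $B(z, \tfrac{1}{2}\sqrt{\gamma(t'-s')}) \subset B(y, \sqrt{\gamma(t'-s')})$ on that restricted domain. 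The inner $x$-integral is then bounded below by Lemma \ref{lem:GLB-SFG} applied at source $(s', z)$ with parameter $\gamma/4$, yielding a factor $\gtrsim e^{-4c/\gamma}$. A second application of Lemma \ref{lem:GLB-SFG}, this time at source $(s, y)$ with the rescaled parameter $\tilde\gamma := \gamma(t'-s')/(4(s'-s))$ chosen so that $\sqrt{\tilde\gamma(s'-s)} = \tfrac{1}{2}\sqrt{\gamma(t'-s')}$, bounds the remaining $z$-integral below by $\gtrsim e^{-4c(s'-s)/(\gamma(t'-s'))}$. The hypothesis thus holds with $\alpha_{s',t'}(y) := e^{-4c(s'-s)/(\gamma(t'-s'))}$ and $\mathcal{M}$ a positive structural constant.

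Feeding this into Lemma \ref{lem:AronsonSerrin} yields, for every $s < s' < t$, a lower bound of the form
$$\Gamma(t, x; s, y) \geq \frac{C_1}{\omega_{\gamma(t-s')}(y)} \exp\!\left(-c_1 \frac{|x-y|^2}{t-s'} - c_1 \frac{t-s'}{s'-s} - c_2 \frac{s'-s}{t-s'}\right),$$
since the factor $\alpha_{s', s'+(t-s')/2}(y)$ appearing in the conclusion contributes an additional exponential $e^{-8c(s'-s)/(\gamma(t-s'))}$. Choosing $s' = s + (t-s)/2$ makes $t-s' = s'-s = (t-s)/2$, so both time ratios equal $1$ while $|x-y|^2/(t-s') = 2|x-y|^2/(t-s)$; the weight $\omega_{\gamma(t-s)/2}(y)$ is then comparable to $\omega_{t-s}(y)$ via \eqref{DoublingMuck} with a constant depending only on $\gamma$ and $D$. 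Absorbing all numerical factors into the final constants $C, c > 0$ produces the desired estimate \eqref{GLB}.

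The main obstacle will be the factorization step. One must check that the Chapman–Kolmogorov integration can be swapped with the $x$-integration (legitimate by nonnegativity of $\Gamma$ and Fubini), that the triangle inequality really yields the inclusion $B(z, \tfrac{1}{2}\sqrt{\gamma(t'-s')}) \subset B(y, \sqrt{\gamma(t'-s')})$ on the restricted domain $z \in B(y, \tfrac{1}{2}\sqrt{\gamma(t'-s')})$, and that the rescaling $\tilde\gamma = \gamma(t'-s')/(4(s'-s))$ correctly tailors the second appeal to Lemma \ref{lem:GLB-SFG} so that the two Gaussian-type factors combine into a bona fide $\alpha_{s',t'}(y)$ that survives the midpoint specialization. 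Once this bookkeeping is in place, the midpoint choice and the doubling-based comparison of weights are entirely mechanical.
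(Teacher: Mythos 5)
Your overall strategy coincides with the paper's: both proofs run Lemma \ref{lem:AronsonSerrin} and Lemma \ref{lem:GLB-SFG} in tandem, specialize the intermediate time to the midpoint $s'=s+\frac{t-s}{2}$ so that the ratios $\frac{t-s'}{s'-s}$ collapse, and finish with the doubling property to replace $\omega_{\gamma(t-s)/2}(y)$ by $\omega_{t-s}(y)$. Your verification of the hypothesis of Lemma \ref{lem:AronsonSerrin} via Chapman--Kolmogorov and two applications of Lemma \ref{lem:GLB-SFG} is quantitatively correct: the inner integral over $B\bigl(z,\tfrac12\sqrt{\gamma(t'-s')}\bigr)$ does give a uniform factor $Ce^{-4c/\gamma}$, the outer one gives $Ce^{-4c(s'-s)/(\gamma(t'-s'))}$, and the resulting $\alpha_{s',t'}(y)$ indeed reduces to a structural constant at the midpoint. (The paper's own verification is lighter: it needs only one application of Lemma \ref{lem:GLB-SFG} and no Chapman--Kolmogorov, because its $\alpha_{s',t'}(y)$ is the $L^1_\omega$ mass of a test function on a shrinking ball.)

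The genuine gap is your very first step: you apply Lemma \ref{lem:AronsonSerrin} to $u(\tau,z):=\Gamma(\tau,z;s,y)$, asserting that this is a nonnegative local weak solution of $\mathcal{H}u=0$ on $(s,\infty)\times\mathbb{R}^n$. Nothing in the paper establishes this. The kernel is produced by the Dunford--Pettis theorem as an a.e.-defined function of $(x,y)$ for each fixed pair $t>s$; it is not shown to lie in $L^1_{\mathrm{loc}}((s,\infty);H^1_{\omega,\mathrm{loc}})$ with locally square-integrable gradient in $(\tau,z)$ for fixed $y$, and Lemma \ref{lem:AronsonSerrin} moreover needs pointwise values and continuity (through the Harnack chains and Remark \ref{rem:Holder régularité}), which are only available once one knows $u$ is a local weak solution. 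This is precisely the difficulty the paper's proof is engineered to avoid: it applies Lemma \ref{lem:AronsonSerrin} to $\Gamma(\cdot,s)\psi$ for a nonnegative $\psi\in\mathcal{D}(\mathbb{R}^n)$ with $\psi(y)=1$ --- a bona fide weak solution by Theorem \ref{thm:FundSol} --- and only recovers the kernel bound at the very end by Lebesgue differentiation in $\psi$. Your gap is repairable, e.g.\ by writing $\Gamma(\tau,\cdot;s,y)=\Gamma(\tau,r)\bigl[\Gamma(r,\cdot;s,y)\bigr]$ for $r\in(s,\tau)$, noting that $\Gamma(r,\cdot;s,y)\in L^2_\omega(\mathbb{R}^n)$ by the Gaussian upper bound, and invoking Theorem \ref{thm:FundSol}~A); but this works only for almost every $y$ (the $y$ for which the kernel identities hold), and it must be said --- as written, the step is an unproved assertion, not a consequence of the results you cite.
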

\begin{rem}
    The factor $\frac{1}{\omega_{t-s}(y) }$ appearing in \eqref{GLB} above may be replaced by one of 
    \begin{equation*}
        \frac{1}{\omega_{t-s}(x) }, \ \ \frac{1}{\sqrt{\omega_{t-s}(x)}\sqrt{\omega_{t-s}(y)}}, \ \ \frac{1}{\min(\omega_{t-s}(x) ,\omega_{t-s}(y))},
    \end{equation*}
    and the constants $c$ and $C$ in \eqref{GLB} are replaced by $2c$ and $\Tilde{C}=\Tilde{C}(C,c,D)>0$, respectively. For the proof, see the proof of Lemma \ref{lem:Cruz-Uribe et Rios}.
\end{rem}
\begin{proof}[Proof of Proposition \ref{prop:GLB}]
    We first claim that there exists a constant $C=C([ \omega  ]_{A_2},n,M,\nu)>0$ such that 
    \begin{equation}\label{claim}
        \mathcal{M} := \inf_{t>s} \frac{1}{\ \ \ \left\|  \psi \right\|_{L^1_\omega(B(y,\frac{1}{2}\sqrt{t-s})) \ \ }} \int_{B(y,\sqrt{t-s})} (\Gamma(t,s)\psi)(x)\ \mathrm{d}\omega(x)\ \ge C ,
    \end{equation}
    for all $y \in \mathbb{R}^n$ and all $\psi \in \mathcal{D}(\mathbb{R}^n)$ nonnegative with $\psi(y)=1$. As $\Gamma(\cdot,s)\psi$, for all $s\in \mathbb{R}$ and $\psi \in L^2_\omega(\mathbb{R}^n)$, is a weak solution to the equation $\mathcal{H}u=0$ on $(s,\infty)\times \mathbb{R}^n$, then \eqref{claim} and Lemma \ref{lem:AronsonSerrin} imply that, for all $s\in \mathbb{R}$, $y \in \mathbb{R}^n$ and $\psi \in \mathcal{D}(\mathbb{R}^n)$ nonnegative with $\psi(y)=1$,
    \begin{equation*}
         C \left\|  \psi \right\|_{L^1_\omega(B(y,\frac{1}{2}\sqrt{\frac{t-s'}{2}}))} \frac{e^{-c (\frac{|x-y|^2}{t-s'}+\frac{t-s'}{s'-s})}}{\omega_{t-s'}(y)} \leq (\Gamma(t,s)\psi)(x)= \int_{\mathbb{R}^n} \Gamma(t,x;s,z)\psi(z) \ \mathrm{d}\omega(z),
    \end{equation*}
    for all $t>s \in \mathbb{R}$, $s'\in (s,t)$ and for all $x \in \mathbb{R}^n$. Taking $s'=\frac{t+s}{2}$, we use the Lebesgue differentiation theorem to deduce that 
    \begin{equation}\label{300}
         C \frac{e^{-c \frac{|x-y|^2}{t-s}}}{\omega_{\frac{t-s}{2}}(y)} \leq \Gamma(t,x;s,y),
    \end{equation}
    for all $t>s$ and for all $(x,y) \in \mathbb{R}^{2n}$. Finally, we use \eqref{MuckProportion} to write 
    \begin{equation}\label{301}
        \frac{2^{n\eta}}{\beta} \frac{1}{\omega_{(t-s)}(y)} \leq \frac{1}{\omega_{\frac{1}{2}(t-s)}(y)},
    \end{equation}
    for all $y \in \mathbb{R}^n$. The Gaussian lower bound \eqref{GLB} follows from \eqref{300} and \eqref{301}. 
    
    It remains to prove the claim \eqref{claim}. For $s<t$, $\psi \in \mathcal{D}(\mathbb{R}^n)$ nonnegative with $\psi(y)=1$ and $y \in \mathbb{R}^n$, we have by Fubini's theorem 
    \begin{align*}
        \int_{B(y,\sqrt{t-s})} (\Gamma(t,s)\psi)(x)\ \mathrm{d}\omega(x)&= \int_{B(y,\sqrt{t-s})} \int_{\mathbb{R}^n}\Gamma(t,x;s,z)\psi(z)\ \mathrm{d}\omega(z)\mathrm{d}\omega(x) 
        \\&= \int_{\mathbb{R}^n} \psi(z) \int_{B(y,\sqrt{t-s})} \Gamma(t,x;s,z) \ \mathrm{d}\omega(x)\mathrm{d}\omega(z).
    \end{align*}
    In particular,
    \begin{align*}
        \int_{B(y,\sqrt{t-s})} (\Gamma(t,s)\psi)(x)\ \mathrm{d}\omega(x)&\ge  \int_{B(y,\frac{1}{2}\sqrt{t-s})} \psi(z)\left (  \int_{B(y,\sqrt{t-s})} \Gamma(t,x;s,z) \ \mathrm{d}\omega(x) \right )\mathrm{d}\omega(z)
        \\& \ge \int_{B(y,\frac{1}{2}\sqrt{t-s})} \psi(z)\left (  \int_{B(z,\frac{1}{2}\sqrt{t-s})} \Gamma(t,x;s,z) \ \mathrm{d}\omega(x) \right )\mathrm{d}\omega(z),
    \end{align*}
    as $B(z,\frac{1}{2}\sqrt{t-s}) \subset B(y,\sqrt{t-s})$ for all $z \in B(y,\frac{1}{2}\sqrt{t-s})$.
    Using Lemma \ref{lem:GLB-SFG} with $\gamma=\frac{1}{\sqrt{2}}$, we deduce that 
    \begin{equation*}
        \int_{B(y,\sqrt{t-s})} (\Gamma(t,s)\psi)(x)\ \mathrm{d}\omega(x) \ge C e^{-\sqrt{2} c} \left\|  \psi \right\|_{L^1_\omega(B(y,\frac{1}{2}\sqrt{t-s}))},
    \end{equation*}
    and the claim is proved.
    
\end{proof}

\appendix

\section{Proof of \texorpdfstring{\eqref{sup pour sol faibles}}{Equation (sup pour sol faibles)}}\label{annexe}
The equality \eqref{sup pour sol faibles} follows from this more general lemma.
\begin{lem}\label{Lemma sup supess = supess}
    Let $I \subset \mathbb{R}$ be an interval and $\mathcal{O} \subset \mathbb{R}^n$ an open set and $\mathrm{d}m$ a nonnegative and bounded Borel measure on $\mathcal{O}$. Set $\Omega:= I \times \mathcal{O}$ endowed with the product measure $\mathrm{d}t \otimes \mathrm{d}m$. Then, for any $u \in C(I;L^2(\mathcal{O},\mathrm{d}m))$, we have 
    \begin{equation*}
        M_1:=\sup_{t \in I} \ \supess_{\mathcal{O}} \left|u(t,\cdot) \right|  = \supess_{\Omega} \left|u \right|=:M_2.
    \end{equation*}
\end{lem}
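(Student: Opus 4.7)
The plan is to prove the two inequalities $M_2 \leq M_1$ (easy) and $M_1 \leq M_2$ (the only place where the continuity hypothesis is used) separately. Throughout, I fix a jointly $(dt \otimes dm)$-measurable representative of $u$ on $\Omega$, which is standard for continuous $L^2$-valued functions (via Pettis measurability or by approximation with simple functions).

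For $M_2 \leq M_1$, I will observe that by definition of the slice essential supremum, $|u(t,x)| \leq M_1$ holds for $m$-a.e.\ $x \in \mathcal{O}$ for every $t \in I$. A direct application of Tonelli's theorem to the indicator $\mathbb{1}_{\{|u|>M_1\}}$, all of whose $t$-slices are $m$-null, shows that the exceptional set is $(dt \otimes dm)$-null on $\Omega$, yielding $M_2 \leq M_1$. If $M_1 = \infty$ there is nothing to prove.

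For the reverse inequality $M_1 \leq M_2$, assuming $M_2 < \infty$ (otherwise trivial), Tonelli in the other direction shows that the set
\[
B := \bigl\{t \in I : |u(t,x)| \leq M_2 \text{ for } m\text{-a.e.\ } x \in \mathcal{O}\bigr\}
\]
has full Lebesgue measure in $I$, hence is dense. The core step is to upgrade the bound $\supess_{\mathcal{O}} |u(t,\cdot)| \leq M_2$ from "Lebesgue-a.e.\ $t$" to "every $t$". For this I will fix an arbitrary $t_0 \in I$, choose a sequence $(t_n) \subset B$ with $t_n \to t_0$, and use $L^2$-continuity to get $u(t_n,\cdot) \to u(t_0,\cdot)$ in $L^2(\mathcal{O},dm)$. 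Extracting a subsequence with pointwise $m$-a.e.\ convergence, I pass to the limit in the inequality $|u(t_n,x)| \leq M_2$ (valid $m$-a.e.\ for each $n \in \mathbb{N}$) to conclude $|u(t_0,x)| \leq M_2$ for $m$-a.e.\ $x$, and hence $\supess_{\mathcal{O}} |u(t_0,\cdot)| \leq M_2$. Taking the supremum over $t_0 \in I$ gives $M_1 \leq M_2$.

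The main (mild) obstacle is precisely this continuity-based extension from "a.e.\ $t$" to "every $t$", which is the only place where the hypothesis $u \in C(I; L^2(\mathcal{O},dm))$ intervenes; without it one could modify $u(\cdot,x)$ on a Lebesgue-null subset of $I$ and blow up the pointwise-in-$t$ essential supremum on $\mathcal{O}$ while keeping the product essential supremum unchanged.
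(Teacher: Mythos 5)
Your proof is correct, and for the nontrivial inequality $M_1\le M_2$ it takes a genuinely different route from the paper. The easy direction $M_2\le M_1$ is essentially the same in both arguments (Fubini--Tonelli applied to the exceptional set versus to the super-level set $\{|u|>\lambda\}$; yours is the contrapositive formulation). For $M_1\le M_2$, the paper introduces the auxiliary function $f(t)=\int_{\mathcal O}(|u(t,x)|-\lambda)_+\,\mathrm dm(x)$, proves it is continuous on $I$ via $|b_+-a_+|\le|b-a|$, Cauchy--Schwarz and the bound $m(\mathcal O)<\infty$, and then deduces $\int_I f>0$ from $f(t_1)>0$ at a single point; you instead use that the set $B$ of good times has full Lebesgue measure, hence is dense, and upgrade the bound to every $t_0\in I$ by extracting an $m$-a.e.\ convergent subsequence from $u(t_n,\cdot)\to u(t_0,\cdot)$ in $L^2$. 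Your argument is slightly more robust: it uses the finiteness of $m$ only through $\sigma$-finiteness (needed for Tonelli), whereas the paper's continuity estimate for $f$ uses $\sqrt{m(\mathcal O)}$ quantitatively; on the other hand the paper's argument is self-contained at the level of a single integral identity and avoids subsequence extraction. One small point to tighten: having fixed a jointly measurable representative $\tilde u$, its slice $\tilde u(t,\cdot)$ agrees with the $L^2$-class $u(t,\cdot)$ only for Lebesgue-a.e.\ $t$, so in defining $B$ you should intersect with that full-measure set of agreement before invoking $L^2$-continuity along $t_n\to t_0$; since the intersection of two full-measure subsets of $I$ is still dense, the argument goes through unchanged. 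The same representative issue is implicit (and likewise harmless) in the paper's Fubini step.
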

\begin{proof}
Let us first prove that $M_2 \leq M_1$. Fix $\lambda < M_2$. Then, there exists a  Borel set $E_\lambda \subset Q$ such that $(\mathrm{d}t \otimes \mathrm{d}m)(E_\lambda)>0$ and for all $(t,x) \in E_\lambda$, $\left|u(t,x) \right| > \lambda.$ By Fubini's theorem $$(\mathrm{d}t \otimes \mathrm{d}m)(E_\lambda)= \int_I \mathrm{d}m (E_\lambda^t)\mathrm{d}t >0 \ \ \mathrm{with} \ E_\lambda^t := \left\{x \in \mathcal{O} : (t,x) \in E_\lambda \right\}.$$
In particular, there is  $t_0 \in I$ such that $\mathrm{d}m ({E^{t_0}_{\lambda}} ) >0$. Then, $$\lambda \leq \supess_{\mathcal{O}} \left|u(t_0,\cdot) \right| \leq \sup_{I} \ \supess_{\mathcal{O}} \left|u(t,\cdot) \right|=M_1.$$
This is true for all $\lambda < M_2$, therefore $M_2 \leq M_1$. For the converse, using the standard notation $a_+=\max(a,0)$,  for all $a,b\in \mathbb{R}$, we have
\begin{equation}\label{ITR}
    \left|b_+-a_+ \right|\leq \left|b-a \right|.
\end{equation}
We set for all $t\in I$, $f(t):= \int_\mathcal{O} (\left|u(t,x) \right|-\lambda )_+ \mathrm{d}m(x)$. The function $f$ is continuous on $I$. In fact, using \eqref{ITR}, reverse triangular inequality and Cauchy-Schwarz inequality implies that for all $t_1,t_2 \in I$,
\begin{equation*}
    \left|f(t_2)-f(t_1) \right| \leq \sqrt{\mathrm{d}m(\mathcal{O})} \left\|u(t_2,\cdot)-u(t_1,\cdot) \right\|_{L^2(\mathcal{O},\mathrm{d}m)}.
\end{equation*}
Let $\lambda < M_1$. There exists a $t_1 \in I$ such that $\lambda < \supess_{\mathcal{O}} \left|u(t_1,\cdot) \right| $, which is equivalent to the fact that $\int_\mathcal{O} (\left|u(t_1,x) \right|-\lambda )_+ \mathrm{d}m(x) >0$, \textit{i.e.} $f(t_1)>0$. Therefore, by continuity of $f$,
\begin{equation*}
    0 < \int_I f(t) \mathrm{d}t = \iint_{\Omega} (\left|u(t,x) \right|-\lambda )_+ \ (\mathrm{d}t \otimes\mathrm{d}m)(t,x).
\end{equation*}
We deduce that  one has $\left|u(t,x) \right| > \lambda $ on a set of positive $\mathrm{d}t \otimes\mathrm{d}m$ measure. Hence, $M_2 \geq \lambda$. This is true for all $\lambda < M_1$, therefore $M_1\le M_2$.
\end{proof}

\subsubsection*{\textbf{Copyright}}
A CC-BY 4.0 \url{https://creativecommons.org/licenses/by/4.0/} public copyright license has been applied by the authors to the present document and will be applied to all subsequent versions up to the Author Accepted Manuscript arising from this submission.

\bibliographystyle{alpha}
\bibliography{references.bib}

\end{document}